\documentclass[11pt,a4paper,reqno]{amsart}

\usepackage{amsmath}
\usepackage{amssymb}
\usepackage{amsthm}
\usepackage[shortlabels]{enumitem}
\usepackage[british]{babel}
\usepackage{microtype}
\usepackage{tikz}
\usepackage{xcolor}
\usepackage{hyperref}

\calclayout
\setlist[enumerate]{leftmargin=2.2em,itemsep=2pt,topsep=4pt}
\hypersetup{
 colorlinks=true,
 linkcolor=blue!45!black,
 citecolor=blue!45!black,
 urlcolor=blue!45!black,
 pdftitle={A fast dynamo with zero topological entropy},
 pdfauthor={Lukas Niebel},
}

\theoremstyle{plain}
\newtheorem{theorem}{Theorem}[section]
\newtheorem{lemma}[theorem]{Lemma}
\newtheorem{proposition}[theorem]{Proposition}
\newtheorem{corollary}[theorem]{Corollary}
\newtheorem{conjecture}[theorem]{Conjecture}
\theoremstyle{remark}
\newtheorem{remark}[theorem]{Remark}
\numberwithin{equation}{section}

\newcommand{\T}{\mathbb{T}}
\newcommand{\R}{\mathbb{R}}
\newcommand{\CC}{\mathbb{C}}
\newcommand{\Z}{\mathbb{Z}}
\newcommand{\Lp}{\mathrm{L}}
\newcommand{\C}{\mathrm{C}}
\newcommand{\Hs}{\mathrm{H}}
\newcommand{\W}{\mathrm{W}}
\newcommand{\cG}{\mathcal{G}}
\newcommand{\cI}{\mathcal{I}}
\newcommand{\cL}{\mathcal{L}}

\newcommand{\cR}{\mathcal{R}}

\newcommand{\eps}{\varepsilon}
\newcommand{\dd}{\,\mathrm{d}}
\DeclareMathOperator{\supp}{supp}
\DeclareMathOperator{\spec}{spec}

\DeclareMathOperator{\curl}{curl}
\DeclareMathOperator{\Div}{div}
\DeclareMathOperator{\re}{Re}
\DeclareMathOperator{\im}{Im}
\newcommand{\id}{\mathrm{Id}}

\newcommand{\e}{\mathrm{e}}
\newcommand{\one}{\mathbf{1}}
\newcommand{\inner}[2]{\left\langle#1,#2\right\rangle}
\newcommand{\norm}[1]{\left\|#1\right\|}
\newcommand{\abs}[1]{\left\lvert#1\right\rvert}
\newcommand{\jump}[1]{\left[\!\left[#1\right]\!\right]}

\title[A fast dynamo with zero topological entropy]{A fast dynamo with zero topological entropy}
\author{Lukas Niebel}
\address[Lukas Niebel]{ETH Z\"urich, Department of Mathematics, R\"amistrasse 101, 8092 Z\"urich, Switzerland.}
\email{lukas.niebel@math.ethz.ch}
\date{\today}

\subjclass[2020]{76W05 (Primary) 35P05, 47A10, 35B25, 37B40 (Secondary)}
\keywords{fast dynamo, kinematic induction equation, Ponomarenko dynamo, vanishing magnetic diffusivity, spectral instability, topological entropy}

\begin{document}

\begin{abstract}
  We construct an autonomous Lipschitz fast dynamo on the flat three-torus
  whose particle flow has zero topological entropy and whose ideal induction
  equation has no exponential growth. For every sufficiently small positive
  magnetic diffusivity, the induction operator has an eigenvalue with real
  part bounded below by a positive constant independent of the diffusivity.
  A corresponding non-zero real-valued magnetic field satisfies an exact
  exponential growth law in $\mathrm{L}^2$. For the same velocity field, every
  ideal solution grows at most linearly in $\mathrm{L}^2$. Moreover, the particle
  flow and its inverse have Lipschitz constants growing at most linearly
  in time. The velocity field is real-valued, divergence-free, differentiable
  everywhere and smooth away from one circle, but is not $\mathrm{C}^1$.
\end{abstract}

\maketitle

\section{Introduction and main result}
\label{sec:introduction}

\subsection{The induction equation and the fast-dynamo problem}
We work on the flat unit torus
$\T^3=\R^3/\Z^3 $, equipped with normalised Lebesgue measure, and write
$x=(y,x_3) $ with $y=(x_1,x_2)\in\T^2 $. We call the
$y $-directions horizontal and the $x_3 $-direction axial, and write
$e_3=(0,0,1) $. The subscript $\perp $ denotes horizontal components
or differential operators. We write
$\norm{\cdot}_2 $ for the $\Lp^2 $-norm. Given a divergence-free
velocity field $u $ and a magnetic diffusivity $\eps>0 $, the magnetic
field $B $ solves
\begin{equation}
  \begin{cases}
    \partial_tB=\curl(u\times B)+\eps\Delta B, \\
    \Div B=0,                                  \\
    B|_{t=0}=B_{\rm in}.
  \end{cases}
  \label{eq:induction}
\end{equation}
This is the \emph{kinematic} dynamo problem. The vector identity
\[
  \curl(u\times B)
  =u\,\Div B-B\,\Div u+(B\cdot\nabla)u-(u\cdot\nabla)B
\]
shows that, if $u $ and $B $ are divergence-free, the first equation is
equivalent to
\[
  \partial_tB+(u\cdot\nabla)B-(B\cdot\nabla)u=\eps\Delta B.
\]
The transport term moves the magnetic field, the stretching term
$(B\cdot\nabla)u $ may amplify it, and diffusion damps the small scales
created by stretching. Formally, their competition is expressed by the
energy identity
\begin{equation}
  \frac12\frac{\dd}{\dd t}\norm{B(t)}_2^2
  =
  \int_{\T^3}B\cdot(B\cdot\nabla)u\dd x
  -\eps\norm{\nabla B}_2^2
  \label{eq:energy-competition}
\end{equation}
for real-valued magnetic fields.

For $\eps>0$ and a time-independent, divergence-free velocity field
$u\in\W^{1,\infty}(\T^3;\R^3)$, let
\begin{equation}
  L_{\eps,u}B
  :=
  \curl(u\times B)+\eps\Delta B
  \label{eq:physical-generator}
\end{equation}
on the complex Hilbert space
\[
  \Lp^2_\sigma(\T^3)
  :=
  \{B\in\Lp^2(\T^3;\CC^3):\Div B=0\},
\]
with domain
$\Hs^2(\T^3;\CC^3)\cap\Lp^2_\sigma(\T^3) $.
This operator generates an analytic $\C_0$-semigroup on
$\Lp^2_\sigma$. More properties of the semigroup are collected in
Appendix \ref{sec:semigroup-growth}.
All complex inner products are linear in their first argument.
We omit the domain from Sobolev spaces and norms whenever it is clear from
context.

The coefficient $\eps $ is the magnetic diffusion coefficient. For a fixed
magnetic permeability, it is inversely proportional to electrical
conductivity. The limit $\eps\downarrow0 $ is the high-conductivity regime. We call the problem
\emph{ideal} when $\eps=0 $ and \emph{resistive} if $\eps>0 $.

The key question on slow and fast dynamos, introduced by Vainshtein
and Zel'dovich \cite{VZ72}, asks whether the exponential
growth rate survives this high-conductivity limit. For a fixed autonomous velocity field, we define the
maximal growth rate by
\begin{equation}
  \gamma(u,\eps)
  :=
  \lim_{t\to\infty}\frac1t
  \log\norm{\e^{tL_{\eps,u}}}_{\Lp^2_\sigma\to\Lp^2_\sigma}.
  \label{eq:semigroup-growth-rate}
\end{equation}
The limit exists because the logarithm of the semigroup norm is
subadditive and every $\C_0 $-semigroup is locally bounded in time
\cite[Proposition~IV.2.2]{EngelNagel}.
This quantity measures only the long-time exponential growth and ignores what happens for small times.

We call $u $ a dynamo at diffusivity $\eps $ when
$\gamma(u,\eps)>0 $. It is a \emph{fast dynamo} if
\[
  \liminf_{\eps\downarrow0}\gamma(u,\eps)>0.
\]
If $\gamma(u,\eps)>0 $ for every sufficiently small $\eps $, but
$\gamma(u,\eps)\to0 $, we call it a \emph{slow dynamo}.

For every $\eps>0$, this growth rate is determined by the spectrum:
\[
  \gamma(u,\eps)
  =\sup\{\re\lambda:\lambda\in\spec(L_{\eps,u})\}.
\]
The supremum is attained at an eigenvalue;
see Lemma \ref{lem:semigroup-spectral-growth}.

Thus the fast-dynamo question for an autonomous velocity field is an
eigenvalue problem. On the flat three-torus, the conjecture recorded as
Problem~1994--28 in Arnold's collection \cite{Arnold04} has the following
form; see also \cite[Chapter~V]{AK98}. It was open when the first version
of this manuscript appeared and has since been resolved. We discuss the
subsequent result in Remark \ref{rem:subsequent-resolution}.

\begin{conjecture}[Smooth autonomous fast dynamo]
  \label{conj:smooth-fast-dynamo}
  There exists a single real-valued, time-independent velocity field
  \[
    u\in\C^\infty(\T^3;\R^3),\qquad \Div u=0,
  \]
  such that
  \[
    \liminf_{\eps\downarrow0}\gamma(u,\eps)>0.
  \]
\end{conjecture}

In other words, the conjecture asks for one smooth velocity field $u $ and constants
$\eps_0,\gamma_0>0 $, all chosen before the diffusivity, such that
$L_{\eps,u} $ has an eigenvalue in
$\{\re z\geq\gamma_0\} $ for every
$0<\eps\leq\eps_0 $. Here and below, a \emph{magnetic eigenmode} is a
non-zero divergence-free eigenfunction $V $ of the relevant induction
operator; if its eigenvalue is $\lambda $, it generates the complex solution
$t\mapsto\e^{t\lambda}V $. The eigenvalue and a corresponding magnetic
eigenmode may depend on $\eps $.

\subsection{Main result}

For a Lipschitz velocity field,
we denote its \emph{particle flow} by $\Phi_t$, where
\[
  \frac{\dd}{\dd t}\Phi_t(x)=u(\Phi_t(x)),
  \qquad \Phi_0(x)=x.
\]
We write $\operatorname{Lip}(F)$ for the optimal Lipschitz constant of a
map $F$ and $h_{\mathrm{top}}(F)$ for the topological entropy of a
continuous self-map of a compact metric space. We refer to
\cite[Section~2.5e]{HasselblattKatok02} for its definition.
Moreover, $\mathcal T_0(t)$
denotes the ideal induction solution group on $\Lp^2_\sigma(\T^3)$.

\begin{theorem}[A fast dynamo with zero topological entropy]\label{thm:main}
  There exist a real-valued autonomous velocity field
  \[
    u\in \W^{1,\infty}(\T^3;\R^3),\qquad \Div u=0,
  \]
  and constants $\eps_0>0$, $\gamma_0>0$, such that $\gamma(u,\eps)\geq\gamma_0$ for every $0<\eps<\eps_0$. In particular,
  \begin{equation}
    \liminf_{\eps\downarrow0}\gamma(u,\eps)\geq\gamma_0.
    \label{eq:main-growth-conclusion}
  \end{equation}
  More precisely, for every $0<\eps\leq\eps_0 $, there are
  \[
    K_\eps\in\Z\setminus\{0\},\qquad
    \lambda_\eps\in\CC,\qquad
    0\neq b^\eps\in\Hs^2(\T^2;\CC^3),
  \]
  such that the complex magnetic eigenmode
  \begin{equation}
    V^\eps(y,x_3):=\e^{2\pi iK_\eps x_3}b^\eps(y)
    \in\Hs^2(\T^3;\CC^3)\cap\Lp^2_\sigma(\T^3),
    \quad
    L_{\eps,u}V^\eps=\lambda_\eps V^\eps,
    \quad
    \re\lambda_\eps\geq\gamma_0.
    \label{eq:main-spectral-conclusion}
  \end{equation}
  Moreover, the real-valued datum
  \[
    B_{\rm in}^\eps:=\re V^\eps
    \in\Hs^2(\T^3;\R^3)\cap\Lp^2_\sigma(\T^3)
  \]
  is non-zero, and its solution satisfies
  \begin{equation}\label{eq:main-thm-growth}
    \norm{B^\eps(t)}_{\Lp^2(\T^3)}
    =
    \e^{t\re\lambda_\eps}\norm{B_{\rm in}^\eps}_{\Lp^2(\T^3)}
    \qquad\text{for every }t\geq0.
  \end{equation}

  Moreover, $h_{\mathrm{top}}(\Phi_t)=0$ for all $t\in\R$ and
  \[
    \lim_{t\to\infty}\frac1t
    \log\norm{\mathcal T_0(t)}_{\Lp^2_\sigma\to\Lp^2_\sigma}=0.
  \]
\end{theorem}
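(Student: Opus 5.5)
The proof will use a Ponomarenko-type velocity field: a single columnar vortex with an axial jet,
\[
  u(y,x_3)=r\Omega(r)\,e_\theta+W(r)\,e_3 ,
\]
written in polar coordinates $(r,\theta)$ about the centre of a disc $D\subset\T^2$. The profiles $\Omega$ and $W$ are chosen to be smooth inside and outside a circle $r=r_0$. Across that circle the first derivatives of $r\Omega$ and $W$ jump, but only in a way that leaves $u$ differentiable and Lipschitz; this gives the lack of $\C^1$ regularity at one circle stated in the abstract. Outside a slightly larger disc the field vanishes, so $u$ extends periodically to $\T^3$. The field is divergence-free automatically, since it depends only on $r$ and has no radial component.

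The plan has four steps.

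\textbf{Step 1 (reduction to ODEs).} Because $u$ is independent of $x_3$ and of $\theta$ near the vortex, the eigenproblem reduces to a one-dimensional problem. I will substitute the ansatz
\[
  V=\e^{2\pi iKx_3}\,b(y),\qquad b=\e^{im\theta}\beta(r)
\]
locally. Then $L_{\eps,u}V=\lambda V$ becomes a coupled radial ODE system for $(b_r,b_\theta)$, with $b_3$ recovered from the divergence constraint. The velocity shear enters only through the jumps of $\Omega'$ and $W'$ at $r_0$.

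\textbf{Step 2 (construction of the eigenfunction).} I will follow the classical Ponomarenko/Gilbert asymptotics. The wavenumbers are scaled as $K\sim\eps^{-1/3}$, with $m$ tuned so that the resonance condition $m\Omega(r_0)+2\pi KW(r_0)=0$ holds approximately at $r_0$. With this scaling the mode localises in a boundary layer of width $\eps^{1/3}$ around the circle. At leading order, the layer problem is a matching problem of Airy/parabolic type, and its dispersion relation has a root with positive real part of order one, independent of $\eps$. This root exists because the kink supplies an $O(1)$ stretching concentrated in an $O(\eps^{1/3})$ layer. To obtain an honest eigenvalue, I will solve the exact ODE on each side of $r_0$, using decaying solutions that I will transport to the periodic cell, and impose the jump conditions at $r_0$. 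This yields a dispersion function $D_\eps(\lambda)$, analytic in $\lambda$, that converges locally uniformly to the limiting dispersion function. Rouché's theorem then gives a zero $\lambda_\eps$ with $\re\lambda_\eps\ge\gamma_0$. Since $\theta$ is not a global coordinate on $\T^2$, I must produce a genuine $\Hs^2(\T^2)$ function. I will do this either by exploiting exponential decay away from $r_0$ and correcting by a small perturbation (a resolvent argument for $L_{\eps,u}$ restricted to fixed $K$), or, more cleanly, by working directly with the 2D operator at fixed $K$. In the second approach, the quasimode built from the layer solution gives a residual of size $\e^{-c/\eps^{1/3}}$, and a resolvent or spectral-projection argument shows there is a true eigenvalue nearby.

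\textbf{Step 3 (real solution and exact growth).} Here I take $B_{\rm in}=\re V^\eps$. The solution is $\re(\e^{t\lambda}V)$, since the operator is real. Its $\Lp^2$ norm equals $\e^{t\re\lambda}$ times the norm of a rotation of $\re V$ and $\im V$ in $x_3$. Because $K\neq0$, the cross terms $\int \e^{4\pi iKx_3}\dots$ integrate to zero, so the norm equals $\tfrac12\e^{t\re\lambda}\norm{b}_2$ exactly. The same computation also shows $B_{\rm in}\ne0$.

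\textbf{Step 4 (ideal side).} The flow $\Phi_t$ preserves each cylinder $\{r=c\}\times\T$ and acts on it as the translation $(\theta,x_3)\mapsto(\theta+t\Omega(r),\,x_3+tW(r))$. The flow is therefore integrable, and $h_{\mathrm{top}}=0$; one way to see this is that distances grow only polynomially, so Bowen's $(n,\delta)$-separated sets have polynomial cardinality. Differentiating in $r$ gives $\operatorname{Lip}(\Phi_t)\lesssim1+t$, using that $\Omega'$ and $W'$ are bounded. The Cauchy formula $B(t)=(D\Phi_t\,B_{\rm in})\circ\Phi_t^{-1}$ together with volume preservation gives $\norm{\mathcal T_0(t)}\le C(1+t)$, which yields zero exponential rate.

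The main obstacle is Step 2: producing an exact eigenvalue uniformly in $\eps$. This requires rigorous control of the layer asymptotics through the nonsmooth kink, together with the global compatibility with $\T^2$.
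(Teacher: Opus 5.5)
\textbf{The gap is in Step~2.} Your Steps~3 and~4 are sound and match what the paper does: Lemma~\ref{lem:real-axial-mode}, the explicit cell flow with $\operatorname{Lip}(\Phi_t)\le C(1+\abs t)$, and the Cauchy formula. One small correction there: the norm is $\e^{t\re\lambda}\norm b_2/\sqrt2$, not $\tfrac12\e^{t\re\lambda}\norm b_2$.

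Step~2, however, does not work. A single vortex of fixed radius with bounded $\nabla u$ does not give an $\eps$-independent growth rate, and your own scaling shows this.
\begin{itemize}
\item For $u=r\Omega e_\theta+We_3$ the radial field has no stretching source. Its only feedback is the diffusive curvature term $-2\eps r^{-2}\partial_\theta b_\theta$.
\item With $m\sim K\sim\eps^{-1/3}$, this feedback has size $\eps m/r_0^2\sim\eps^{2/3}$. Diffusion across a layer of width $\eps^{1/3}$ acts at rate $\eps^{1/3}$.
\item The rescaled layer problem is therefore $\eps$-independent in $\Lambda=\lambda/\eps^{1/3}$. An order-one root $\Lambda$ gives $\lambda_\eps=\eps^{1/3}\Lambda\to0$. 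This is exactly the slow smooth-Ponomarenko branch \cite{Gilbert88,RSS88,NV25}.
\item Heuristically, for any such vortex, balancing the loop gain $\gamma^2\lesssim\eps m\norm{\nabla u}_\infty/r_0^2$ against the damping $\eps m^2/r_0^2\lesssim\gamma$ caps $\gamma$ at order $\eps^{1/3}$.
\end{itemize}
The kink does not change this. A jump in $\nabla u$ produces no interface distribution in $(b\cdot\nabla_\perp)u_\perp$. So the matching conditions at $r_0$ are just continuity of $b$ and $\partial_rb$, and the shear acts only through bounded bulk coefficients, contrary to what you say in Step~1. Gilbert's order-one rate needs a jump in $u$ itself. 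That jump gives the singular stretching $-\Omega b_re_\theta\,\delta_{\{r=1\}}$ and the condition $\jump{B_\theta'}=\Omega B_r$ of Lemma~\ref{lem:jump-horizontal-mode}, and such a flow is not in $\W^{1,\infty}$. Incidentally, a derivative jump across $\{r=r_0\}$ also makes $u$ non-differentiable there. The circle in the abstract is the axial accumulation circle $\{p_\infty\}\times\T$, not a kink cylinder.

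\textbf{The missing idea.} With $\nabla u$ bounded, the vortex itself must have radius $\ell\asymp\sqrt\eps$, so that the curvature feedback $\eps/\ell^2$ is of order one in the angular mode $j=1$. A single velocity that works for all $\eps$ therefore needs rescaled copies $\ell_nv((y-p_n)/\ell_n)$ at all scales, with the intervals $\ell_n^2I$ covering $(0,\eps_0]$. This creates the real difficulty, which your plan never meets. All cells are coupled by diffusion in the same axial Fourier mode. Moreover, an approximate eigenfunction does not produce nearby spectrum for this non-self-adjoint operator, however small its residual, and this includes your $\e^{-c\eps^{-1/3}}$ quasimode. The paper handles this in three steps:
\begin{itemize}
\item it adds axial velocities and proves the weighted background resolvent bound of Proposition~\ref{prop:background-resolvent};
\item it glues that resolvent to the rescaled local resolvent on a fixed contour;
\item it shows in Lemma~\ref{lem:global-riesz-nonzero} that the resulting Riesz projection has a non-zero matrix element.
\end{itemize}
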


\begin{remark}
  \label{rem:subsequent-resolution}
  After the first version of the present manuscript appeared on
  arXiv, Conjecture \ref{conj:smooth-fast-dynamo} was resolved in
  \cite[Theorem~1.1]{CSV26autonomous}. Their construction gives a
  rigorous realisation of the stretch--fold--shear mechanism proposed
  in the physics literature \cite{BaylyChildress88,ChildressGilbert95}.
  They first construct smooth time-periodic fast dynamos and then realise
  the period dynamics in a smooth autonomous flow. In both settings,
  they isolate an exponentially growing distributional eigenmode of
  the ideal equation on an anisotropic Banach space and show that the
  spectral instability persists under the singular perturbation
  $\eps\Delta$; see \cite[Sections~1.1--1.4]{CSV26autonomous}.

  Let us compare the behaviour without diffusion. The smooth fast
  dynamos in \cite{CSV26autonomous} have positive topological entropy
  and positive exponential growth of the ideal solution group in
  $\Lp^2_\sigma$ operator norm. For the relation between entropy,
  particle stretching and ideal growth, we refer to
  \cite{KlapperYoung95}. For the velocity constructed here, the
  Lipschitz constants of the flow maps and the operator norm of the
  ideal solution group grow at most linearly; see Corollary \ref{cor:ideal-growth}. Thus, at Lipschitz
  regularity, fast dynamo action is compatible with zero entropy and
  zero ideal exponential growth.

  This raises the question whether Lipschitz regularity is optimal
  for fast dynamo action with zero topological entropy. In particular,
  does there exist a divergence-free autonomous velocity
  $u\in\C^1(\T^3;\R^3)$ which is a fast dynamo and whose time-one
  map satisfies $h_{\mathrm{top}}(\Phi_1)=0$?
\end{remark}

\begin{remark}
  The magnetic eigenmode $V^\eps $ is complex. If
  $\im\lambda_\eps\neq0 $, its real part is not itself a magnetic eigenmode,
  since it is not an eigenfunction of $L_{\eps,u} $. We obtain the
  real-valued growth law in equation \eqref{eq:main-thm-growth} by
  applying Lemma \ref{lem:real-axial-mode}.
\end{remark}

\begin{remark}
  \label{rem:scope-intro}
  The velocity and the positive lower bound in Theorem \ref{thm:main}
  are independent of $\eps$, whereas the magnetic eigenmode and the
  real initial datum may depend on $\eps$. We do not assert simplicity
  or continuity of $\lambda_\eps$, that it attains the spectral bound,
  or the existence of a common growing datum for all diffusivities.
  Since $K_\eps\neq0$, every magnetic field constructed here has zero
  spatial mean.
  The velocity is differentiable everywhere and smooth away from one axial
  circle. Its derivative vanishes on that circle but is discontinuous
  there, so the velocity is not $\C^1 $. We refer to
  Subsection \ref{sec:lipschitz-obstruction} for the details.
\end{remark}

\begin{remark}
  For a normalised velocity, put
  $M=\max\{1,\norm{u}_{\W^{1,\infty}}\} $ and
  $\widetilde u=u/M $. Then
  \[
    L_{\eps,\widetilde u}=M^{-1}L_{M\eps,u},
    \qquad
    \gamma(\widetilde u,\eps)=M^{-1}\gamma(u,M\eps).
  \]
  Hence Theorem \ref{thm:main} remains valid for $\widetilde u $, with
  threshold $\widetilde \eps = \eps_0/M $ and lower bound
  $\widetilde \gamma = \gamma_0/M $ on the growth rate. Moreover, the particle flow and ideal induction
  group for $\widetilde u $ are the corresponding objects for $u $
  evaluated at time $t/M $. This preserves the ideal growth rate and
  the vanishing of topological entropy. In particular, the velocity may be required to satisfy
  $\norm{\widetilde u}_{\W^{1,\infty}}\leq1 $ without changing any of the conclusions of Theorem \ref{thm:main}.
\end{remark}

\begin{remark}
  For each fixed $B_{\rm in}\in\Lp^2_\sigma$,
  $\e^{tL_{\eps,u}}B_{\rm in}$ converges strongly in $\Lp^2$ to
  $\mathcal T_0(t)B_{\rm in}$ as $\eps\downarrow0$, uniformly on
  bounded time intervals in $[0,\infty)$. This follows from the
  Trotter--Kato approximation theorem
  \cite[Theorem~III.4.8]{EngelNagel} and the Friedrichs commutator
  lemma \cite[Lemma~II.1]{DiPernaLions89}.
\end{remark}

\subsection{Construction and proof strategy}
\label{sec:ideas}

In this subsection, we outline the construction in Theorem \ref{thm:main}
at a formal level. Diffusion acts at a rate $\eps/\ell^2$ on fields
varying over a length $\ell$. Thus a positive-diffusivity instability
can remain effective as $\eps\downarrow0$ if the magnetic length
decreases like $\sqrt\eps$. We reproduce one such instability at all
small scales within one velocity field. The main difficulty is to
preserve the instability of the selected scale in the presence of all
the others.

\smallskip
\noindent\emph{The multiscale velocity.}
We begin with a smooth, compactly supported screw flow
$v\colon\R^2\to\R^3$, whose particles rotate and translate at fixed
radius. For angularly varying magnetic fields, diffusion couples the
radial and azimuthal components, while stretching supplies the energy;
compare \cite[Section~1.1]{NV25}. Starting from the Ponomarenko calculation
\cite{Ponomarenko73}, we regularise the discontinuous interface and
obtain an interval $I=[\delta_-,\delta_+]$ about $1$ and a fixed contour
$\Gamma \subset \CC$ enclosing unstable spectrum of the local horizontal operator
$\cL_\delta$ for every $\delta\in I$. The closed interior of $\Gamma$
lies strictly in the right half-plane of $\CC$. This local construction is fixed; see Proposition \ref{prop:local-seed}.

We choose integers $K_n$ growing geometrically, set
$\ell_n=K_n^{-1}$ and $R_n=\sqrt{\ell_n}$, and place the local copies
in disjoint cells $D(p_n,R_n)\subset\T^2$ accumulating at one
point. The velocity is given by
\[
  u(y,x_3)=\sum_{n\geq0}\left[
    \ell_n v\left(\frac{y-p_n}{\ell_n}\right)
    +\frac{\tau\ell_n}{2\pi}
    \vartheta\left(\frac{|y-p_n|}{R_n}\right)e_3
    \right],
\]
where $\vartheta$ is a smooth cut-off and $\tau>0$ will be chosen below.
The added axial velocity is constant near each core and is cut off close to
the cell boundary. Between the core, of scale $\ell_n$, and the
cut-off we put in a buffer of width comparable to $R_n$. In this buffer the
velocity is constant. Scaling amplitude and length of each local copy
by the same factor preserves the size of the effect of transport and stretching.
The resulting velocity is Lipschitz and differentiable everywhere, but
not $\C^1$: its derivative vanishes on the accumulation circle,
whereas the gradients at the cell centres do not decay.

Since $u$ is independent of $x_3$, we work in the axial Fourier mode
$\e^{2\pi iK_nx_3}(h,b_3)$. The horizontal equations are closed and
impose no divergence constraint on $h$; the axial component is recovered
from $b_3=-(2\pi iK_n)^{-1}\Div_\perp h$. In these equations, the
axial velocity enters only through the imaginary multiplier
$-2\pi iK_nu_3$. Extending the selected constant axial velocity to the
whole plane and using $Y=(y-p_n)/\ell_n$, we obtain the local operator
\[
  \cL_{\delta_n}-i\tau,
  \qquad \delta_n=\frac{\eps}{\ell_n^2},
  \qquad K_n\ell_n=1.
\]
There is no change of time scale, so the local growth rate is the same.
Following the interval assignment in \cite[Section~4.1]{CSV25}, we
choose the consecutive ratios $K_{n+1}/K_n$ uniformly greater than $1$
but sufficiently close to $1$ that the intervals $\ell_n^2I$ cover
$(0,\eps_0]$. For each $0<\eps\leq\eps_0$, we select a cell with $\delta_n\in I$.
Only this selection depends on $\eps$; the velocity is already fixed.

\smallskip
\noindent\emph{Preserving the unstable spectrum.}
Disjoint velocity supports do not decouple the magnetic equation, since
diffusion connects the cells. Moreover, an approximate eigenfunction
alone does not establish nearby spectrum for this non-selfadjoint
operator. We instead compare resolvents on $\Gamma_\tau=\Gamma-i\tau$
and show that the enclosed Riesz projection is non-zero.

The added axial velocities distinguish the cells spectrally without
changing their local growth rates. In the $K_n$-th Fourier mode, they
contribute $-i\tau$ on the selected core and $-i\tau K_n/K_m$ on
core $m$. We remove only the selected local copy, retaining all added
axial velocities, and denote the resulting background operator by
$A^{\rm bg}_{\eps,n}$. Write the added axial multiplier as
$-i\tau s_n(y)$, so that $s_n=K_n/K_m$ on core $m$. Testing the
background resolvent equation against $(1+i\beta_n)h$, with
\[
  \beta_n=\frac{s_n-1}{(1+s_n)^2},
  \qquad
  \tau\beta_n(s_n-1)
  =\tau\frac{(s_n-1)^2}{(1+s_n)^2},
\]
gives a positive contribution of order $\tau$ on every unselected core.
The denominator keeps
$s_n\beta_n$ bounded, controlling the large scale ratios in the
weighted transport terms. Where the frequency separation vanishes,
no background stretching remains, and diffusion together with
$\re z>0$ provides control. Choosing $\tau$ large and then $\ell_0$
small, we absorb the local velocity terms and the errors from
differentiating the multiplier. Uniformly in the matching cell and
diffusivity, and for $z$ on and inside $\Gamma_\tau$, we obtain
\[
  \|h\|_2+\sqrt\eps\,\|\nabla_\perp h\|_2\leq C\|f\|_2,
  \qquad (z-A^{\rm bg}_{\eps,n})h=f.
\]

Next, we combine the selected whole-plane resolvent with the background
resolvent, using cut-offs on both $f$ and $h$. The
cut-offs vary in the constant-coefficient buffer, so the errors come
only from diffusion. The corresponding approximate inverse has error
\[
  \|E_n(z)\|_{2\to2}
  \leq C\left(\frac{\sqrt\eps}{R_n}+\frac\eps{R_n^2}\right)
  \leq C\bigl(\sqrt{\ell_n}+\ell_n\bigr).
\]
This explains the two spatial scales: diffusion is of order one on the
core, whereas localisation in the buffer has small cost. Taking
$\ell_0$ sufficiently small, we obtain the global resolvent on
$\Gamma_\tau$ by a Neumann series.

It remains to show that there is spectrum inside the contour. The local construction
supplies a compactly supported unit test function $f_*$ with
$\re\langle P_\delta f_*,f_*\rangle>1/2$ throughout $I$, where
$P_\delta$ is the local Riesz projection. Testing the global Riesz
projection against the normalised rescaling of $f_*$ in the selected
cell reproduces this matrix element up to an error smaller than $1/4$.
Hence the global projection is non-zero and gives an eigenvalue whose
real part is bounded below independently of $\eps$. These steps are
carried out in Sections~\ref{sec:localization}--\ref{sec:parametrix}.

We recover $b_3$ to obtain a magnetic eigenmode and take the real part
of its evolution. Integration in the non-zero axial Fourier mode removes
the time-dependent complex phase from the squared norm, giving the exact
exponential growth law; see Section \ref{sec:finish}.

\smallskip
\noindent\emph{The ideal dynamics.}
The same velocity has no exponential particle stretching. Within each
cell, its flow has the form
\[
  (r,\theta,x_3)\longmapsto
  \bigl(r,\theta+t\omega_n(r),x_3+t g_n(r)\bigr).
\]
Since $r|\omega_n'|$ and $|g_n'|$ are uniformly bounded, the flow and
its inverse have Lipschitz constants bounded by $C(1+|t|)$. This gives zero
topological entropy and, by the Cauchy formula, zero exponential growth
for ideal induction. In contrast, the resistive eigenmodes vary on
scales $\ell_n\asymp\sqrt\eps$, where diffusion remains of order one.
Thus the growing mode changes with $\eps$. It does not arise by
perturbing an exponentially growing ideal solution.

\subsection{Relation to earlier work}
\label{sec:short-literature}

We first recall the local instability used in our construction.
The modal reduction, transmission conditions and Bessel dispersion
relation go back to Ponomarenko \cite{Ponomarenko73}; see also
\cite[Supplemental Material, Section~II.A]{GKS18}.
Gilbert studied the high-conductivity limit of the discontinuous screw
flow \cite{Gilbert88}. Smooth screw profiles and regularised helical
vortex sheets were considered in \cite{RSS88,GVR07}. More recently,
Navarro-Fern\'andez and Villringer constructed unstable critical-layer
eigenmodes for smooth helical profiles
\cite[Theorems~1.1 and~1.2]{NV25}. Their constructed branch has growth
rate of order $\eps^{1/3}$. We use the discontinuous calculation at one
fixed diffusivity to select an isolated unstable eigenvalue. We then
realise the operator by a form and preserve the eigenvalue under
regularisation.

The multiscale strategy of assembling rescaled local flows in disjoint
regions of decreasing size goes back at least to the loss-of-regularity
construction of Alberti, Crippa and Mazzucato \cite[Section~3]{ACM14},
developed in detail in \cite[Section~3.1]{ACM19}; see also Jabin
\cite{Jabin16}. Related multiscale ideas underlie the anomalous-diffusion
construction of Armstrong and Vicol \cite{ArmstrongVicol25}, which was
a key inspiration for our work. In the context of the fast-dynamo
problem, Coti Zelati, Sorella and Villringer \cite[Section~4.1]{CSV25}
employed a multiscale localisation argument to construct an autonomous
Lipschitz fast dynamo on $\R^3$. We follow their assignment of
diffusivity intervals to rescaled local dynamos. Their
construction gives one autonomous Lipschitz fast dynamo on $\R^3$.
The whole-space localisation uses an additional index for the time
horizon, with larger regions for longer times. We instead compare
resolvents on a fixed contour and preserve a non-zero matrix element of
the Riesz projection. Thus one cell per diffusivity scale
suffices. The added axial velocities allow us to estimate all other
cells in the same Fourier mode. We give the related geometric and
resolvent precedents at the corresponding steps of the proof.

Smooth autonomous fast dynamos on compact curved three-manifolds were
constructed in \cite{AZRS81,CL97}. Earlier constructions on the flat
torus use time-dependent velocities: \cite{Rowan25} gives positive
limsup growth for a prescribed countable set of diffusivities,
\cite{SV25} gives positive limsup growth for every sufficiently small
diffusivity, and \cite{CSV26} gives an unstable Floquet mode with
continuous diffusion. The subsequent smooth autonomous construction
on $\T^3$ is discussed in Remark \ref{rem:subsequent-resolution}.
For the ideal equation, Coti Zelati and Navarro-Fern\'andez obtain
almost-sure exponential growth from randomised ABC flows \cite{CNF24}.
Navarro-Fern\'andez \cite[Theorem~1.1]{NF26} constructs a deterministic,
time-periodic velocity field, Lipschitz in space, for which every
non-zero divergence-free initial magnetic field in $\Lp^2(\T^3)$
generates a solution of the ideal induction equation with exponential growth in $\Lp^2$.
We refer to \cite{Rowan26random} for subsequent smooth random fast
dynamo action on $\T^3$; its almost-sure growth statement is for each
fixed admissible diffusivity.

Our conclusion concerns the relation between resistive growth, ideal
growth and particle dynamics. For sufficiently smooth velocities, the
absence of exponential particle stretching excludes fast dynamo action
\cite{Vishik89,FriedlanderVishik91}. Moreover, the entropy bound of
Klapper and Young \cite{KlapperYoung95} excludes a $\C^\infty$ fast
dynamo with zero topological entropy.

\subsection{Organisation of the paper}

In Section \ref{sec:local-model}, we construct the smooth local profile
and prove the spectral statement for the horizontal operator.
Section \ref{sec:velocity} contains the multiscale velocity construction.
The assertions concerning the particle flow and ideal induction are proven in Section \ref{sec:particle}.
We introduce the horizontal torus operator and prove the exact scaling
identity in Section \ref{sec:axial}. In
Section \ref{sec:localization}, a weighted form estimate gives the
resolvent bound for the background with only the selected local copy
removed. We combine this resolvent with the selected whole-plane
resolvent in Section \ref{sec:parametrix} and prove that the resulting
Riesz projection is non-zero. In Section \ref{sec:finish}, we recover the
axial magnetic component, prove Theorem \ref{thm:main}, and identify the
magnetic length scale. Appendix \ref{sec:semigroup-growth} establishes
the semigroup and spectral-growth statements.

\subsection*{Acknowledgements}
Lukas Niebel is funded by SNSF Starting Grant
TMSGI2\textunderscore226018 and by the Deutsche Forschungsgemeinschaft (DFG,
German Research Foundation) under Germany's Excellence Strategy
EXC 2044/2--390685587, Mathematics M\"unster:
Dynamics--Geometry--Structure.

\subsection*{Declaration of AI Use}
The first version of this manuscript was assisted by OpenAI's GPT-5.5 Pro and GPT-5.6 Sol.
The simplifications in this revised version of the manuscript were obtained with the help of
OpenAI's GPT-6 Astra. All mathematical claims, calculations, and AI-generated suggestions were
critically reviewed and verified by the author, who takes full responsibility
for the mathematical content and the final manuscript.

\section{A compactly supported helical velocity with unstable spectrum}
\label{sec:local-model}

In this section, we construct a smooth compactly supported velocity with
unstable horizontal spectrum on an interval of diffusivities. We choose
the amplitude of a discontinuous screw flow by the intermediate value
theorem. We then regularise the velocity and vary the diffusivity in a
form estimate. A fixed compactly supported test function detects the
spectrum. The axial magnetic component will
be recovered in Section \ref{sec:finish}.

\subsection{The horizontal operator and its weak form}

We write
$y^\perp=(-y_2,y_1)$ and $w=(w_\perp,w_3)$, where
$w_\perp\colon\R^2\to\R^2$ and $w_3\colon\R^2\to\R$.
For a smooth velocity independent of $x_3$ and satisfying
$\Div_\perp w_\perp=0$, the horizontal equations in
the axial Fourier mode $\e^{2\pi ix_3}$ are closed. At unit diffusivity,
their generator is
\begin{equation}
  A_wb=(\Delta_\perp-4\pi^2)b
  -w_\perp\cdot\nabla_\perp b-2\pi iw_3b
  +(b\cdot\nabla_\perp)w_\perp,
  \qquad b\colon\R^2\to\CC^2.
  \label{eq:horizontal-induction}
\end{equation}
For this reduction in the helical setting, we refer to
\cite{Ponomarenko73} and \cite[Section~1.1]{NV25}.
We want to use this operator for bounded discontinuous velocities.
For a jump velocity, stretching can contain a distribution supported on
an interface. The weak formulation must retain this contribution, rather
than simply use the classical derivatives on either side.

For this purpose, we interpret the stretching term by integration by parts as
\[
  \inner{(b\cdot\nabla_\perp)w_\perp}{\phi}
  =-\int_{\R^2}w_\perp\cdot
  \bigl((\Div_\perp b)\overline\phi
  +(b\cdot\nabla_\perp)\overline\phi\bigr)\dd y.
\]
Thus no derivative of the velocity appears in the form
\begin{align}
  \mathfrak l_w(b,\phi)
  ={} & \inner{\nabla_\perp b}{\nabla_\perp\phi}
  +4\pi^2\inner b\phi
  +\inner{w_\perp\cdot\nabla_\perp b}\phi
  +2\pi i\inner{w_3b}\phi \notag                 \\
      & +\int_{\R^2}w_\perp\cdot
  \bigl((\Div_\perp b)\overline\phi
  +(b\cdot\nabla_\perp)\overline\phi\bigr)\dd y,
  \qquad b,\phi\in\Hs^1(\R^2;\CC^2).
  \label{eq:jump-sectorial-form}
\end{align}
For $w\in\Lp^\infty$, this form is bounded on $\Hs^1$. Moreover,
Young's inequality gives
\[
  \re\mathfrak l_w(b,b)
  \geq\frac12\norm{\nabla_\perp b}_2^2-C\norm b_2^2,
\]
where $C$ depends only on $\norm w_\infty$. The same
first-order estimates control the imaginary part by a sufficiently
large positive shift of the real part. Consequently
$\mathfrak l_w$ is closed and sectorial. We denote by $-A_w$ the
operator on $\Lp^2(\R^2;\CC^2)$ associated with
$\mathfrak l_w$; see \cite[Theorem VI.2.1]{Kato}. In particular,
$b\in D(A_w)$ and $A_wb=f$ precisely when $b\in\Hs^1$ and
\[
  \mathfrak l_w(b,\phi)=-\inner f\phi
  \qquad\text{for every }\phi\in\Hs^1.
\]

\subsection{An unstable eigenvalue for the discontinuous flow}

We now consider the Ponomarenko screw flow \cite{Ponomarenko73}.
For $\Omega>0$, to be chosen below, we set $U=-\Omega/(2\pi)$.
In polar coordinates $(r,\theta)$, we define
\begin{equation}
  V^0(y)=\one_{\{r<1\}}\bigl(\Omega r e_\theta+Ue_3\bigr).
  \label{eq:jump-flow}
\end{equation}
The normal component vanishes on both sides of $r=1$. Hence
$\Div_\perp V^0_\perp=0$ in distributions. The separated equations,
transmission conditions and resonant dispersion relation go back to
Ponomarenko \cite{Ponomarenko73}; see also \cite{Gilbert88} and
\cite[Supplemental Material, Section~II.A]{GKS18}.
We verify the weak domain condition and select an unstable parameter
by the intermediate value theorem.

\begin{lemma}\label{lem:jump-horizontal-mode}
  There exists $\Omega>0$ such that, for the velocity $V^0$ in
  equation \eqref{eq:jump-flow}, the operator $A_{V^0}$ has an
  eigenvalue $\lambda_*$ with $\re\lambda_*>0$.
  An associated non-zero eigenfunction belongs to the angular mode
  $b=\e^{i\theta}(B_re_r+B_\theta e_\theta)$, is smooth on each
  side of $r=1$, and decays exponentially at infinity.
\end{lemma}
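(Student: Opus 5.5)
The plan is to reduce the eigenvalue problem to an explicit Bessel dispersion relation. Because of the resonant choice $U=-\Omega/(2\pi)$, this relation becomes a scalar equation in $\Omega$ at a fixed real growth rate, which the intermediate value theorem solves.

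\textbf{Step 1: separated equations.} Insert the ansatz $b=\e^{i\theta}(B_re_r+B_\theta e_\theta)$ into \eqref{eq:horizontal-induction} on each side of $r=1$.
\begin{itemize}
\item For $r>1$ the velocity vanishes, so $A_{V^0}$ acts as $\Delta_\perp-4\pi^2$.
\item For $r<1$ the solid-body rotation $\Omega re_\theta$ acts on a vector field of angular mode $1$. Transport contributes $-\Omega\partial_\theta$ together with a rotation of the frame. This frame rotation is exactly cancelled by the stretching $(b\cdot\nabla_\perp)(\Omega y^\perp)=\Omega b^\perp$. The net effect is the multiplier $-i\Omega$.
\item The axial term contributes $-2\pi iU=+i\Omega$, which cancels it.
\end{itemize}
Hence on both sides the eigenvalue equation reads $(\Delta_\perp-q^2)b=0$ with $q^2=4\pi^2+\lambda$. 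In the variables $B_\pm=B_r\pm iB_\theta$ it decouples into modified Bessel equations of orders $0$ and $2$. Requiring regularity at the origin and decay at infinity gives
\begin{itemize}
\item $B_+=\alpha_+I_0(qr)$ and $B_-=\alpha_-I_2(qr)$ for $r<1$,
\item $B_+=\beta_+K_0(qr)$ and $B_-=\beta_-K_2(qr)$ for $r>1$.
\end{itemize}
Here $\re q>0$ is taken. Exponential decay at infinity and smoothness on each side are then immediate.

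\textbf{Step 2: weak transmission conditions.} Take such a piecewise field, continuous across $r=1$ so that $b\in\Hs^1$, and test \eqref{eq:jump-sectorial-form} against $\phi\in\Hs^1$. Integrating by parts separately on $\{r<1\}$ and $\{r>1\}$, the bulk terms vanish by Step 1, and only boundary integrals on the circle remain. The velocity jumps only in its tangential and axial parts, and the form contains $w_\perp$ undifferentiated. The boundary terms therefore combine into two conditions:
\begin{itemize}
\item continuity of $B_r$ and $B_\theta$;
\item a jump condition $\jump{\partial_rB_r}=0$ and $\jump{\partial_rB_\theta}=-\Omega B_r(1)$, up to a sign fixed by the careful computation.
\end{itemize}
Conversely, these conditions imply that $\mathfrak l_{V^0}(b,\phi)=-\lambda\inner b\phi$ for all $\phi$, so $b\in D(A_{V^0})$ and $A_{V^0}b=\lambda b$. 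This is the weak domain verification mentioned before the lemma.

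\textbf{Step 3: the dispersion relation.} The four linear conditions on $(\alpha_\pm,\beta_\pm)$ have a non-trivial solution exactly when a $4\times4$ determinant vanishes. Using continuity to eliminate $\beta_\pm$ and the Wronskian $I_mK_m'-I_m'K_m=-1/(qr)$ at $r=1$, the condition becomes the Ponomarenko relation. In its resonant form it reads
\[
  \frac{c\,\Omega}{q}\bigl(I_0(q)K_0(q)-I_2(q)K_2(q)\bigr)=\pm i\cdot(\text{explicit factor}),
\]
which is the shape quoted in \cite{GKS18,Gilbert88}. I would write it as $\Omega^2=G(\lambda)$, with $G$ built from products $I_mK_m$.

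\textbf{Step 4: intermediate value argument.} Fix a real $\lambda_*>0$, for instance small. Then $q>2\pi$ is real and $G(\lambda_*)$ is an explicit real number. The main obstacle is showing that $G(\lambda_*)$ is positive, so that $\Omega=\sqrt{G(\lambda_*)}>0$ exists. If the resonant relation instead produces a complex condition, I would treat $\Omega$ as the parameter and follow the real part of a root branch in $\lambda$. I would show that this real part is negative for small $\Omega$, where the velocity is a small perturbation of $\Delta_\perp-4\pi^2$ and the spectrum lies in $\re\lambda\le-4\pi^2+O(\Omega)$. I would show it is positive for large $\Omega$, using the large-$\Omega$ asymptotics of Gilbert \cite{Gilbert88}. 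Continuity of the root in $\Omega$, by Rouché's theorem applied to the analytic dispersion function, together with the intermediate value theorem, then gives an $\Omega$ with $\re\lambda_*>0$.

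The non-zero coefficient vector gives the eigenfunction, which has the stated form. Its non-triviality and exponential decay follow from the Bessel representation.
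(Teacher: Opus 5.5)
Your Steps 1 and 2 agree with the paper. The resonance $2\pi U=-\Omega$ turns the interior operator into $\Delta_\perp-4\pi^2$. The interface distribution $-\Omega b_re_\theta\,\delta_{\{r=1\}}$ coming from stretching yields $\jump{B_r'}=0$ and $\jump{B_\theta'}=\Omega B_r$. (Minor point: with $B_\pm=B_r\pm iB_\theta$ one has $b_1+ib_2=\e^{2i\theta}B_+$, so $B_+$ carries order $2$ and $B_-$ order $0$, not the other way round.)

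The gap is in Steps 3--4. Eliminating the coefficients with the Wronskian gives exactly
\[
  \mathcal D(\kappa):=I_0(\kappa)K_0(\kappa)-I_2(\kappa)K_2(\kappa)=-\frac{2i}{\Omega},
  \qquad \kappa^2=4\pi^2+\lambda,\quad \re\kappa>0 .
\]
This relation is linear in $\Omega$ and forces $\mathcal D(\kappa)$ onto the negative imaginary axis. For real $\lambda_*>-4\pi^2$, $\kappa$ is real and $\mathcal D(\kappa)$ is real. Hence this mode has no real eigenvalue for any $\Omega>0$; in your notation, $G(\lambda_*)=-4/\mathcal D(\kappa)^2<0$. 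The obstacle you flagged is therefore fatal to your primary route: the unstable eigenvalue is necessarily non-real.

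Your fallback does not repair this. The lemma needs only one $\Omega$ with one root in the right half-plane. If you take such a root at large $\Omega$ from Gilbert's asymptotics, then the small-$\Omega$ endpoint, the Rouch\'e continuation and the intermediate value theorem are all superfluous. That existence claim is exactly what you leave unproved.

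The continuation would also fail as stated. For small $\Omega$ the target $-2i/\Omega$ is large and purely imaginary. In $\{\re\kappa>0\}$, $\abs{\mathcal D}$ is large only near $\kappa=0$, where $\mathcal D(\kappa)=-\log\kappa+O(1)$ has large positive real part. So there is no root to start from. A branch can only enter from $\re\kappa=0$, that is, from the essential spectrum $(-\infty,-4\pi^2]$, across which the dispersion function is not analytic in $\lambda$.

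The missing idea is to reverse the roles of the unknowns: choose $\kappa$ first and read off $\Omega$.
\begin{enumerate}
\item The leading terms of $I_\nu(\kappa)K_\nu(\kappa)=\frac1{2\kappa}-\frac{4\nu^2-1}{16\kappa^3}+O(\abs\kappa^{-5})$ cancel, so $\mathcal D(\kappa)=\kappa^{-3}+O(\abs\kappa^{-5})$.
\item Take $\kappa=\varrho(1+it)$ with $t\in[1/2,3/4]$ and $\varrho$ large. The function $(1+it)^{-3}$ has real part $(1-3t^2)/(1+t^2)^3$ and imaginary part $(t^3-3t)/(1+t^2)^3$. Hence $\re\mathcal D$ changes sign on this segment while $\im\mathcal D<0$.
\item The intermediate value theorem in $t$ gives $\re\mathcal D(\kappa)=0$.
\item Then $\Omega:=-2/\im\mathcal D(\kappa)>0$ satisfies the dispersion relation exactly, with $\re\lambda_*=\varrho^2(1-t^2)-4\pi^2>0$.
\end{enumerate}
This is a rigorous, self-contained version of the large-$\Omega$ regime you wanted to import; here $\Omega\asymp\varrho^3$.
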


\begin{proof}
  We first determine the radial equations and the transmission conditions.
  For $j\in\Z$ we write
  \[
    b=\e^{ij\theta}(B_re_r+B_\theta e_\theta),
    \qquad B_\pm=B_r\pm iB_\theta.
  \]
  Indeed, $b_1\pm ib_2=\e^{i(j\pm1)\theta}B_\pm$: the shifted
  angular orders come from expressing the rotating polar basis in
  Cartesian coordinates. In these combinations the angular part of
  diffusion acts separately on $B_+$ and $B_-$.
  The vector Laplacian satisfies
  \[
    \e^{-ij\theta}(\Delta_\perp b)_\pm
    =\left(\partial_r^2+r^{-1}\partial_r
    -(j\pm1)^2r^{-2}\right)B_\pm.
  \]
  In the interior, differentiation of the polar basis in
  $-\Omega\partial_\theta b$ contributes
  $-\Omega\e^{ij\theta}(B_re_\theta-B_\theta e_r)$.
  Stretching contributes the opposite term. Thus the complete
  transport--stretching term is $-i(j\Omega+2\pi U)b$.
  Choosing $j=1$ and recalling $2\pi U=-\Omega$, we obtain the same
  radial equations in the interior and exterior:
  \[
    B_\pm''+r^{-1}B_\pm'
    -\left(\nu_\pm^2r^{-2}+\kappa^2\right)B_\pm=0,
    \qquad \nu_+=2,\quad\nu_-=0,
    \qquad \kappa^2=4\pi^2+\lambda,
  \]
  where we choose $\re\kappa>0$.

  We use the convention $\jump f=f_{\rm out}-f_{\rm in}$.
  A piecewise smooth field in $\Hs^1$ has a common trace at the
  interface. For such a field, the singular parts of the generator are
  \[
    (\Delta_\perp b)_{\rm sing}
    =\jump{\partial_rb}\,\delta_{\{r=1\}},\qquad
    \bigl((b\cdot\nabla_\perp)V^0_\perp\bigr)_{\rm sing}
    =-\Omega b_r e_\theta\,\delta_{\{r=1\}}.
  \]
  The remaining terms have no interface contribution. Hence the weak
  eigenvalue equation is equivalent to the bulk equations, continuity,
  and
  \begin{equation}
    \jump{B_r'}=0,\qquad
    \jump{B_\theta'}=\Omega B_r,\qquad
    \jump{B_\pm'}=\pm\frac{i\Omega}{2}(B_++B_-).
    \label{eq:transmission-horizontal}
  \end{equation}
  The radial component has continuous diffusive flux, while its value
  at the interface drives a jump in the azimuthal flux. Thus the matching
  conditions retain the stretching responsible for instability, although
  the bulk equations have reduced to diffusion.

  Let $I_\nu$ and $K_\nu$ denote the modified Bessel functions
  \cite[Section 10.25]{DLMF}. Regularity at the origin and decay at
  infinity give
  \begin{equation}
    B_\pm(r)=
    \begin{cases}
      C_\pm I_{\nu_\pm}(\kappa r)/I_{\nu_\pm}(\kappa), & 0<r<1, \\[1mm]
      C_\pm K_{\nu_\pm}(\kappa r)/K_{\nu_\pm}(\kappa), & r>1.
    \end{cases}
    \label{eq:bessel-solutions}
  \end{equation}
  We restrict to
  $\kappa=\varrho(1+it)$ with $t\in[1/2,3/4]$ and
  $\varrho>0$ sufficiently large. This segment lies in a fixed closed
  subsector of $\{\abs{\arg\kappa}<\pi/2\}$.
  The product expansion
  \cite[equation 10.40.6 and Section 10.40(iii)]{DLMF} gives
  \begin{equation}
    I_\nu(\kappa)K_\nu(\kappa)
    =\frac1{2\kappa}-\frac{4\nu^2-1}{16\kappa^3}
    +O(\varrho^{-5}),\qquad \nu\in\{0,2\},
    \label{eq:bessel-product-expansion}
  \end{equation}
  uniformly in $t$. In particular, both products are non-zero for
  sufficiently large $\varrho$, so the denominators in equation
  \eqref{eq:bessel-solutions} do not vanish.

  Applying the Wronskian identity
  \cite[equation 10.28.2]{DLMF}, we obtain
  \[
    I_\nu'K_\nu-I_\nu K_\nu'=\kappa^{-1},\qquad
    \jump{B_\pm'}=-\frac{C_\pm}{I_{\nu_\pm}(\kappa)K_{\nu_\pm}(\kappa)}.
  \]
  We set $S=C_++C_-$. The transmission conditions are equivalent to
  \[
    C_\pm=\mp\frac{i\Omega}{2}
    I_{\nu_\pm}(\kappa)K_{\nu_\pm}(\kappa)S.
  \]
  A non-zero pair satisfies $S\neq0$. Summing these identities gives
  \begin{equation}
    \mathcal D(\kappa):=
    I_0(\kappa)K_0(\kappa)-I_2(\kappa)K_2(\kappa)
    =-\frac{2i}{\Omega}.
    \label{eq:dispersion}
  \end{equation}
  Conversely, if equation \eqref{eq:dispersion} holds, the preceding
  formulas with $S=1$ give coefficients whose sum is one. Hence they
  form a non-zero pair satisfying the transmission conditions.

  Here $\Omega$ is still free: it suffices to put $\mathcal D(\kappa)$
  on the negative imaginary axis, and its magnitude will determine
  $\Omega$. The leading Bessel terms cancel, leaving $\kappa^{-3}$,
  whose real part changes sign at $\arg\kappa=\pi/6$. Near this angle
  $\re\kappa^2>0$, so taking $\abs\kappa$ large can also overcome axial
  diffusion. The sign change below turns this asymptotic observation
  into an exact solution.

  We now choose $\kappa$ and then $\Omega$. Applying equation
  \eqref{eq:bessel-product-expansion}, we obtain
  \[
    \varrho^3\mathcal D(\varrho(1+it))
    =(1+it)^{-3}+O(\varrho^{-2})
  \]
  uniformly for $t\in[1/2,3/4]$.
  We compute
  \[
    \re(1+it)^{-3}=\frac{1-3t^2}{(1+t^2)^3},\qquad
    \im(1+it)^{-3}=\frac{t^3-3t}{(1+t^2)^3}.
  \]
  The real part is positive at $t=1/2$ and negative at $t=3/4$.
  The imaginary part is strictly negative throughout $[1/2,3/4]$.
  For sufficiently large $\varrho$, the same statements hold for
  $\mathcal D(\varrho(1+it))$. Applying the intermediate value
  theorem, we find $t_\varrho\in(1/2,3/4)$ such that
  $\re\mathcal D(\varrho(1+it_\varrho))=0$.
  We set
  \[
    \kappa=\varrho(1+it_\varrho),\qquad
    \Omega=-\frac{2}{\im\mathcal D(\kappa)}>0,
    \qquad U=-\Omega/(2\pi).
  \]
  Then equation \eqref{eq:dispersion} holds exactly. Increasing
  $\varrho$ beforehand if necessary, we also obtain
  \begin{equation}
    \lambda_*:=\kappa^2-4\pi^2,\qquad
    \re\lambda_*
    =\varrho^2(1-t_\varrho^2)-4\pi^2
    \geq\frac7{16}\varrho^2-4\pi^2>0.
    \label{eq:local-eigenvalue}
  \end{equation}

  Finally, we choose the preceding coefficients with $S=1$ and set
  $B_r=(B_++B_-)/2$ and $B_\theta=(B_+-B_-)/(2i)$.
  The Cartesian combinations of the resulting field are
  $b_1+ib_2=\e^{2i\theta}B_+$ and $b_1-ib_2=B_-$.
  Since $I_2(\kappa r)=r^2$ times an analytic function of $r^2$
  and $I_0(\kappa r)$ is analytic in $r^2$, the field is smooth at
  the origin. The exterior branches and their derivatives decay
  exponentially by \cite[equations 10.40.2 and 10.40.4]{DLMF}. Continuity at $r=1$ therefore gives $b\in\Hs^1$.
  The transmission conditions cancel the interface distributions, so
  \[
    \mathfrak l_{V^0}(b,\phi)=-\lambda_*\inner b\phi
    \qquad\text{for every }\phi\in\Hs^1.
  \]
  This proves $b\in D(A_{V^0})$ and $A_{V^0}b=\lambda_*b$.
\end{proof}

\subsection{Spectral isolation and regularisation}

We next isolate the eigenvalue from Lemma \ref{lem:jump-horizontal-mode}
and regularise the velocity on common form spaces. We use the
perturbation theory of sectorial forms and Riesz projections
\cite[Chapters~IV and~VI]{Kato}. Related spectral isolation and contour
arguments for smooth helical dynamos appear in
\cite[Sections~4.2 and~5]{NV25}. We only need the spectrum in the open
right half-plane.

Let $w\in\Lp^\infty(\R^2;\R^3)$ have compact support. On
$V=\Hs^1(\R^2;\CC^2)$, with antidual $V^*=\Hs^{-1}(\R^2;\CC^2)$,
we write the shifted form operator as
\[
  \mathbb A_w(z)=F_z+K_w\colon V\to V^*,\qquad
  F_z=z-(\Delta_\perp-4\pi^2).
\]
Here $K_w$ contains the velocity terms of
equation \eqref{eq:jump-sectorial-form}. The Fourier multiplier
$(z+\abs\xi^2+4\pi^2)^{-1}$ shows that $F_z$ is an isomorphism
from $V$ to $V^*$ for $\re z>0$.

The whole-plane resolvent need not be compact. What is confined to a
bounded region is the velocity perturbation; its compactness between
the form spaces will suffice to isolate the unstable spectrum.

Choose $R_w>0$ such that $\supp w\subset D(0,R_w)$. Rellich's theorem implies that
\[
  R\colon\Hs^1(\R^2)\to\Lp^2(D(0,R_w))
\]
is compact. Hence its adjoint
\[
  R^*\colon\Lp^2(D(0,R_w))\to\Hs^{-1}(\R^2)
\]
is compact as well.

In the case that the derivative falls on the test function,
we restrict the input to $D(0,R_w)$.
This step is a compact map. Then, multiplication by the bounded coefficient and pairing
with the derivative of the test function defines a bounded map into $\Hs^{-1}(\R^2)$.
Hence the composition is compact. On the other hand if the derivative falls on the input,
differentiation and multiplication by the coefficient give a bounded map into $\Lp^2(D(0,R_w))$.
Composing this map with $R^*$ gives a compact operator.
Finally, the term of zero oder can be decomposed in $R$, multiplication, and $R^*$.
This shows that every term in $K_w$ is compact from $V$ to $V^*$.

Consequently
\[
  \mathbb A_w(z)=F_z\bigl(\id+F_z^{-1}K_w\bigr)
\]
is an analytic Fredholm family in $\{\re z>0\}$. The sectorial
estimate makes $\mathbb A_w(\beta)$ invertible for a sufficiently
large real $\beta$. The analytic Fredholm theorem
\cite[Chapter XI, Corollary 8.4]{GGK90} therefore gives a
meromorphic inverse with finite-rank principal parts.
Restricting this inverse to $\Lp^2\subset V^*$ yields $(z-A_w)^{-1}$ wherever the
previous form operator is invertible. On the other hand at every point where the form operator
is not invertible, its Fredholm index zero gives a non-zero kernel.
The above definition of $A_w$ identifies the kernel with the eigenspace
of $A_w$. Hence every spectral point of $A_w$ in the right
half-plane is an isolated eigenvalue of finite algebraic multiplicity.

Applying this conclusion to $w=V^0$, we fix a positively oriented
circle $\Gamma$ whose closed interior $\cG$ lies in
$\{\re z>0\}$ and whose interior contains $\lambda_*$ and no
other spectral point of $A_{V^0}$. We keep this circle fixed.

For $0<h<1/2$, choose a smooth radial cut-off $\chi_h$ with
$0\leq\chi_h\leq1$, equal to one on $[0,1-h]$ and zero on
$[1+h,\infty)$. We define
\begin{equation}
  v_h(y)=\chi_h(\abs y)(-\Omega y_2,\Omega y_1,U).
  \label{eq:smooth-local-velocity}
\end{equation}
Then $v_h\in\C^\infty_c(\R^2;\R^3)$ and
$\Div_\perp(v_h)_\perp=0$, since
$\nabla\chi_h(\abs y)\cdot y^\perp=0$.

\begin{proposition}[Uniform local instability]\label{prop:local-seed}
  There exist $h\in(0,1/2)$, an
  interval $I=[\delta_-,\delta_+]$ with $0<\delta_-<1<\delta_+$,
  a function $f_*\in\C^\infty_c(\R^2;\CC^2)$, and $L_*>0$
  with the following properties. For the velocity $v=v_h$ from
  equation \eqref{eq:smooth-local-velocity} and $\delta\in I$, we define
  \begin{equation}
    \cL_\delta b=\delta(\Delta_\perp-4\pi^2)b
    -v_\perp\cdot\nabla_\perp b-2\pi iv_3b
    +(b\cdot\nabla_\perp)v_\perp
    \label{eq:local-generator}
  \end{equation}
  on $\Lp^2(\R^2;\CC^2)$ with domain $\Hs^2(\R^2;\CC^2)$.
  Then $\Gamma\subset\rho(\cL_\delta)$ for every $\delta\in I$,
  and
  \begin{equation}
    \sup_{\substack{\delta\in I\\z\in\Gamma}}
    \norm{(z-\cL_\delta)^{-1}}_{\Lp^2\to\Hs^1}<\infty.
    \label{eq:local-continuation-bound}
  \end{equation}
  The Riesz projections
  $P_\delta=(2\pi i)^{-1}\int_\Gamma(z-\cL_\delta)^{-1}\dd z$
  satisfy
  \begin{equation}
    \norm{f_*}_2=1,\qquad
    \supp f_*\subset D(0,L_*),\qquad
    \re\inner{P_\delta f_*}{f_*}>\frac12
    \quad\text{for every }\delta\in I.
    \label{eq:local-spectral-witness}
  \end{equation}
  In particular, each $\cL_\delta$ has spectrum inside $\Gamma$, and
  \begin{equation}
    g_*:=\min_{\zeta\in\cG}\re\zeta>0.
    \label{eq:spectral-gap}
  \end{equation}
\end{proposition}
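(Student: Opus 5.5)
Throughout, write $\cL_\delta$ for the operator of equation \eqref{eq:local-generator} built from $v_h$, and $\cL^0_\delta$ for the same expression with $v_h$ replaced by $V^0$, realised by the form $\mathfrak l$ with the diffusive part scaled by $\delta$. At $\delta=1$ we have $\cL^0_1=A_{V^0}$. The plan is to regard the pair $(h,\delta)$ as a perturbation parameter near $(0,1)$. We compare resolvents on the fixed circle $\Gamma$ through the form operators $\mathbb A(z)\colon V\to V^*$, and we detect the spectrum through a matrix element of the Riesz projection.

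\emph{Step 1: the unperturbed matrix element.} By the analytic Fredholm argument, $\Gamma\subset\rho(A_{V^0})$ and $P_0:=(2\pi i)^{-1}\int_\Gamma(z-A_{V^0})^{-1}\dd z$ is a non-zero finite-rank projection, since $\lambda_*$ lies inside $\Gamma$. Its range contains the eigenfunction $b$ from Lemma \ref{lem:jump-horizontal-mode}. A non-zero projection $P_0$ has a unit vector $g$ with $\re\inner{P_0g}{g}>1/2$: take $g=b/\|b\|_2$, for which $\inner{P_0g}{g}=1$. Since $\C^\infty_c$ is dense in $\Lp^2$ and $P_0$ is bounded, we may choose $f_*\in\C^\infty_c$ with $\|f_*\|_2=1$, support in some disc $D(0,L_*)$, and $\re\inner{P_0f_*}{f_*}>3/4$.

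\emph{Step 2: form-level continuity.} For $z\in\Gamma$, write $\mathbb A_{h,\delta}(z)=z+\delta(-\Delta_\perp+4\pi^2)+K_{v_h}$. We have
\[
  \mathbb A_{h,\delta}(z)-\mathbb A_{0,1}(z)=(\delta-1)(-\Delta_\perp+4\pi^2)+K_{v_h}-K_{V^0}.
\]
The first term has $V\to V^*$ norm at most $C|\delta-1|$. The second term is the difficulty, because $v_h-V^0$ is small only in $\Lp^p$ for $p<\infty$, not in $\Lp^\infty$, so $K_{v_h}\to K_{V^0}$ fails in operator norm. Every term of $K_w$ is linear in $w$, has the form $\int w\,(\partial b)\,\overline\phi$ or $\int w\,b\,\overline{\partial\phi}$, and is supported in $D(0,2)$. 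By Hölder's inequality with Sobolev embedding $\Hs^1\hookrightarrow\Lp^q$ for every $q<\infty$, we get
\[
  \bigl|\textstyle\int(v_h-V^0)\,b\,\partial\overline\phi\bigr|\le\|v_h-V^0\|_{\Lp^4}\|b\|_{\Lp^4}\|\phi\|_{\Hs^1}.
\]
This is $o(1)\|b\|_{\Hs^1}\|\phi\|_{\Hs^1}$ as $h\to0$, because $\|v_h-V^0\|_{\Lp^4}\lesssim h^{1/4}$. For the term $\int (v_h-V^0)\,\partial b\,\overline\phi$, move the weight onto $\phi$ symmetrically: bound it by $\|\partial b\|_2\,\|(v_h-V^0)\phi\|_2\le \|b\|_{\Hs^1}\|v_h-V^0\|_{\Lp^4}\|\phi\|_{\Lp^4}$. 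Hence
\[
  \|\mathbb A_{h,\delta}(z)-\mathbb A_{0,1}(z)\|_{V\to V^*}\to0
\]
uniformly on $\Gamma$ as $(h,\delta)\to(0,1)$. The operator $\mathbb A_{0,1}(z)^{-1}\colon V^*\to V$ exists and is continuous, hence uniformly bounded, on the compact set $\Gamma$. A Neumann series then gives invertibility of $\mathbb A_{h,\delta}(z)$ for $h\le h_0$ and $|\delta-1|\le\eta$, with uniformly bounded inverses and
\[
  \|\mathbb A_{h,\delta}(z)^{-1}-\mathbb A_{0,1}(z)^{-1}\|_{V^*\to V}\to0 .
\]
I expect this step to be the main obstacle: the jump velocity forces us to work with $\Lp^4$ smallness against $\Hs^1$ forms, rather than with $\Lp^\infty$ closeness.

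\emph{Step 3: conclusion.} Fix such an $h$ and set $I=[1-\eta,1+\eta]$. For smooth $v_h$, the form operator coincides with $\cL_\delta$ on the domain $\Hs^2$. This follows from elliptic regularity, since the resolvent maps $\Lp^2$ into $\Hs^2$ when the coefficients are smooth. Restricting the inverse to $\Lp^2$ gives $\Gamma\subset\rho(\cL_\delta)$ and the bound \eqref{eq:local-continuation-bound}, because $\Lp^2\hookrightarrow V^*$ and $V=\Hs^1$. Integrating the resolvent difference over $\Gamma$ gives $\|P_\delta-P_0\|_{\Lp^2\to\Lp^2}<1/4$ after shrinking $h_0$ and $\eta$. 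Then $\re\inner{P_\delta f_*}{f_*}>3/4-1/4=1/2$, which is \eqref{eq:local-spectral-witness}. Consequently $P_\delta\ne0$, so $\cL_\delta$ has spectrum inside $\Gamma$. Finally, $g_*>0$ holds because $\cG$ is compact and lies in the open right half-plane.
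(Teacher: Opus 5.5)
Your proposal is correct and follows essentially the same route as the paper. Both arguments use an $\Lp^4$/Hölder estimate giving $\|\mathbb A_{h,\delta}(z)-\mathbb A_{0,1}(z)\|_{V\to V^*}\le C(h^{1/4}+|\delta-1|)$, then a Neumann series on the fixed contour $\Gamma$, then a $\C^\infty_c$ approximation of a unit vector in $\operatorname{Ran}P^0$ (your normalised eigenfunction $b$ is such a vector), and finally elliptic regularity to identify the form domain with $\Hs^2$. The only slip is that your list of terms in $K_w$ leaves out the zeroth-order term $2\pi i\int w_3\, b\,\overline\phi$, which is bounded in the same way by $\|v_h-V^0\|_4\|b\|_4\|\phi\|_2$.
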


\begin{proof}
  We perturb the velocity and diffusivity on the same form spaces.
  For $h>0$ and $\delta>0$, we set
  \[
    \mathfrak l_{h,\delta}(b,\phi)
    =\mathfrak l_{v_h}(b,\phi)
    +(\delta-1)\bigl(\inner{\nabla_\perp b}{\nabla_\perp\phi}
    +4\pi^2\inner b\phi\bigr).
  \]
  We write $\mathbb A_{h,\delta}(z)\colon V\to V^*$ for the
  shifted form operator and $\cL_{h,\delta}$ for the associated
  generator on $\Lp^2$. At $(h,\delta)=(0,1)$, we use $v_0=V^0$.
  H\"older's inequality and $\Hs^1(\R^2)\hookrightarrow\Lp^4(\R^2)$
  give
  \begin{equation}
    \norm{\mathbb A_{h,\delta}(z)-\mathbb A_{0,1}(z)}_{V\to V^*}
    \leq C\bigl(h^{1/4}+\abs{\delta-1}\bigr).
    \label{eq:form-small}
  \end{equation}
  Indeed, the velocity difference is uniformly bounded and supported
  in an annulus of area $O(h)$, so
  $\norm{v_h-V^0}_4\leq Ch^{1/4}$. We estimate each first-order
  term with exponents $4,2,4$ or $4,4,2$, according to which field
  is differentiated. The zeroth-order term is bounded in the same
  way. The diffusion difference is bounded directly on $V\times V$.
  The constant in equation \eqref{eq:form-small} is independent of
  $z$, $h\in(0,1/2)$ and $\delta$.
  Smallness here measures the width of the transition, not the size of
  the velocity gradients created by smoothing. Those gradients need not
  be small.

  The inverses $\mathbb A_{0,1}(z)^{-1}\colon V^*\to V$ are
  uniformly bounded on $\Gamma$. Applying a Neumann series, we
  obtain, for $h^{1/4}+\abs{\delta-1}$ sufficiently small,
  \[
    \sup_{z\in\Gamma}
    \norm{\mathbb A_{h,\delta}(z)^{-1}
      -\mathbb A_{0,1}(z)^{-1}}_{V^*\to V}
    \leq C\bigl(h^{1/4}+\abs{\delta-1}\bigr).
  \]
  Restricting to $\Lp^2$ gives the resolvents of $\cL_{h,\delta}$
  on $\Gamma$. Integrating there, we deduce
  \[
    \norm{P_{h,\delta}-P^0}_{2\to2}
    \leq C\bigl(h^{1/4}+\abs{\delta-1}\bigr),\qquad
    P^0=\frac1{2\pi i}\int_\Gamma(z-A_{V^0})^{-1}\dd z,
  \]
  where $P_{h,\delta}$ denotes the corresponding Riesz projection.

  The projection need not be orthogonal, so the real part of its
  scalar matrix element need not be positive on arbitrary tests.
  We instead approximate a vector on which the projection acts as
  the identity; compact support
  will let us place this test inside one cell later.

  We choose a unit vector $x\in\operatorname{Ran}P^0$.
  Since $P^0x=x$, density gives a unit function
  $f_*\in\C^\infty_c(\R^2;\CC^2)$ such that
  $\re\inner{P^0f_*}{f_*}>3/4$. We choose $L_*>0$ with
  $\supp f_*\subset D(0,L_*)$. We now fix $h>0$ and an interval
  $I=[\delta_-,\delta_+]$ about $1$, with $\delta_->0$, so small
  that $\norm{P_{h,\delta}-P^0}_{2\to2}<1/4$ for every
  $\delta\in I$. Writing $v=v_h$, $\cL_\delta=\cL_{h,\delta}$
  and $P_\delta=P_{h,\delta}$ proves
  equation \eqref{eq:local-spectral-witness}.

  It remains to identify the operator domains. An element of the
  form-operator domain belongs to $\Hs^1$. Since $v$ is smooth with
  bounded derivatives, the lower-order terms belong to $\Lp^2$.
  The equation therefore gives $\Delta_\perp b\in\Lp^2$, whence
  $b\in\Hs^2$ by the Fourier estimate. The converse follows by
  integration by parts. Moreover, the form inverse estimate gives
  $\norm{(z-\cL_\delta)^{-1}f}_{\Hs^1}\leq C\norm f_2$
  uniformly for $(\delta,z)\in I\times\Gamma$. This proves
  equation \eqref{eq:local-continuation-bound}. Finally,
  $P_\delta\ne0$ implies that the spectrum inside $\Gamma$ is
  non-empty. This completes the proof.
\end{proof}

\section{Construction of the global velocity on the torus}
\label{sec:velocity}

\subsection{Choice of the parameters}

In this section, we construct the velocity field from the local profile
in Proposition~\ref{prop:local-seed}. We keep
\[
  v,\qquad I,\qquad\Gamma,\qquad\cG,
  \qquad g_*,\qquad f_*,\qquad L_*
\]
fixed. For each $n\geq0$, we introduce a horizontal scale $\ell_n$,
an integer axial Fourier index $K_n$, and a centre $p_n$. We call the
support of the rescaled profile the \emph{local-flow core}. Around this
core, we introduce a \emph{buffer disc} $D(p_n,R_n)$ of radius
$R_n=\sqrt{\ell_n}$ and add a smoothly cut off axial velocity in this disc.
As in Subsection~\ref{sec:ideas}, we call $D(p_n,R_n)$ the $n$-th
cell; its corresponding region in $\T^3$ is $D(p_n,R_n)\times\T$.

We choose a number $q>1$ and a fixed axial cut-off $\vartheta$. The
parameter $\tau$ of the axial shifts will be fixed in Proposition~\ref{prop:background-resolvent}. After that we choose the initial Fourier index
$K_0$ sufficiently large for the geometric and spectral estimates.
The order of these choices is
\[
  (v,I,\Gamma,\cG,g_*,f_*,L_*,q,\vartheta)
  \quad\longrightarrow\quad\tau
  \quad\longrightarrow\quad K_0.
\]
All estimates used to choose $K_0$ are uniform after the preceding
parameters have been fixed.

\subsection{Diffusivity intervals and horizontal scales}

We assign diffusivity intervals to rescaled local dynamos as in
\cite[Section~4.1]{CSV25}. We also require integer axial frequencies,
which we choose together with the horizontal scales.

We choose $1<q<\sqrt{\delta_+/\delta_-}$. For an integer $K_0$
so large that $q+K_0^{-1}<\sqrt{\delta_+/\delta_-}$, we define
recursively
\[
  K_{n+1}=\lceil qK_n\rceil.
\]
Then
\begin{equation}
  q\leq\frac{K_{n+1}}{K_n}
  \leq q+\frac1{K_0}
  <\sqrt{\frac{\delta_+}{\delta_-}}.
  \label{eq:ratio-choice}
\end{equation}
The two bounds serve different purposes. First, the upper bound rules out gaps
between diffusivity intervals. Second, the lower bound seperates the the axial
shifts of distinct cells in the background estimate.
Every subsequent enlargement of $K_0$ preserves this inequality.
Set
\[
  \ell_n=\frac1{K_n}.
\]
The diffusivity intervals
\begin{equation}
  \cI_n
  =
  \left[
    \delta_-\frac1{K_n^2},
    \delta_+\frac1{K_n^2}
    \right]
  \label{eq:diffusivity-intervals}
\end{equation}
overlap with their neighbours. Indeed,
\[
  \delta_-\frac1{K_n^2}
  \leq
  \delta_+\frac1{K_{n+1}^2}
  \quad\Longleftrightarrow\quad
  \left(\frac{K_{n+1}}{K_n}\right)^2
  \leq\frac{\delta_+}{\delta_-},
\]
and the last inequality follows from \eqref{eq:ratio-choice}. Since the
intervals overlap and both endpoints tend to zero,
\[
  \bigcup_{n\geq0}\cI_n=(0,\eps_0],
  \qquad
  \eps_0=\delta_+\frac1{K_0^2}.
\]
Thus every $0<\eps\leq\eps_0 $ admits at least one index $n $ for
which $\delta_n:=\eps/\ell_n^2\in I $.

\subsection{Disjoint placement and added axial velocities}

We place the rescaled profiles in disjoint discs accumulating at one
point. A related assembly of shrinking velocity fields occurs in
\cite[Section~3.1]{ACM19}. Here each profile is surrounded by a larger
disc on which we add an axial velocity.

Put
\[
  R_n=\sqrt{\ell_n}
  =\sqrt{\frac1{K_n}}.
\]
The scale of the core is $\ell_n$, whereas $R_n$ is the radius used for
spatial localisation. We define the constant axial values by
\[
  a_n=\frac{\tau}{2\pi K_n}=\frac{\tau}{2\pi}\ell_n.
\]
Then $2\pi K_na_n=\tau$. The parameter $\tau>0$ will be fixed in
Proposition~\ref{prop:background-resolvent}, independently of $K_0$.
The relations
\[
  \frac{\ell_n}{R_n}=\sqrt{\ell_n},
  \qquad
  \frac{\eps}{R_n^2}=\delta_n\ell_n
\]
will make the multiplier and cutoff errors small.
Since
$K_n $ grows at least geometrically,
\[
  R_n\leq
  \sqrt{\frac1{K_0}}\,q^{-n/2},
  \qquad
  \sum_{n\geq0}R_n
  \leq
  \frac{K_0^{-1/2}}{1-q^{-1/2}},
  \qquad
  \frac{R_n}{\ell_n}
  =\sqrt{K_n}\longrightarrow\infty.
\]

For $K_0 $ sufficiently large,
\begin{equation}
  4\sum_{n\geq0}R_n+R_0<\frac14.
  \label{eq:chart-packing}
\end{equation}
We use the coordinate square
$(-\tfrac14,\tfrac14)\times(\tfrac14,\tfrac34)\subset\T^2 $;
all discs below lie in this fixed Euclidean chart.
Here $D(p,R) $ denotes a Euclidean disc in the chart, and the
corresponding subset of $\T^3 $ is $D(p,R)\times\T $.
We place the centres at
\[
  p_n=
  \left(4\sum_{m\geq n}R_m,\frac12\right).
\]
The first coordinate of every point in $D(p_n,R_n) $ lies between
\[
  4\sum_{m\geq n}R_m-R_n>0
  \quad\text{and}\quad
  4\sum_{m\geq0}R_m+R_0<\frac14.
\]
Its second coordinate lies in $(1/4,3/4) $ because
$R_n\leq R_0<1/4 $. Hence every buffer disc is contained in the stated
Euclidean chart. Moreover,
$\abs{p_n-p_{n+1}}=4R_n>R_n+R_{n+1} $. The centres are ordered on
a line, so summing the adjacent gaps shows that all discs are pairwise
disjoint. Finally, $p_n\to p_\infty=(0,\tfrac12) $, the unique
accumulation point in the horizontal torus $\T^2 $. The corresponding
accumulation set in $\T^3 $ is the circle $\{p_\infty\}\times\T $.

Fix $\vartheta\in\C^\infty_c([0,\infty))$ such that
\[
  0\leq\vartheta\leq1,\qquad
  \vartheta=1\ \hbox{on }[0,3/4],\qquad
  \supp\vartheta\subset[0,1).
\]
We make this choice before fixing $\tau$.
Define the local copy and the added axial velocity with its smooth cut-off by
\begin{align}
  u_n^{\rm loc}(y)
   & =
  \ell_n v\left(\frac{y-p_n}{\ell_n}\right),
  \label{eq:local-copy} \\
  u_n^{\rm ax}(y)
   & =
  a_n\vartheta\left(\frac{\abs{y-p_n}}{R_n}\right)e_3.
  \label{eq:axial-offset}
\end{align}
Because $\vartheta=1 $ on $[0,3/4] $,
$u_n^{\rm ax}=a_ne_3 $ throughout
$D(p_n,3R_n/4) $. This is the region where the added axial velocity
is constant; the axial component of the local copy may still vary there.
When $n $ is chosen for $\eps $, fields in its
fixed-$K_n $ Fourier subspace have the form
$B(y,x_3)=\e^{2\pi iK_nx_3}b(y) $. On $D(p_n,3R_n/4)$,
the transport operator associated with the added axial velocity acts by
\[
  -(a_ne_3\cdot\nabla)B
  =-2\pi iK_na_n B
  =-i\tau B.
\]
For the selected whole-plane model, we will extend this constant axial
velocity to all of $\R^2$. The resulting spectral shift is $-i\tau$;
see Lemma \ref{lem:exact-renormalization}.
Moreover,
\begin{align*}
  \norm{\nabla u_n^{\rm ax}}_\infty
   & \leq \frac{a_n}{R_n}\norm{\vartheta'}_\infty, \\
  \eta_{\rm ax}
   & :=
  \sup_n\frac{a_n}{R_n}\norm{\vartheta'}_\infty
  =
  \frac{\tau}{2\pi\sqrt{K_0}}\norm{\vartheta'}_\infty
  \longrightarrow0
  \qquad(K_0\to\infty).
\end{align*}
Indeed,
\[
  \frac{a_n}{R_n}
  =\frac{\tau}{2\pi\sqrt{K_n}},
\]
so its supremum is attained at $n=0 $. Thus $\eta_{\rm ax} $ bounds the
velocity gradient created where the added axial velocities are smoothly cut off.
This bound will be used for regularity of the velocity and its particle
flow. The horizontal operator involves the axial velocity only
as a purely imaginary multiplier.
We choose $R_v\geq L_*$ with
$\supp v\cup\supp\nabla_\perp v\subset D(0,R_v) $.
Since $\ell_n/R_n=\sqrt{\ell_n}\to0 $, the same final choice of $K_0 $
ensures that
\begin{equation}
  R_v\ell_n<R_n/10
  \qquad\text{for every }n.
  \label{eq:local-support-in-buffer}
\end{equation}
Consequently,
\[
  \supp u_n^{\rm loc}\subset D(p_n,R_n/10),
  \qquad
  \supp u_n^{\rm ax}\subset D(p_n,R_n),
  \qquad
  u_n^{\rm ax}=a_ne_3\quad\text{on }\supp u_n^{\rm loc}.
\]
The local-flow core is the region where the rescaled local transport
and stretching coefficients may be non-zero. The region
$D(p_n,3R_n/4)\setminus\supp u_n^{\rm loc}$ is the
constant-coefficient buffer: the local flow has vanished there,
while the added axial velocity remains equal to $a_ne_3$.
The buffer disc $D(p_n,R_n)$ contains the local-flow core, this
constant-coefficient buffer, and the outer annulus where the added
axial velocity is smoothly cut off.
In particular, the combined supports lie in the pairwise disjoint discs
$D(p_n,R_n) $, so at each point at most one summand below is non-zero.
Define
\begin{equation}
  u(y,x_3)
  =
  \sum_{n\geq0}
  \bigl(u_n^{\rm loc}(y)+u_n^{\rm ax}(y)\bigr).
  \label{eq:global-u}
\end{equation}

The placement of the cells and the geometry inside one cell are summarised
in Figure~\ref{fig:multiscale-geometry}.
\begin{figure}[t]
  \centering
  \begin{tikzpicture}[x=1cm,y=1cm,font=\small]
    \draw[thick] (0,0) rectangle (4.4,4.4);
    \node[anchor=north west,font=\bfseries] at (0.18,4.20) {$\T^2$};

    \draw (3.48,2.05) circle (0.54);
    \draw (2.48,2.05) circle (0.36);
    \draw (1.75,2.05) circle (0.24);
    \draw (1.18,2.05) circle (0.15);
    \fill (0.72,2.05) circle (0.035);

    \node (cellzero) at (3.48,0.80) {$D(p_0,R_0) $};
    \draw[->] (cellzero.north) -- (3.48,1.49);

    \node (limitpoint) at (0.72,1.28) {$p_\infty $};
    \draw[->] (limitpoint.north) -- (0.72,1.99);

    \begin{scope}[xshift=8.20cm,yshift=2.20cm]
      \fill[blue!7] (0,0) circle (1.55);
      \fill[blue!18] (0,0) circle (1.1625);

      \draw[thick] (0,0) circle (1.55);
      \draw[dashed] (0,0) circle (1.1625);
      \draw[thick] (0,0) circle (0.24);

      \node[font=\bfseries] at (0,1.93) {one cell};

      \node[anchor=east,align=right] (core) at (-1.38,0.92)
      {local-flow core\\[-1pt]
        \scriptsize scale $\ell_n $};
      \draw[->] (core.east) -- (-0.20,0.07);

      \node[anchor=west,align=left] (axial) at (1.85,0.72)
      {added axial velocity $u_n^{\rm ax} $\\[-1pt]
        \scriptsize $=a_ne_3 $ throughout the dashed\\[-1pt]
        \scriptsize disc smoothly cut off\\[-1pt]
        \scriptsize in the outer annulus};
      \draw[->] (axial.west) -- (0.65,0.58);
      \draw[->] (axial.south west) -- (1.33,-0.30);

      \draw[|-|,thick] (0,-1.75) -- (1.55,-1.75);
      \node[anchor=north] (radius) at (0.775,-2.12)
      {buffer radius $R_n $};
    \end{scope}
  \end{tikzpicture}

  \caption{Schematic horizontal geometry, not drawn to scale. In the model,
    the discs $D(p_n,R_n) $ shrink and accumulate at $p_\infty $; in
    $\T^3 $, the corresponding accumulation set is the axial circle
    $\{p_\infty\}\times\T $. In each cell, the local-flow core has scale
    $\ell_n $. The added axial velocity equals $a_ne_3 $ throughout
    $D(p_n,3R_n/4) $, including the local-flow core and the surrounding
    constant-coefficient buffer, and is smoothly cut off before the boundary
    of $D(p_n,R_n) $.}
  \label{fig:multiscale-geometry}
\end{figure}

\begin{lemma}[Regularity and incompressibility of the global velocity]
  \label{lem:global-velocity}
  The field $u $ in \eqref{eq:global-u} is a non-zero real-valued autonomous
  vector field belonging to $\W^{1,\infty}(\T^3;\R^3) $. It is
  independent of $x_3 $, and $\Div u=0 $. Moreover, $u $ is smooth
  away from $\{p_\infty\}\times\T $ and is differentiable at every point
  of this circle, with derivative zero.
\end{lemma}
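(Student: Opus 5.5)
We prove the lemma summand by summand and then pass to the sum, using the disjointness of the discs $D(p_n,R_n)$. The velocity is real and independent of $x_3$ by construction. It is non-zero because $u=u_0^{\rm loc}+u_0^{\rm ax}$ on $D(p_0,R_0)$, and $u_0^{\rm ax}=a_0e_3\neq0$ near $p_0$.

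\textbf{Incompressibility.} The horizontal part of $u_n^{\rm loc}$ is $(v_h)_\perp$ rescaled, and $\Div_\perp(v_h)_\perp=0$. The axial parts depend only on $y$, so $\partial_3u_3=0$. Hence each summand is divergence-free. Since the supports of the summands lie in disjoint closed discs that accumulate only at $p_\infty$, $\Div u=0$ pointwise away from the circle $\{p_\infty\}\times\T$. To obtain the distributional statement on all of $\T^3$, I would show $u\in\W^{1,\infty}$ below; then $\Div u$ is an $\Lp^\infty$ function that vanishes almost everywhere, because the circle has measure zero.

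\textbf{Smoothness and Lipschitz bound.} Away from the circle, each point has a neighbourhood meeting at most one disc $\overline{D(p_n,R_n)}$ (or none), since the centres are ordered on a line with gaps $4R_n>R_n+R_{n+1}$ and $R_n\to0$. There $u$ coincides with a single smooth summand, or with zero. For the derivatives, scaling gives
\[
\nabla u_n^{\rm loc}(y)=(\nabla v)\bigl((y-p_n)/\ell_n\bigr),
\]
so $\norm{\nabla u_n^{\rm loc}}_\infty=\norm{\nabla v}_\infty$, while $\norm{\nabla u_n^{\rm ax}}_\infty\le\eta_{\rm ax}$. Thus $\nabla u$ is uniformly bounded off the circle. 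Since the circle is a null set and $u$ is continuous (checked next), this yields a Lipschitz bound. Concretely, I would argue that $u$ is locally Lipschitz on segments avoiding the circle, and that any segment can be perturbed to avoid it, using continuity of $u$. This gives $u\in\W^{1,\infty}$.

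\textbf{Behaviour at the circle.} This is the main point. We have $u(p_\infty)=0$, because $p_\infty$ lies in no disc. For $y$ near $p_\infty$, either $u(y)=0$ or $y\in D(p_n,R_n)$ for some $n$. In the latter case,
\[
|u(y)|\le \ell_n\norm v_\infty+a_n\le C\ell_n=CR_n^2 .
\]
On the other hand, the first coordinate of $y$ satisfies $y_1\ge 4\sum_{m\ge n}R_m-R_n\ge 3R_n$, so $|y-p_\infty|\ge 3R_n$. Consequently
\[
|u(y)|\le CR_n^2\le C'R_n\,|y-p_\infty|\le C''|y-p_\infty|^{2}.
\]
Since $|y-p_\infty|\to0$ forces $n\to\infty$ and hence $R_n\to0$, this gives $|u(y)|=o(|y-p_\infty|)$ uniformly in $x_3$. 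So $u$ is differentiable at every point of the circle, with derivative zero, and it is also continuous there.

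The derivative is not continuous at the circle, because $\nabla u(p_n)=\nabla v(0)\neq0$ for all $n$; this is consistent with the lemma, which does not claim $\C^1$. The only delicate step is the comparison between $\ell_n=R_n^2$ and the distance to $p_\infty$. The choice $R_n=\sqrt{\ell_n}$ makes it work.
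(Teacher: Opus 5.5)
Your argument is correct and is essentially the paper's: you use the same summand-wise divergence and gradient bounds, local finiteness of the supports away from $\{p_\infty\}\times\T$, and the decisive estimate $\abs{u(y)}\leq C\ell_n=CR_n^2\leq C\abs{y-p_\infty}^2$ from $\abs{y-p_\infty}\geq3R_n$. The only variation is the Lipschitz step: you perturb segments off the circle, whereas the paper, having shown differentiability everywhere, applies the mean-value theorem to the periodic lift directly. One small repair is needed in the non-vanishing claim: $u_0^{\rm ax}\neq0$ near $p_0$ does not by itself give $u\neq0$, because $u_0^{\rm loc}$ is also present there (indeed $u(p_0)=\ell_0(\tau-\Omega)e_3/(2\pi)$), so point instead to the buffer $D(p_0,3R_0/4)\setminus\supp u_0^{\rm loc}$, where $u=a_0e_3$.
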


\begin{proof}
  Since $\Div_\perp v_\perp=0 $, every rescaled local copy is
  divergence-free. Each added axial velocity is independent of $x_3 $, and hence
  is divergence-free as well. Their combined supports are pairwise
  disjoint. Moreover,
  \[
    \norm{\nabla u_n^{\rm loc}}_\infty\leq\norm{\nabla v}_\infty,
    \qquad
    \norm{\nabla u_n^{\rm ax}}_\infty\leq\eta_{\rm ax}.
  \]
  The field is non-zero: on the nonempty set
  \[
    D(p_n,3R_n/4)\setminus\supp u_n^{\rm loc}
  \]
  it equals $a_ne_3\neq0 $.
  Directly from the definitions,
  \[
    \norm{u_n^{\rm loc}}_\infty
    \leq\ell_n\norm v_\infty,
    \qquad
    \norm{u_n^{\rm ax}}_\infty
    \leq a_n
    =\frac{\tau}{2\pi}\ell_n.
  \]
  Away from $p_\infty $ the family of supports is locally finite, so the
  sum is smooth there. At the accumulation circle we observe the following.
  If $y\in D(p_n,R_n) $, then
  \[
    \abs{y-p_\infty}
    \geq \abs{p_n-p_\infty}-R_n
    =4\sum_{m\geq n}R_m-R_n
    \geq3R_n.
  \]
  Since $\ell_n=R_n^2 $, the amplitude bounds give
  \[
    \abs{u(y,x_3)}\leq C\ell_n
    =CR_n^2\leq C\abs{y-p_\infty}^2.
  \]
  Outside the buffer discs the left-hand side vanishes. Thus, in a local
  product chart around every $(p_\infty,s) $,
  \[
    \abs{u(y,x_3)-u(p_\infty,s)}
    \leq C\abs{(y,x_3)-(p_\infty,s)}^2.
  \]
  Hence the velocity $u $ is differentiable at every point of the accumulation circle,
  and its derivative is zero there. In particular, it is continuous at these points uniformly
  in $x_3 $.

  Thus $u$ is differentiable everywhere, with
  $\norm{Du}\leq\norm{\nabla v}_\infty+\eta_{\rm ax}$.
  Applying the mean-value theorem to its periodic lift, we deduce that
  $u$ is globally Lipschitz with this bound. Moreover, its classical
  divergence vanishes everywhere. Since the weak derivative of a
  Lipschitz function agrees almost everywhere with its classical
  derivative, we conclude that $u\in\W^{1,\infty}$ and
  $\Div u=0$ in distributions. This completes the proof.
\end{proof}

\subsection{Why the construction stops at Lipschitz regularity}
\label{sec:lipschitz-obstruction}

The local velocity field $v $ is smooth. Regularity is lost only because the
global field has nonvanishing gradients at infinitely many shrinking
horizontal scales. The estimate in the proof of
Lemma~\ref{lem:global-velocity} shows that $u $ is nevertheless differentiable
on the accumulation circle and
\[
  Du(p_\infty,x_3)=0
  \qquad(x_3\in\T).
\]
Both cutoffs are constant near each cell centre. Hence
\[
  Du(p_n,x_3)=
  \begin{pmatrix}
    0      & -\Omega & 0 \\
    \Omega & 0       & 0 \\
    0      & 0       & 0
  \end{pmatrix}\neq0.
\]
Since $p_n\to p_\infty$, the derivative $Du$ is discontinuous at every
point of $\{p_\infty\}\times\T$.

A continuous representative of the weak gradient would agree everywhere
with $Du$. Hence no such representative exists.

\section{Particle dynamics and absence of exponential stretching}
\label{sec:particle}
The particle dynamics are solvable: within each cell, a particle
keeps its distance from the cell centre, while its angular and axial
coordinates evolve at constant speeds depending only on that distance. Formula
\eqref{eq:explicit-particle-flow} below gives the flow.

Let $\Phi_t:\T^3\to\T^3 $ denote the unique particle flow generated by
$u $:
\[
  \frac{\dd}{\dd t}\Phi_t(x)=u(\Phi_t(x)),
  \qquad
  \Phi_0(x)=x.
\]
The existence and uniqueness of this flow for every $t\in\R $ follow from the global
Lipschitz regularity provided in Lemma~\ref{lem:global-velocity}. The special form
of the velocity in each cell allows us to say even more.

\begin{proposition}[Explicitly solvable particle dynamics and zero entropy]
  \label{prop:particle-flow}
  Let $u $ be the velocity field in \eqref{eq:global-u}. There is a
  constant $C_{\rm L}\geq1 $ such that
  \begin{equation}
    \operatorname{Lip}(\Phi_t)
    +\operatorname{Lip}(\Phi_t^{-1})
    \leq C_{\rm L}(1+\abs t)
    \qquad(t\in\R).
    \label{eq:linear-flow-lipschitz}
  \end{equation}
  In particular,
  \[
    \lim_{\abs t\to\infty}
    \frac1{\abs t}\log\operatorname{Lip}(\Phi_t)=0,
  \]
  and every time map has zero topological entropy:
  \[
    h_{\mathrm{top}}(\Phi_t)=0
    \qquad(t\in\R).
  \]

  Using the profile in equation \eqref{eq:smooth-local-velocity}, we
  write the particle flow explicitly. Let $\mathsf R_\alpha$ denote
  rotation through angle
  $\alpha $ in the horizontal plane. For the $n $-th cell set
  \begin{equation}
    \omega_n(r)=\Omega\chi_h\left(\frac r{\ell_n}\right),
    \qquad
    g_n(r)=\ell_nU\chi_h\left(\frac r{\ell_n}\right)
    +a_n\vartheta\left(\frac r{R_n}\right).
    \label{eq:particle-cell-profiles}
  \end{equation}
  If $y\in D(p_n,R_n) $, $r=\abs{y-p_n} $, and $x_3\in\T $, then
  \begin{equation}
    \Phi_t(y,x_3)
    =
    \left(
    p_n+\mathsf R_{t\omega_n(r)}(y-p_n),
    \ x_3+t g_n(r)
    \right),
    \label{eq:explicit-particle-flow}
  \end{equation}
  where the second component is understood modulo $1 $. At $y=p_n $,
  the horizontal component is $p_n $, while the axial component
  translates at speed $g_n(0)=\ell_nU+a_n $. Every point outside the
  buffer discs, on their boundaries, and of the
  accumulation circle $\{p_\infty\}\times\T $ is fixed.
\end{proposition}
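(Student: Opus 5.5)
We begin with the explicit formula, then derive the Lipschitz bound from it, and deduce zero entropy from the Lipschitz bound.

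\emph{Explicit flow.} Inside $D(p_n,R_n)$ only the $n$-th summand is non-zero. By \eqref{eq:smooth-local-velocity},
\[
  u(y,x_3)=\bigl(\omega_n(r)(y-p_n)^\perp,\ g_n(r)\bigr),
  \qquad r=\abs{y-p_n},
\]
with $\omega_n,g_n$ as in \eqref{eq:particle-cell-profiles}; here we use that $\chi_h(\abs Y)(-\Omega Y_2,\Omega Y_1)$ rescales to $\Omega\chi_h(r/\ell_n)(y-p_n)^\perp$. The horizontal part is tangent to circles about $p_n$, so $r$ is conserved along trajectories. Consequently the right-hand side of \eqref{eq:explicit-particle-flow} solves the ODE, with constant angular speed $\omega_n(r)$ and axial speed $g_n(r)$. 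Uniqueness for Lipschitz fields identifies it with $\Phi_t$. Outside the open discs, and on the accumulation circle, $u$ vanishes (by Lemma~\ref{lem:global-velocity}, and because the cut-offs vanish at $r=R_n$), so those points are fixed. This also shows that each closed disc times $\T$ is invariant.

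\emph{Lipschitz bound.} Differentiate \eqref{eq:explicit-particle-flow} inside a cell. Writing $y-p_n=r e_r$, the derivative of the horizontal map is the rotation $\mathsf R_{t\omega_n}$ plus the rank-one term
\[
  t\,\omega_n'(r)\,\mathsf R_{t\omega_n}(y-p_n)^\perp\otimes e_r,
\]
whose norm is $\abs t\,r\abs{\omega_n'(r)}$. The axial row contributes $t\,g_n'(r)\,e_r$. We therefore need uniform bounds on $r\abs{\omega_n'}$ and $\abs{g_n'}$. These hold:
\[
  r\abs{\omega_n'(r)}=\Omega\,\frac r{\ell_n}\,\abs{\chi_h'(r/\ell_n)}\leq 2\Omega\norm{\chi_h'}_\infty,
\]
since $\chi_h'$ is supported in $[0,2]$, and
\[
  \abs{g_n'}\leq\abs U\norm{\chi_h'}_\infty+\frac{a_n}{R_n}\norm{\vartheta'}_\infty\leq C.
\]
Hence $\norm{D\Phi_t}\leq C(1+\abs t)$ almost everywhere, and $\Phi_t$ is the identity outside the cells. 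To pass from pointwise derivative bounds to a global Lipschitz bound, note that $\Phi_t$ is a homeomorphism that is smooth away from the circle $\{p_\infty\}\times\T$. For two points, take the straight segment between them in the lift. It meets that circle in at most one point, or we perturb it slightly to avoid it, and integrating $D\Phi_t$ along it gives the bound. Continuity of $\Phi_t$ makes the perturbation harmless. The inverse satisfies $\Phi_t^{-1}=\Phi_{-t}$, so the same bound applies, which proves \eqref{eq:linear-flow-lipschitz}. The limit statement for $\frac1{\abs t}\log\operatorname{Lip}(\Phi_t)$ follows immediately.

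\emph{Zero entropy.} Use the standard bound $h_{\mathrm{top}}(F)\leq\limsup_k \frac1k\log\operatorname{Lip}(F^k)\cdot\dim$. More concretely, a Lipschitz map $F$ satisfies $h_{\mathrm{top}}(F)\leq 3\log^+\operatorname{Lip}(F)$ on a $3$-dimensional manifold, by counting $(k,\eps)$-spanning sets: a grid of mesh $\eps\,\operatorname{Lip}(F)^{-k}$ has about $(\operatorname{Lip}(F)^{k}/\eps)^3$ points. Apply this to $F^k$ with $F=\Phi_t$ and use $h(F^k)=k\,h(F)$:
\[
  k\,h(\Phi_t)=h(\Phi_{kt})\leq 3\log\bigl(C_{\rm L}(1+k\abs t)\bigr).
\]
Dividing by $k$ and letting $k\to\infty$ gives $h_{\mathrm{top}}(\Phi_t)=0$. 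The case $t=0$ is trivial.

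The main obstacle is the Lipschitz step across the accumulation circle and the cell boundaries, where the flow is only piecewise described. The segment argument together with the uniform derivative bound handles it, because the bound is uniform in $n$ and $\Phi_t$ is continuous everywhere.
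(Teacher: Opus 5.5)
Your proposal is correct and follows the paper's proof essentially step by step. The shared steps are the explicit cell formula from conservation of $r$, the uniform bounds on $r\abs{\omega_n'}$ and $\abs{g_n'}$, the inverse via $\Phi_{-t}$, and the entropy estimate $k\,h_{\mathrm{top}}(\Phi_t)=h_{\mathrm{top}}(\Phi_{kt})\leq 3\log\bigl(C_{\rm L}(1+k\abs t)\bigr)$. The one variation is how you pass to a global Lipschitz constant: the paper uses that the lift of $\Phi_t$ is already Lipschitz, with a.e.\ derivative given by the cell formula or the identity, whereas you integrate $D\Phi_t$ along segments that avoid the accumulation circle, where $\Phi_t$ is smooth, and conclude by continuity. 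Both arguments are valid.
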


\begin{proof}
  Substitution of \eqref{eq:smooth-local-velocity} into
  \eqref{eq:local-copy} shows that in the $n $-th disc
  \[
    u(y,x_3)
    =\left(\omega_n(r)(y-p_n)^\perp,g_n(r)\right),
    \qquad r=\abs{y-p_n}.
  \]
  Along a particle trajectory,
  \[
    \frac{\dd}{\dd t}\abs{y-p_n}^2
    =2\omega_n(r)(y-p_n)\cdot(y-p_n)^\perp=0.
  \]
  Thus $r $ is invariant, and the polar angle in the horizontal plane
  and the axial coordinate evolve at the constant speeds
  $\omega_n(r) $ and $g_n(r) $,
  respectively. This proves \eqref{eq:explicit-particle-flow}. It also
  shows that no trajectory crosses a cell boundary. The profiles vanish
  in a neighbourhood of $\partial D(p_n,R_n) $, so the formula agrees
  smoothly with the identity there.

  The velocity vanishes outside of the buffer discs. Hence, the flow is the identity there.
  In particular, every point of the accumulation circle $\{p_\infty\}\times\T$ is fixed by the flow.

  We next prove that
  \[
    \norm{D\Phi_t}\leq C(1+\abs t)
  \]
  in each cell, with a constant $C$ independent of the cell index $n$. For this purpose, we differentiate equation \eqref{eq:explicit-particle-flow}.

  Put $z=y-p_n $, and for $r>0 $ write $e_r=z/r $. Let
  $\mathsf Jz=z^\perp $ denote rotation by $90^\circ $. Then
  \[
    D_y\!\left(\mathsf R_{t\omega_n(r)}z\right)
    =\mathsf R_{t\omega_n(r)}
    \left[\id_{\R^2}+t\omega_n'(r)\mathsf Jz\otimes e_r\right].
  \]
  The profiles are constant near $r=0 $, so this formula has its smooth
  limiting interpretation there. Moreover,
  \begin{align}
    \sup_{n\geq0}\sup_{r\geq0}r\abs{\omega_n'(r)}
     & \leq
    \abs\Omega\sup_{s\geq0}s\abs{\chi_h'(s)},
    \label{eq:omega-shear-bound} \\
    \sup_{n\geq0}\sup_{r\geq0}\abs{g_n'(r)}
     & \leq
    \abs U\norm{\chi_h'}_\infty+\eta_{\rm ax}.
    \label{eq:axial-shear-bound}
  \end{align}
  Hence the differential of the cell formula satisfies
  \[
    \norm{D\Phi_t}\leq C(1+\abs t)
  \]
  with a constant independent of $n $.

  By Lemma \ref{lem:global-velocity}, the periodic lift of $\Phi_t$
  is globally Lipschitz for every fixed $t$. Its derivative is given
  by the cell formula on each disc and equals the identity almost
  everywhere outside their union. Hence the preceding uniform bound
  gives
  \[
    \operatorname{Lip}(\Phi_t)\leq C(1+\abs t).
  \]
  Applying the same estimate at $-t$ and using
  $\Phi_t^{-1}=\Phi_{-t}$, we obtain
  equation \eqref{eq:linear-flow-lipschitz}, after increasing the
  constant.

  It remains to prove that every time map has zero topological entropy.
  Fix $s\in\R$ and an integer $m\geq1$. Applying the entropy formula
  for iterates and the Lipschitz entropy bound from
  \cite[Section~2.5f(3) and Proposition~2.5.5]{HasselblattKatok02}, we obtain
  \[
    m h_{\mathrm{top}}(\Phi_s)
    =h_{\mathrm{top}}(\Phi_{ms})
    \leq3\log\bigl(C_{\rm L}(1+m\abs s)\bigr),
  \]
  where we have used equation \eqref{eq:linear-flow-lipschitz}.
  Dividing by $m$ and letting $m\to\infty$, we conclude that
  $h_{\mathrm{top}}(\Phi_s)=0$. This completes the proof.
\end{proof}

\begin{corollary}[No exponential growth for ideal induction]
  \label{cor:ideal-growth}
  For the velocity field $u $ in \eqref{eq:global-u}, let
  $\mathcal T_0(t) $ denote the solution group on
  $\Lp^2_\sigma(\T^3) $ for the ideal induction equation
  \[
    \partial_tB+(u\cdot\nabla)B-(B\cdot\nabla)u=0,
    \qquad
    \Div B=0.
  \]
  Then
  \begin{equation}
    \frac{1}{C_{\rm L}(1+\abs t)}
    \leq
    \norm{\mathcal T_0(t)}_{\Lp^2_\sigma\to\Lp^2_\sigma}
    \leq
    C_{\rm L}(1+\abs t)
    \qquad(t\in\R),
    \label{eq:ideal-group-growth}
  \end{equation}
  where $C_{\rm L} $ is the constant in
  \eqref{eq:linear-flow-lipschitz}. Consequently the ideal induction
  equation has zero exponential growth rate in operator norm:
  \begin{equation}
    \gamma_{\rm id}(u)
    :=
    \lim_{t\to\infty}
    \frac1t
    \log
    \norm{\mathcal T_0(t)}_{\Lp^2_\sigma\to\Lp^2_\sigma}
    =0.
    \label{eq:zero-ideal-growth}
  \end{equation}
  Moreover, for every $B_{\rm in}\in\Lp^2_\sigma(\T^3)$ and $t\in\R$,
  \[
    \frac{\norm{B_{\rm in}}_2}{C_{\rm L}(1+\abs t)}
    \leq\norm{\mathcal T_0(t)B_{\rm in}}_2
    \leq C_{\rm L}(1+\abs t)\norm{B_{\rm in}}_2.
  \]
  In particular, every non-zero ideal solution has exponential growth
  rate zero.
\end{corollary}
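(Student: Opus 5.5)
We will use the Cauchy formula for ideal induction. For Lipschitz divergence-free $u$ the ideal solution is the pushforward of the initial field by the flow:
\[
  \bigl(\mathcal T_0(t)B_{\rm in}\bigr)(\Phi_t(x))=D\Phi_t(x)\,B_{\rm in}(x)
  \qquad\text{for a.e. }x.
\]

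\emph{Step 1: the Cauchy formula defines the group.} We check this first for smooth $B_{\rm in}$, working cell by cell with the explicit flow \eqref{eq:explicit-particle-flow}. Away from the accumulation circle the velocity is smooth, so the classical computation applies: $\partial_t(D\Phi_t)=Du(\Phi_t)D\Phi_t$ shows that the right-hand side solves the transported equation $\partial_tB+(u\cdot\nabla)B=(B\cdot\nabla)u$. The exceptional set $\{p_\infty\}\times\T$ is a null set, so it does not affect the distributional identity. We then pass to general $B_{\rm in}\in\Lp^2_\sigma$ by density, using a uniform bound. Divergence-freeness is preserved because pushforward of vector fields by volume-preserving diffeomorphisms commutes with the divergence. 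In the bi-Lipschitz setting this commutation can be checked in weak form against test functions, using $\det D\Phi_t=1$. We identify $\mathcal T_0(t)$ with this operator, since it is the group generated by the ideal operator on $\Lp^2_\sigma$ as in Appendix \ref{sec:semigroup-growth}. Uniqueness of weak solutions in $\Lp^2$ for Lipschitz $u$ (DiPerna--Lions) makes the identification legitimate.

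\emph{Step 2: the upper bound.} Since $\Phi_t$ preserves Lebesgue measure, a change of variables gives
\[
  \norm{\mathcal T_0(t)B_{\rm in}}_2^2=\int\abs{D\Phi_t(x)B_{\rm in}(x)}^2\dd x
  \leq\norm{D\Phi_t}_\infty^2\norm{B_{\rm in}}_2^2 .
\]
We use $\norm{D\Phi_t}_\infty\leq\operatorname{Lip}(\Phi_t)\leq C_{\rm L}(1+\abs t)$ from Proposition \ref{prop:particle-flow}. This gives the upper bound in \eqref{eq:ideal-group-growth}.

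\emph{Step 3: the lower bound.} We apply the upper bound to the inverse $\mathcal T_0(t)^{-1}=\mathcal T_0(-t)$. This gives
\[
  \norm{B_{\rm in}}_2\leq C_{\rm L}(1+\abs t)\,\norm{\mathcal T_0(t)B_{\rm in}}_2 ,
\]
which is the pointwise lower bound for every solution. Taking the supremum over $B_{\rm in}$ gives the lower operator bound, since $\norm{\mathcal T_0(t)}\ge 1/\norm{\mathcal T_0(-t)}$.

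\emph{Step 4: growth rates.} Taking logarithms and dividing by $t$ gives \eqref{eq:zero-ideal-growth}, and also the zero exponential rate of each non-zero solution.

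The main obstacle is Step 1: justifying rigorously that the Cauchy formula gives the $\Lp^2_\sigma$ ideal group at merely Lipschitz regularity. I would rely on the renormalization/uniqueness theory for Lipschitz transport together with the explicit smooth cell formula. The remaining steps are direct consequences of Proposition \ref{prop:particle-flow}.
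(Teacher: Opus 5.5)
Your route is the paper's. Steps 2--4 match the end of its proof: the Cauchy formula with measure preservation and $\operatorname{Lip}(\Phi_t)\le C_{\rm L}(1+\abs t)$ gives the upper bound, $\mathcal T_0(-t)\mathcal T_0(t)=\id$ gives the lower bound, and you finish by taking logarithms. The differences are all in Step 1, and one inference there does not hold as written.

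A null set can carry part of a distributional identity; a jump across a surface is the standard example. So ``the circle is a null set'' does not by itself take you from the classical equation on $\T^3\setminus(\{p_\infty\}\times\T)$ to the weak equation on $\T^3$. What makes it work here is that the circle has codimension two and, for smooth $B_{\rm in}$, both $B(t)$ and $u$ are bounded. Test with $\varphi(1-\eta_\delta)$, where $\eta_\delta$ is a horizontal cut-off of radius $\delta$ around the circle, and let $\delta\to0$. All terms converge by dominated convergence except $\inner{B}{\varphi\,(u\cdot\nabla)\eta_\delta}$, which is $O(\delta)$ because $\norm{\nabla\eta_\delta}_{\Lp^1}=O(\delta)$. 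You need this argument, or an equivalent one, for Step 1 to be a proof.

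The paper avoids the exceptional set altogether. It differentiates $\int D\Phi_t(a)B_{\rm in}(a)\cdot\overline{\varphi(\Phi_t(a))}\dd a$ directly, using the variational equation $D\Phi_t(a)=\id+\int_0^t(\nabla u)(\Phi_s(a))D\Phi_s(a)\dd s$. This equation holds for a.e.\ $a$ by Rademacher's theorem and the chain rule, so the weak form follows on the whole torus at once.

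Two smaller points. First, the appendix treats only $\eps>0$, so it does not identify the ideal group. The identification has to rest on the uniqueness argument you invoke. The paper uses the characteristic formula of Ambrosio--Crippa plus Gronwall, which serves the same purpose as your DiPerna--Lions commutator argument. Second, Step 3 uses $\mathcal T_0(t)^{-1}=\mathcal T_0(-t)$, which you should derive rather than assume. You can get it from uniqueness, or, as the paper does, from $D\Phi_{t+s}(a)=D\Phi_t(\Phi_s(a))D\Phi_s(a)$ for a.e.\ $a$.
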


\begin{proof}
  We derive the Cauchy formula directly from the Lipschitz flow. Work with
  periodic lifts of $u $ and $\Phi_t $. The Cauchy--Lipschitz theorem,
  Rademacher's theorem, and the chain rule give
  \begin{equation}
    D\Phi_t(a)
    =\id+\int_0^t
    (\nabla u)(\Phi_s(a))D\Phi_s(a)\dd s
    \label{eq:flow-variational-equation}
  \end{equation}
  for almost every $a $. These facts are recalled in
  \cite[Section~2]{AmbrosioCrippa14}; see also the classical Cauchy formula in
  \cite[Chapter~V, Section~1.B]{AK98}. Liouville's formula and
  $\Div u=0 $ give $\det D\Phi_t=1 $ almost everywhere. Since $\Phi_t $
  is bi-Lipschitz, the area formula shows that it preserves Lebesgue measure.

  For $B_{\rm in}\in\Lp^2_\sigma(\T^3) $, define
  \begin{equation}
    \bigl(\mathcal T_0(t)B_{\rm in}\bigr)(\Phi_t(a))
    =
    D\Phi_t(a)B_{\rm in}(a)
    \label{eq:ideal-cauchy-formula}
  \end{equation}
  for almost every $a $. This defines an $\Lp^2 $ field, and measure
  preservation gives
  \[
    \norm{\mathcal T_0(t)B_{\rm in}}_2
    \leq
    \operatorname{Lip}(\Phi_t)\norm{B_{\rm in}}_2.
  \]

  The formula preserves the divergence constraint. Indeed, if
  $\psi\in\C^\infty(\T^3) $, then
  \begin{align*}
    \int_{\T^3}\mathcal T_0(t)B_{\rm in}\cdot\nabla\psi\dd x
     & =\int_{\T^3}D\Phi_t(a)B_{\rm in}(a)
    \cdot\nabla\psi(\Phi_t(a))\dd a        \\
     & =\int_{\T^3}B_{\rm in}(a)
    \cdot\nabla(\psi\circ\Phi_t)(a)\dd a=0.
  \end{align*}
  Here $\psi\circ\Phi_t\in\Hs^1 $, and the distributional divergence
  identity extends to such tests by density.

  Formula \eqref{eq:ideal-cauchy-formula} solves the ideal induction
  equation. Let $\varphi\in\C^\infty(\T^3;\CC^3) $. By measure
  preservation,
  \[
    \inner{\mathcal T_0(t)B_{\rm in}}{\varphi}
    =\int_{\T^3}D\Phi_t(a)B_{\rm in}(a)
    \cdot\overline{\varphi(\Phi_t(a))}\dd a.
  \]
  Differentiating and using \eqref{eq:flow-variational-equation} gives
  \[
    \frac{\dd}{\dd t}
    \inner{\mathcal T_0(t)B_{\rm in}}{\varphi}
    =\inner{(\mathcal T_0(t)B_{\rm in}\cdot\nabla)u}{\varphi}
    +\inner{\mathcal T_0(t)B_{\rm in}}
    {(u\cdot\nabla)\varphi}
  \]
  for almost every $t $, which is the weak form of the equation.

  The flow law and the chain rule give
  \[
    D\Phi_{t+s}(a)
    =D\Phi_t(\Phi_s(a))D\Phi_s(a)
  \]
  almost everywhere. Hence
  $\mathcal T_0(t+s)=\mathcal T_0(t)\mathcal T_0(s) $ and
  $\mathcal T_0(-t)=\mathcal T_0(t)^{-1} $.

  Uniqueness follows from the same characteristic argument. If
  $W\in\C([-T,T];\Lp^2) $ is a distributional solution with $W(0)=0 $,
  then \cite[Proposition~2.3]{AmbrosioCrippa14}, applied componentwise after
  periodic extension, gives
  \[
    W(t,\Phi_t(a))
    =\int_0^t
    (\nabla u)(\Phi_s(a))W(s,\Phi_s(a))\dd s
  \]
  for almost every $a $. Gronwall's inequality, forward and backward in
  time, gives $W=0 $.

  It remains to check strong continuity. Put
  $L=\norm{\nabla u}_\infty $. The flow equation and
  \eqref{eq:flow-variational-equation} imply
  \[
    \norm{\Phi_t-\id}_\infty\leq\norm u_\infty\abs t,
    \qquad
    \norm{D\Phi_t-\id}_{\Lp^\infty}\leq\e^{L\abs t}-1.
  \]
  For smooth $B $, measure preservation and
  \eqref{eq:ideal-cauchy-formula} therefore give
  \[
    \norm{\mathcal T_0(t)B-B}_2^2
    =\int_{\T^3}
    \abs{D\Phi_t(a)B(a)-B(\Phi_t(a))}^2\dd a
    \longrightarrow0.
  \]
  Density and the operator bound near zero extend the convergence to every
  $B\in\Lp^2_\sigma $. Thus $\mathcal T_0(t) $ is the solution group on
  $\Lp^2_\sigma $.

  Finally, equation \eqref{eq:ideal-cauchy-formula} and
  Proposition \ref{prop:particle-flow} give
  \[
    \norm{\mathcal T_0(t)B_{\rm in}}_2
    \leq C_{\rm L}(1+\abs t)\norm{B_{\rm in}}_2.
  \]
  Since $\mathcal T_0(-t)\mathcal T_0(t)=\id$, we also have
  \[
    \norm{B_{\rm in}}_2
    =\norm{\mathcal T_0(-t)\mathcal T_0(t)B_{\rm in}}_2
    \leq C_{\rm L}(1+\abs t)
    \norm{\mathcal T_0(t)B_{\rm in}}_2.
  \]
  These are the stated bounds for each ideal solution. Taking the
  supremum over unit initial data gives
  equation \eqref{eq:ideal-group-growth}. Taking logarithms and
  dividing by $t$ proves both zero-growth conclusions as
  $t\to\infty$. This completes the proof.
\end{proof}

\section{The horizontal Fourier operator and its scaling}
\label{sec:axial}

In this section, we reduce the spectral problem to the two horizontal
components. We then choose an axial Fourier index for each diffusivity and
identify the selected whole-plane operator with the local operator from
Section \ref{sec:local-model}. The axial magnetic component will be
recovered in Section \ref{sec:finish}.

Let $\eps>0$ and $K\in\Z$. Since $u$ is independent of $x_3$, the
horizontal equations in the Fourier mode $\e^{2\pi iKx_3}$ are closed.
We define the operator $A_{\eps,K}$ on $\Lp^2(\T^2;\CC^2)$ by
\begin{align}
  A_{\eps,K}h
              ={} & \eps\bigl(\Delta_\perp-(2\pi K)^2\bigr)h
  -u_\perp\cdot\nabla_\perp h-2\pi iKu_3h
  +(h\cdot\nabla_\perp)u_\perp,
  \notag                                                     \\
              & D(A_{\eps,K})=\Hs^2(\T^2;\CC^2).
  \label{eq:torus-sector}
\end{align}
No constraint is imposed on $h$. In particular, multiplication by a
smooth cut-off preserves the domain. We note that the axial velocity
enters only through the imaginary multiplication operator $-2\pi iKu_3$.
Its derivatives do not enter $A_{\eps,K}$.
For $K\ne0$, a non-zero horizontal divergence is not an obstruction:
it can be balanced by the axial component of the magnetic field. This is
why the horizontal localisation below requires no correction to enforce
a divergence constraint.

\begin{lemma}[The horizontal torus operator]
  \label{lem:torus-sector-basic}
  Let $\eps>0$, $K\in\Z$ and
  $w=(w_\perp,w_3)\in\W^{1,\infty}(\T^2;\R^3)$ satisfy
  $\Div_\perp w_\perp=0$. We denote by $A_{\eps,K}[w]$ the
  operator in equation \eqref{eq:torus-sector} with $u$ replaced by
  $w$. Then $A_{\eps,K}[w]$ is closed and has non-empty resolvent
  set. Its graph norm is equivalent to the $\Hs^2$-norm, and its
  resolvent is compact. Moreover, the map
  \[
    z-A_{\eps,K}[w]:\Hs^2(\T^2;\CC^2)\longrightarrow
    \Lp^2(\T^2;\CC^2)
  \]
  is Fredholm of index zero for every $z\in\CC$.
\end{lemma}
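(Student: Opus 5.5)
We treat $A_{\eps,K}[w]$ as a relatively bounded perturbation of the constant-coefficient operator
\[
A_0:=\eps\bigl(\Delta_\perp-(2\pi K)^2\bigr),\qquad D(A_0)=\Hs^2(\T^2;\CC^2).
\]
$A_0$ is self-adjoint, diagonal in the Fourier basis of $\T^2$ with eigenvalues $-\eps(4\pi^2\abs{k}^2+4\pi^2K^2)$, $k\in\Z^2$. Its graph norm is equivalent to the $\Hs^2$-norm, with constants depending on $\eps$. Since $\Hs^2(\T^2)\hookrightarrow\Lp^2(\T^2)$ is compact by Rellich, $A_0$ has compact resolvent.

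The perturbation is
\[
Bh:=-w_\perp\cdot\nabla_\perp h-2\pi iKw_3h+(h\cdot\nabla_\perp)w_\perp .
\]
Since $w\in\W^{1,\infty}$, this gives
\[
\norm{Bh}_2\leq C_w\bigl(\norm{\nabla_\perp h}_2+\norm h_2\bigr),
\]
with $C_w$ depending on $\norm w_{\W^{1,\infty}}$ and $K$. The interpolation inequality
\[
\norm{\nabla_\perp h}_2\leq \eta\norm{\Delta_\perp h}_2+C_\eta\norm h_2,
\]
which follows from the Fourier representation, shows that $B$ is $A_0$-bounded with relative bound $0$. Hence $A_{\eps,K}[w]=A_0+B$ is closed on $\Hs^2$, and its graph norm is equivalent to that of $A_0$, hence to the $\Hs^2$-norm; this is the stability of closedness under relatively bounded perturbations of bound less than one \cite[Theorem~IV.1.1]{Kato}.

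For the non-empty resolvent set, take real $\beta$ large. On the Fourier side,
\[
\norm{B(\beta-A_0)^{-1}}_{2\to2}\leq C_w\sup_{k}\frac{1+2\pi\abs k}{\beta+4\pi^2\eps(\abs k^2+K^2)}\longrightarrow0\quad(\beta\to\infty),
\]
so $\beta-A_{\eps,K}[w]=(\id-B(\beta-A_0)^{-1})(\beta-A_0)$ is invertible by a Neumann series once $\beta$ is large. Alternatively, an energy estimate gives the same conclusion: the identity $\re\inner{(\beta-A)h}{h}\geq\frac\eps2\norm{\nabla h}_2^2+(\beta-C)\norm h_2^2$ combined with the same factorisation. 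Compactness of the resolvent then follows from
\[
(\beta-A)^{-1}=(\beta-A_0)^{-1}\bigl(\id-B(\beta-A_0)^{-1}\bigr)^{-1},
\]
a compact operator composed with a bounded one. By the resolvent identity, compactness then holds at every point of $\rho(A)$.

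For the Fredholm statement, fix $z\in\CC$ and choose $\beta\in\rho(A)$. Write
\[
z-A=(\beta-A)+(z-\beta),
\]
viewed as a map $\Hs^2\to\Lp^2$. The first term is an isomorphism $\Hs^2\to\Lp^2$ by the graph-norm equivalence. The second term is the embedding $\Hs^2\to\Lp^2$ multiplied by a scalar, which is compact. Compact perturbations preserve the Fredholm property and the index, so $z-A$ is Fredholm of index $0$.

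No step here is a serious obstacle. The only point needing care is that the first-order stretching term $(h\cdot\nabla_\perp)w_\perp$ involves $\nabla w$, which is merely $\Lp^\infty$. This is harmless because it acts as a multiplication operator of order zero on $h$. The divergence-free assumption is not needed for these statements.
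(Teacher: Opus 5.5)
Your proof is correct, but it takes a different route from the paper.

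The paper starts with the Fredholm statement. Its argument is as follows.
\begin{enumerate}
\item $z-A_{\eps,K}[w]$ differs from the isomorphism $\id-\eps\Delta_\perp\colon\Hs^2\to\Lp^2$ by an operator bounded from $\Hs^1$ to $\Lp^2$. This difference is therefore compact from $\Hs^2$ to $\Lp^2$, so index zero holds for every $z$ at once, with no resolvent point needed first.
\item It then uses the energy inequality
$\re\inner{(z-A)h}{h}\geq\eps\norm{\nabla_\perp h}_2^2+(\re z+\eps(2\pi K)^2-\norm{\nabla_\perp w_\perp}_\infty)\norm h_2^2$.
This is where $\Div_\perp w_\perp=0$ and the reality of $w_3$ enter. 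It gives injectivity at a large real $z_0$.
\item Index zero turns injectivity into surjectivity, and the bounded inverse theorem gives $\norm h_{\Hs^2}\leq C\norm{(z_0-A)h}_2$.
\item Graph-norm equivalence, closedness and compactness of the resolvent then follow from this bound and the compact embedding.
\end{enumerate}

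You go through Kato's relative-boundedness theory instead:
\begin{enumerate}
\item Relative bound zero gives closedness and graph-norm equivalence directly.
\item A Neumann series around the self-adjoint $A_0$ produces a resolvent point. Your Fourier multiplier bound is of order $(\beta\eps)^{-1/2}$ for large $\beta$.
\item The factorisation gives a compact resolvent.
\item The Fredholm property comes last, as a compact scalar perturbation of the isomorphism $\beta-A$.
\end{enumerate}

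What each approach buys:
\begin{itemize}
\item Your route is slightly more general, as you note: neither the divergence condition nor the reality of $w_3$ is used.
\item It also yields a quantitative invertibility threshold, and it parallels the paper's own appendix argument for $L_{\eps,u}$.
\item The paper's route is shorter and avoids the relative-bound machinery.
\item More importantly, the paper's route foregrounds the mechanism ``a coercive estimate gives injectivity, and index zero upgrades it to invertibility.'' This is exactly how the lemma is used later, for the background operator in Proposition \ref{prop:background-resolvent} and for the global resolvent in Lemma \ref{lem:global-contour-resolvent}.
\end{itemize}

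Since your proof also establishes index zero for every $z$, those later applications go through unchanged with your version.
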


\begin{proof}
  We first note that the difference between $z-A_{\eps,K}[w]$ and
  $\id-\eps\Delta_\perp$ is bounded from $\Hs^1$ to $\Lp^2$.
  Since $\Hs^2(\T^2)$ embeds compactly into $\Hs^1(\T^2)$,
  this difference is compact from $\Hs^2$ to $\Lp^2$.
  The operator $\id-\eps\Delta_\perp$ is an isomorphism between
  these spaces. Hence $z-A_{\eps,K}[w]$ is Fredholm of index zero.

  Since $\Div_\perp w_\perp=0$ and $w_3$ is real-valued, we have
  \[
    \re\inner{(z-A_{\eps,K}[w])h}h
    \geq\eps\norm{\nabla_\perp h}_2^2
    +\bigl(\re z+\eps(2\pi K)^2
    -\norm{\nabla_\perp w_\perp}_\infty\bigr)\norm h_2^2.
  \]
  Thus $z_0-A_{\eps,K}[w]$ is injective for any real
  $z_0>\norm{\nabla_\perp w_\perp}_\infty$.
  The Fredholm index gives surjectivity. By the bounded inverse theorem,
  $\norm h_{\Hs^2}\leq C\norm{(z_0-A_{\eps,K}[w])h}_2$.
  Since $A_{\eps,K}[w]\colon\Hs^2\to\Lp^2$ is bounded, its graph
  norm is equivalent to the $\Hs^2$-norm. Hence the operator is closed.
  The compact embedding $\Hs^2\hookrightarrow\Lp^2$ gives compactness
  of its resolvent. This proves the claim.
\end{proof}

Let us now fix $0<\eps\leq\eps_0$. By equation
\eqref{eq:diffusivity-intervals}, there is an index $n=n(\eps)$ such that
\[
  \delta_n=\frac{\eps}{\ell_n^2}\in I.
\]
We work at the axial Fourier index $K_n$. This is the first choice that
depends on $\eps$.

The selected whole-plane model retains $u_n^{\rm loc}$ and extends the
constant axial velocity $a_ne_3$ to all of $\R^2$. On
$\Lp^2(\R^2;\CC^2)$, we define
\begin{align}
  A^{(n)}_{\eps,n}h
                    ={} & \eps\bigl(\Delta_\perp-(2\pi K_n)^2\bigr)h
  -(u_n^{\rm loc})_\perp\cdot\nabla_\perp h\notag                    \\
                    & -2\pi iK_n\bigl((u_n^{\rm loc})_3+a_n\bigr)h
  +(h\cdot\nabla_\perp)(u_n^{\rm loc})_\perp,
  \label{eq:selected-whole-plane-operator}
\end{align}
with domain $\Hs^2(\R^2;\CC^2)$. We also define the unitary map
\[
  U_n:\Lp^2(\R^2_Y;\CC^2)\longrightarrow\Lp^2(\R^2_y;\CC^2),
  \qquad
  (U_nf)(y)=\ell_n^{-1}f\left(\frac{y-p_n}{\ell_n}\right).
\]
Its inverse is given by $(U_n^{-1}g)(Y)=\ell_ng(p_n+\ell_nY)$.

\begin{lemma}[Exact scaling]
  \label{lem:exact-renormalization}
  Let $n=n(\eps)$ be chosen as above. Then
  \begin{equation}
    U_n^{-1}A^{(n)}_{\eps,n}U_n=\cL_{\delta_n}-i\tau
    \label{eq:exact-scaling}
  \end{equation}
  as operators with domain $\Hs^2(\R^2;\CC^2)$.
\end{lemma}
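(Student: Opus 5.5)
The identity \eqref{eq:exact-scaling} is a direct change of variables $y=p_n+\ell_nY$. We conjugate each term of \eqref{eq:selected-whole-plane-operator} separately, using $K_n\ell_n=1$, $2\pi K_na_n=\tau$ and $\delta_n=\eps/\ell_n^2$. The domain statement comes first. The map $U_n$ is a dilation composed with a translation and multiplication by the constant $\ell_n^{-1}$, so it maps $\Hs^2(\R^2_Y;\CC^2)$ bijectively onto $\Hs^2(\R^2_y;\CC^2)$. It is unitary on $\Lp^2$ because the Jacobian of $Y\mapsto y$ is $\ell_n^2$. Hence both sides of \eqref{eq:exact-scaling} are defined on $\Hs^2(\R^2;\CC^2)$, and it suffices to verify the identity pointwise for $f\in\Hs^2$, with derivatives understood weakly.

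Fix $f\in\Hs^2$ and put $h=U_nf$, so that $h(y)=\ell_n^{-1}f(Y)$ and $\nabla_yh=\ell_n^{-2}(\nabla_Yf)(Y)$. The conjugated terms are as follows.
\begin{enumerate}
  \item Diffusion. Using $\Delta_yh=\ell_n^{-3}\Delta_Yf$ and $(2\pi K_n)^2=4\pi^2\ell_n^{-2}$, we get
  \[
    U_n^{-1}\eps\bigl(\Delta_\perp-(2\pi K_n)^2\bigr)U_nf=\frac{\eps}{\ell_n^2}\bigl(\Delta_\perp-4\pi^2\bigr)f=\delta_n(\Delta_\perp-4\pi^2)f.
  \]
  \item Transport. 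The local copy is $(u_n^{\rm loc})_\perp(y)=\ell_nv_\perp(Y)$. Then $(u_n^{\rm loc})_\perp\cdot\nabla_yh=\ell_n\cdot\ell_n^{-2}\,v_\perp\cdot\nabla_Yf$, and after applying $U_n^{-1}$ (multiplication by $\ell_n$) this becomes $v_\perp\cdot\nabla_\perp f$.
  \item Stretching. We have $\nabla_y(u_n^{\rm loc})_\perp(y)=(\nabla_Yv_\perp)(Y)$. So $(h\cdot\nabla_\perp)(u_n^{\rm loc})_\perp=\ell_n^{-1}(f\cdot\nabla_\perp)v_\perp$, which becomes $(f\cdot\nabla_\perp)v_\perp$ after conjugation.
  \item Axial multiplier. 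Here $2\pi K_n(u_n^{\rm loc})_3(y)=2\pi K_n\ell_nv_3(Y)=2\pi v_3(Y)$ and $2\pi K_na_n=\tau$. These are multiplication operators, so they conjugate to $-2\pi iv_3-i\tau$.
\end{enumerate}
Summing the four terms reproduces $\cL_{\delta_n}f-i\tau f$ as defined in \eqref{eq:local-generator}.

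There is no real obstacle. The only points needing care are, first, that every scaling weight is accounted for, including the normalisation $\ell_n^{-1}$ in $U_n$, which cancels in every term because each term is linear in $h$. Second, the convention $2\pi iK_nu_3$ in \eqref{eq:torus-sector} must match $2\pi iv_3$ in \eqref{eq:local-generator} precisely because $K_n\ell_n=1$. Finally, the whole-plane operator contains the extended constant $a_n$ rather than the cut-off $u_n^{\rm ax}$, so no derivative of $\vartheta$ enters. This is why the identity is exact rather than approximate.
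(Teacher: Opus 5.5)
Your proposal is correct and matches the paper's proof: both apply the change of variables $y=p_n+\ell_nY$ together with $u_n^{\rm loc}(p_n+\ell_nY)=\ell_nv(Y)$, $K_n\ell_n=1$ and $2\pi K_na_n=\tau$. The paper states the result in one line, whereas you carry out the term-by-term bookkeeping and the $\Hs^2$-domain check explicitly.
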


\begin{proof}
  We use $u_n^{\rm loc}(p_n+\ell_nY)=\ell_nv(Y)$,
  $K_n\ell_n=1$ and $2\pi K_na_n=\tau$. For
  $f\in\Hs^2(\R^2;\CC^2)$, the change of variables
  $y=p_n+\ell_nY$ gives
  \begin{align*}
    U_n^{-1}A^{(n)}_{\eps,n}U_nf
                              ={} & \delta_n(\Delta_Y-4\pi^2)f-v_\perp\cdot\nabla_Yf
    -2\pi iv_3f                                                                      \\
                                 & +(f\cdot\nabla_Y)v_\perp-i\tau f
    =(\cL_{\delta_n}-i\tau)f.
  \end{align*}
  This proves the claim.
\end{proof}

Consequently, the contour and its closed interior become
\[
  \Gamma_\tau=\{z-i\tau:z\in\Gamma\},
  \qquad
  \cG_\tau=\{z-i\tau:z\in\cG\}.
\]
Both sets are independent of $n$ and $\eps$, and their real parts are
unchanged. In particular, the local spectral growth rate is not multiplied
by a factor depending on $\ell_n$.

\section{A uniform resolvent estimate for the background}
\label{sec:localization}

In this section, we remove the selected local velocity and estimate the
resolvent of the remaining operator. We retain every other cell and every
added axial velocity. Testing with a rational function of the axial
velocity gives one coercive estimate for the entire background.
The modified energy test is related to the complex-potential
multipliers in \cite[Section~2.3.1]{AH15} and the weighted coercivity
estimate in \cite[Theorem~3.3]{KRRS17}. We choose the multiplier to
control all unselected scales simultaneously.

Let $\eps>0$ and $n\geq0$ satisfy
$\delta_n=\eps/\ell_n^2\in I$. We set $k_n=2\pi K_n$ and define
\begin{equation}
  u_n^{\rm bg}=u-u_n^{\rm loc}
  =\sum_{m\ne n}u_m^{\rm loc}+\sum_{m\geq0}u_m^{\rm ax}.
  \label{eq:background-velocity}
\end{equation}
Writing $w=(u_n^{\rm bg})_\perp$ and $a=(u_n^{\rm bg})_3$, we consider
\begin{equation}
  A^{\rm bg}_{\eps,n}h
  =\eps(\Delta_\perp-k_n^2)h-w\cdot\nabla_\perp h
  -ik_na h+(h\cdot\nabla_\perp)w,
  \qquad D(A^{\rm bg}_{\eps,n})=\Hs^2(\T^2;\CC^2),
  \label{eq:background-operator}
\end{equation}
on $\Lp^2(\T^2;\CC^2)$. The operators $A^{\rm bg}_{\eps,n}$ and
$A_{\eps,K_n}$ agree outside $\supp u_n^{\rm loc}$.

\begin{proposition}[Uniform background resolvent]
  \label{prop:background-resolvent}
  There exists $\tau_0>0$, depending only on $v,I,\cG,g_*$ and $q$, with
  the following property. Fix $\tau\geq\tau_0$. For all sufficiently large
  $K_0$, every $n\geq0$ and every $\eps=\delta\ell_n^2$, $\delta\in I$,
  satisfy
  \[
    \cG_\tau\subset\rho(A^{\rm bg}_{\eps,n}).
  \]
  Moreover, for $z\in\cG_\tau$ and $f\in\Lp^2(\T^2;\CC^2)$,
  \begin{equation}
    \norm{(z-A^{\rm bg}_{\eps,n})^{-1}f}_2
    +\sqrt\eps\,
    \norm{\nabla_\perp(z-A^{\rm bg}_{\eps,n})^{-1}f}_2
    \leq C\norm f_2.
    \label{eq:bg-resolvent}
  \end{equation}
  The constant $C$ is independent of $n$, $\delta$, $z$ and the final
  enlargement of $K_0$. The background resolvent is holomorphic on a
  neighbourhood of $\cG_\tau$.
\end{proposition}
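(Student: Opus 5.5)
The proof is an a priori estimate for a modified energy identity, obtained by testing with the rational multiplier from Subsection~\ref{sec:ideas}. Invertibility then follows from Lemma~\ref{lem:torus-sector-basic}. Write $k_na=\tau s$, where $s=s_n(y)$ is real. Then $s=K_n/K_m$ on the disc $D(p_m,3R_m/4)$ of each cell $m$ (in particular on the core of $m$), $s=1$ on $D(p_n,3R_n/4)$, $s\in[0,K_n/K_m]$ in the cut-off annulus of cell $m$, and $s=0$ outside all cells. Set $\beta=(s-1)/(1+s)^2$, so that $\beta\in[-1,\tfrac18]$. For $z=\zeta-i\tau$ with $\zeta\in\cG$, $h\in\Hs^2(\T^2;\CC^2)$ and $f=(z-A^{\rm bg}_{\eps,n})h$, I will take the real part of $\inner f{(1+i\beta)h}$ and treat the terms in the following order.

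\emph{Zeroth-order terms.} Since the inner product is antilinear in the second slot, the term $zh$ contributes $\re\zeta\int\abs h^2+(\im\zeta-\tau)\int\beta\abs h^2$. The axial term $ik_nah=i\tau sh$ contributes $\tau\int s\beta\abs h^2$. Together the weight is
\[
  \re\zeta+\beta\im\zeta+\tau\frac{(s-1)^2}{(1+s)^2},
\]
and the term $\eps k_n^2\inner h{(1+i\beta)h}$ has non-negative real part. I will bound this weight from below region by region, with $C_\Gamma=\max_{\cG}\abs{\im\zeta}$.
\begin{enumerate}
  \item On $D(p_n,3R_n/4)$ we have $s=1$ and $\beta=0$, so the weight is at least $g_*$.
  \item On the core of any cell $m\neq n$, \eqref{eq:ratio-choice} gives $s\geq q$ or $s\leq q^{-1}$, hence $(s-1)^2/(1+s)^2\geq c_q>0$. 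The weight is then at least $\tau c_q-C_\Gamma$.
  \item On the cut-off annuli and outside the cells, $(1-s)^2/(1+s)^2\geq\abs\beta(1-s)$ for $s\in[0,1]$, while for $s>1$ the weight is still at least $\tau c_q-C_\Gamma$. Minimising the quadratic in $\abs{1-s}$ gives a lower bound $g_*-C_\Gamma^2/(4\tau)$, which exceeds $g_*/2$ once $\tau$ is large.
\end{enumerate}

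\emph{Diffusion.} Integration by parts gives $\eps\norm{\nabla_\perp h}_2^2$ plus a cross term bounded by $\eps\int\abs{\nabla\beta}\abs{\nabla_\perp h}\abs h$. I will absorb this cross term using Young's inequality, which requires $\eps\abs{\nabla\beta}^2$ to be small. Since $\abs{\beta'(s)}\leq C(1+s)^{-2}$ and, in the annulus of cell $m$, $\abs{\nabla s}\leq C(K_n/K_m)R_m^{-1}$, we get
\[
  \eps\abs{\nabla\beta}^2\leq C\delta\,\frac{K_n^2}{K_m^2}\,\frac{K_m}{K_n^2}=\frac{C\delta}{K_m}\leq\frac{C}{K_0}.
\]
This is uniform over all $m$, whether $m<n$ or $m>n$, and is small after enlarging $K_0$. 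This is the step where the bookkeeping of the multiplier gradient is delicate, and the boundedness of $s\beta'(s)$ is what makes it work.

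\emph{Velocity terms.} The horizontal background velocity $w$ lives only on the cores $m\neq n$. There $\beta$ is constant, so after integrating by parts with $\Div_\perp w=0$ the transport term $\re\inner{w\cdot\nabla_\perp h}{(1+i\beta)h}$ vanishes. The stretching term is bounded by $2\norm{\nabla v}_\infty\int_{\text{cores}}\abs h^2$. I will choose $\tau_0$ so that $\tau_0c_q\geq C_\Gamma+2\norm{\nabla v}_\infty+g_*$; this depends only on $v$, $\cG$, $g_*$ and $q$. Collecting the terms gives
\[
  \tfrac{g_*}{2}\norm h_2^2+\tfrac\eps2\norm{\nabla_\perp h}_2^2\leq2\norm f_2\norm h_2,
\]
and hence \eqref{eq:bg-resolvent}.

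\emph{Invertibility.} The a priori estimate shows that $z-A^{\rm bg}_{\eps,n}$ is injective. Lemma~\ref{lem:torus-sector-basic}, applied with $w=u_n^{\rm bg}$ (which is Lipschitz and horizontally divergence-free), says this operator is Fredholm of index zero, so it is surjective. Thus $\cG_\tau\subset\rho(A^{\rm bg}_{\eps,n})$. The resolvent set is open and $\cG_\tau$ is compact, so $\cG_\tau$ has a neighbourhood in $\rho(A^{\rm bg}_{\eps,n})$, on which the resolvent is holomorphic. The constant $C$ depends only on $g_*$, so it is uniform in $n$, $\delta$, $z$ and any further enlargement of $K_0$.
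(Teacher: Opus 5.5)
Your overall scheme is the paper's. You use the same multiplier $(1+i\beta)h$ with $\beta=F(s)=(s-1)/(1+s)^2$, the same lower bound on the zeroth-order phase weight, the same bound $\eps\abs{\nabla_\perp\beta}^2\leq C/K_0$ for the diffusion cross term, and the same Fredholm argument.

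The velocity step, however, rests on a false claim. Since the inner product is antilinear in its second argument,
\[
  \re\inner{w\cdot\nabla_\perp h}{i\beta h}
  =\int_{\T^2}\beta\,\im\bigl((w\cdot\nabla_\perp h)\cdot\overline h\bigr)\dd y .
\]
The condition $\Div_\perp w=0$ only shows that $\int(w\cdot\nabla_\perp h)\cdot\overline h\dd y$ is purely imaginary, not that it vanishes. Constancy of $\beta$ on the cores does not help, so only the unweighted transport term drops out.

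On the core of cell $m\neq n$, where $\beta=F(r_m)$ with $r_m=K_n/K_m$, the weighted term is a genuine first-order term. Its size is $\abs{F(r_m)}\,\ell_m\norm v_\infty\norm{\nabla_\perp h}_2\norm h_2$, and $r_m$ is unbounded over $m<n$. This term must be absorbed by diffusion. Young's inequality leaves $\frac\eps4\norm{\nabla_\perp h}_2^2$ plus a core coefficient of order $F(r_m)^2\ell_m^2/\eps=(r_mF(r_m))^2/\delta$. That coefficient is bounded uniformly in $m$ only because $\abs{sF(s)}\leq1$. This is exactly why the paper insists that the multiplier decay like $1/s$.

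As written, your argument would go through verbatim for a merely bounded multiplier such as $(s-1)/(s+1)$. For that choice the core coefficient grows like $r_m^2$, while the phase gain $\tau\beta(s-1)$ grows only like $\tau r_m$. So the essential property of $\beta$ is never used. Your remark that boundedness of $s\beta'(s)$ is the crux is also misplaced: the diffusion step needs only $\beta'$ bounded.

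A second term is dropped in the same way. The identity $k_na=\tau s$, with $s=K_n/K_m$ on the core of cell $m$, is false. The component $a=(u_n^{\rm bg})_3$ also contains the axial parts $(u_m^{\rm loc})_3=\ell_mU\chi_h(\abs{y-p_m}/\ell_m)$ of the other local copies. Hence the real part picks up $\int\beta\,k_n\sum_{m\neq n}(u_m^{\rm loc})_3\abs h^2\dd y$. Its coefficient on core $m$ is $2\pi Ur_mF(r_m)\chi_h$, which is bounded by $2\pi\abs U$ again thanks to $\abs{sF(s)}\leq1$.

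Once both terms are included, you get, as in the paper, an error $\frac\eps4\norm{\nabla_\perp h}_2^2+C_0\int_{\mathcal C_n}\abs h^2\dd y$. Here $\mathcal C_n$ is the union of the other cores and $C_0$ depends only on $v$ and $\delta_-$. The proof then closes once $\tau_0$ is large compared with $C_0$; this is also where $\tau_0$ picks up its dependence on $I$.

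There is also a minor slip in your region (3). The claim that the weight is at least $\tau c_q-C_\Gamma$ whenever $s>1$ fails in the cut-off annuli of cells $m<n$, where $s$ passes through values close to $1$. Your quadratic minimisation, however, works for every $s\geq0$ and gives the bound $g_*-C_\Gamma^2/(4\tau)$ there as well.
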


\begin{proof}
  We define
  \[
    r_m=\frac{K_n}{K_m}=\frac{\ell_m}{\ell_n},
    \qquad c_q=\frac{q-1}{q+1},
    \qquad M=\max_{\zeta\in\cG}\abs{\im\zeta},
    \qquad d(K_0)=\sup_{m\geq0}\frac{\ell_m}{R_m}.
  \]
  Then $r_m\geq q$ for $m<n$, $r_m\leq q^{-1}$ for $m>n$, and
  $d(K_0)=\sqrt{\ell_0}\to0$ as $K_0\to\infty$. We set
  \begin{equation}
    s_n(y)=\sum_{m\geq0}r_m
    \vartheta\left(\frac{\abs{y-p_m}}{R_m}\right),
    \qquad
    \beta_n=F(s_n),
    \qquad F(s)=\frac{s-1}{(1+s)^2}.
    \label{eq:background-multiplier}
  \end{equation}
  On a core, $s_n$ records its added axial frequency relative to the
  selected one. We want $F(s)$ to have the sign of $s-1$, but also to
  decay like $1/s$ for large $s$: boundedness alone would not compensate
  for the larger cells' transport coefficients.
  The summands defining $s_n$ have disjoint supports. Their partial sums
  converge uniformly and have gradients bounded by
  $C d(K_0)/\ell_n$. Hence $s_n\in\W^{1,\infty}(\T^2)$ for each
  fixed $n$. Since $s_n\geq0$ and $F'$ is bounded on $[0,\infty)$,
  the chain rule gives
  \begin{equation}
    \abs{\beta_n}\leq1,
    \qquad
    \sqrt\eps\,\norm{\nabla_\perp\beta_n}_\infty
    \leq C\sqrt{\delta_+}\,d(K_0).
    \label{eq:background-multiplier-bounds}
  \end{equation}
  The derivative of $\beta_n$ need not be uniformly small. In the
  diffusion estimate it occurs through $\sqrt\eps\,\nabla_\perp\beta_n$,
  which is controlled by the ratio of core size to buffer radius.

  We first estimate the axial phase. Write
  $\mathcal C_n=\bigcup_{m\ne n}\supp u_m^{\rm loc}$.
  On the core of cell $m$, we have $s_n=r_m$. Thus
  \[
    \left|\frac{s_n-1}{s_n+1}\right|\geq c_q
    \qquad\text{on }\mathcal C_n.
  \]
  For $\zeta\in\cG$, Young's inequality yields
  \begin{align}
    \beta_n\bigl(\im\zeta+\tau(s_n-1)\bigr)
     & \geq\frac\tau2
    \left(\frac{s_n-1}{s_n+1}\right)^2
    -\frac{M^2}{2\tau(1+s_n)^2}\notag              \\
     & \geq\frac{\tau c_q^2}{2}\one_{\mathcal C_n}
    -\frac{M^2}{2\tau}.
    \label{eq:background-phase-positive}
  \end{align}
  This estimate holds throughout $\T^2$, including every outer cut-off
  annulus of the added axial velocities.

  Next, we estimate the local coefficients. Using $|rF(r)|\leq1$ for
  $r\geq0$, the amplitude bound $|u_m^{\rm loc}|\leq C\ell_m$,
  and $k_n\ell_m=2\pi r_m$, we obtain
  \begin{align}
    \abs{\beta_n w}
     & \leq C\ell_n\one_{\mathcal C_n}
    \leq C\sqrt\eps\,\one_{\mathcal C_n},\notag \\
    \abs{\nabla_\perp w}
    +\left|\beta_n k_n\sum_{m\ne n}(u_m^{\rm loc})_3\right|
     & \leq C\one_{\mathcal C_n}.
    \label{eq:background-local-coefficients}
  \end{align}
  Here $\ell_n\leq\delta_-^{-1/2}\sqrt\eps$, and the constants depend
  only on $v$ and $\delta_-$.

  Let $h\in\Hs^2(\T^2;\CC^2)$ and $z=\zeta-i\tau$, with
  $\zeta\in\cG$. We pair $(z-A^{\rm bg}_{\eps,n})h$ with
  $(1+i\beta_n)h\in\Hs^1$ and integrate by parts. Since
  $\Div_\perp w=0$, the
  unweighted transport term has zero real part. Moreover,
  \[
    \re\inner{\nabla_\perp h}
    {\nabla_\perp((1+i\beta_n)h)}
    \geq\norm{\nabla_\perp h}_2^2
    -\int_{\T^2}\abs{\nabla_\perp\beta_n}
    \abs h\abs{\nabla_\perp h}\dd y.
  \]
  Applying equations \eqref{eq:background-multiplier-bounds} and
  \eqref{eq:background-local-coefficients}, followed by Young's
  inequality, we deduce
  \begin{align*}
    \eps\int\abs{\nabla_\perp\beta_n}\abs h
    \abs{\nabla_\perp h}
     & \leq\tfrac\eps4\norm{\nabla_\perp h}_2^2
    +C\delta_+d(K_0)^2\norm h_2^2,              \\
    \int\abs{\beta_n w}\abs h\abs{\nabla_\perp h}
     & \leq\tfrac\eps4\norm{\nabla_\perp h}_2^2
    +C\int_{\mathcal C_n}\abs h^2.
  \end{align*}
  The stretching term and the local axial multiplier contribute at most
  $C\int_{\mathcal C_n}|h|^2$ in absolute value. The remaining
  zeroth-order real part is
  \[
    \int_{\T^2}
    \left[\re\zeta+\eps k_n^2
      +\beta_n\bigl(\im\zeta+\tau(s_n-1)\bigr)\right]\abs h^2\dd y.
  \]
  Combining these estimates with equation
  \eqref{eq:background-phase-positive}, we obtain
  \begin{align}
    \re\inner{(z-A^{\rm bg}_{\eps,n})h}{(1+i\beta_n)h}
    \geq{} & \tfrac\eps2\norm{\nabla_\perp h}_2^2\notag \\
           & +\left(g_*-\frac{M^2}{2\tau}
    -C_1\delta_+d(K_0)^2\right)\norm h_2^2\notag        \\
           & +\left(\frac{\tau c_q^2}{2}-C_0\right)
    \int_{\mathcal C_n}\abs h^2.
    \label{eq:background-coercivity}
  \end{align}
  The constants $C_0,C_1$ are independent of $\tau$, $K_0$, $n$ and
  $\delta_n\in I$.
  The last line compensates for stretching on the unselected cores;
  the middle line controls the rest of the torus. In particular, the
  axial frequencies may coincide in an outer cut-off annulus of an added
  axial velocity, where no local stretching remains. The choice of $\tau$
  addresses the other cores, while enlarging $K_0$ absorbs the error from
  differentiating the multiplier.

  We choose $\tau$ so that
  \begin{equation}
    \frac{M^2}{2\tau}\leq\frac{g_*}{4},
    \qquad
    \frac{\tau c_q^2}{2}\geq C_0+1.
    \label{eq:axial-separation-choice}
  \end{equation}
  We then enlarge $K_0$ until
  \begin{equation}
    C_1\delta_+d(K_0)^2
    =C_1\delta_+\ell_0\leq g_*/4.
    \label{eq:background-diffusion-small}
  \end{equation}
  Consequently, for every $h\in\Hs^2(\T^2;\CC^2)$ we have
  \[
    \tfrac\eps2\norm{\nabla_\perp h}_2^2
    +\tfrac{g_*}2\norm h_2^2
    \leq\re\inner{(z-A^{\rm bg}_{\eps,n})h}
    {(1+i\beta_n)h}.
  \]
  In particular, $z-A^{\rm bg}_{\eps,n}$ is injective.
  The velocity $u_n^{\rm bg}$ is real-valued, belongs to
  $\W^{1,\infty}$ and satisfies
  $\Div_\perp(u_n^{\rm bg})_\perp=0$.
  Applying Lemma \ref{lem:torus-sector-basic}, we deduce that
  $z-A^{\rm bg}_{\eps,n}:\Hs^2\to\Lp^2$ has index zero and
  is therefore surjective. Thus $z\in\rho(A^{\rm bg}_{\eps,n})$.
  For $h=(z-A^{\rm bg}_{\eps,n})^{-1}f$, the preceding estimate
  and $\abs{\beta_n}\leq1$ give
  \[
    \tfrac\eps2\norm{\nabla_\perp h}_2^2
    +\tfrac{g_*}2\norm h_2^2
    \leq\sqrt2\norm f_2\norm h_2.
  \]
  This proves equation \eqref{eq:bg-resolvent}.

  Openness of the resolvent set and compactness of $\cG_\tau$ give the
  asserted holomorphy on a neighbourhood of $\cG_\tau$.
  This completes the proof.
\end{proof}

We also record the corresponding estimate for the selected whole-plane
operator.

\begin{corollary}[Selected local resolvent]
  \label{cor:scaled-local-resolvents}
  Let $\eps=\delta_n\ell_n^2$ with $\delta_n\in I$. For
  $z\in\Gamma_\tau$, the resolvent
  $\cR_n(z)=(z-A^{(n)}_{\eps,n})^{-1}$ exists and satisfies
  \begin{equation}
    \norm{\cR_n(z)f}_2
    +\sqrt\eps\,\norm{\nabla_\perp\cR_n(z)f}_2
    \leq C\norm f_2,
    \qquad f\in\Lp^2(\R^2;\CC^2).
    \label{eq:local-gradient}
  \end{equation}
  The constant is independent of $n$, $\delta_n$ and $z$.
\end{corollary}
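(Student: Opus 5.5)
The plan is to transfer the uniform local bound \eqref{eq:local-continuation-bound} through the exact scaling of Lemma~\ref{lem:exact-renormalization}. First, by \eqref{eq:exact-scaling}, for $z\in\Gamma_\tau$ we write $z=\zeta-i\tau$ with $\zeta\in\Gamma$. Then
\[
  z-A^{(n)}_{\eps,n}=U_n\bigl(\zeta-\cL_{\delta_n}\bigr)U_n^{-1}.
\]
Since $\delta_n\in I$, Proposition~\ref{prop:local-seed} gives $\zeta\in\rho(\cL_{\delta_n})$. Hence $\cR_n(z)=U_n(\zeta-\cL_{\delta_n})^{-1}U_n^{-1}$ exists, and its domain is $\Hs^2(\R^2;\CC^2)$, because $U_n$ maps $\Hs^2$ onto $\Hs^2$.

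Next, the $\Lp^2$ bound. The operator $U_n$ is unitary on $\Lp^2$. Therefore
\[
  \norm{\cR_n(z)f}_2=\norm{(\zeta-\cL_{\delta_n})^{-1}U_n^{-1}f}_2\le C_*\norm f_2,
\]
where $C_*$ is the supremum in \eqref{eq:local-continuation-bound}. This supremum is independent of $\delta\in I$ and $\zeta\in\Gamma$.

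For the gradient, I use how gradients scale under $U_n$. One has $\nabla_y(U_ng)=\ell_n^{-1}U_n(\nabla_Yg)$, so $\norm{\nabla_\perp U_ng}_2=\ell_n^{-1}\norm{\nabla_Yg}_2$. Moreover $\sqrt\eps=\sqrt{\delta_n}\,\ell_n$. Combining these,
\[
  \sqrt\eps\,\norm{\nabla_\perp\cR_n(z)f}_2
  =\sqrt{\delta_n}\,\norm{\nabla_Y(\zeta-\cL_{\delta_n})^{-1}U_n^{-1}f}_2
  \le\sqrt{\delta_+}\,C_*\norm f_2.
\]
Adding the two estimates gives \eqref{eq:local-gradient} with $C=(1+\sqrt{\delta_+})C_*$. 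This constant does not depend on $n$, $\delta_n$ or $z$.

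I expect no real obstacle. The only points to check are that $\Gamma_\tau$ is exactly $\Gamma$ shifted by $-i\tau$, and that the factor $\ell_n^{-1}$ from the gradient is cancelled exactly by $\sqrt\eps\propto\ell_n$. Both follow directly from $K_n\ell_n=1$ and $\delta_n=\eps/\ell_n^2$.
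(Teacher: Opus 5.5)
Your proposal is correct and follows the paper's own proof: you conjugate by the unitary scaling $U_n$ via Lemma~\ref{lem:exact-renormalization}, apply the uniform $\Lp^2\to\Hs^1$ bound of Proposition~\ref{prop:local-seed}, and cancel the factor $\ell_n^{-1}$ from the gradient against $\sqrt\eps=\sqrt{\delta_n}\,\ell_n\leq\sqrt{\delta_+}\,\ell_n$. You also make explicit the shift $z=\zeta-i\tau$ with $\zeta\in\Gamma$ and the constant $C=(1+\sqrt{\delta_+})C_*$, which the paper leaves implicit.
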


\begin{proof}
  By Lemma \ref{lem:exact-renormalization},
  \[
    \cR_n(z)=U_n(z+i\tau-\cL_{\delta_n})^{-1}U_n^{-1}.
  \]
  The map $U_n$ is unitary and
  $\norm{\nabla_\perp U_nf}_2=\ell_n^{-1}\norm{\nabla f}_2$.
  Applying Proposition \ref{prop:local-seed} and using
  $\sqrt\eps/\ell_n=\sqrt{\delta_n}\leq\sqrt{\delta_+}$ proves the
  claim.
\end{proof}

\section{A global approximate inverse and an unstable eigenvalue}
\label{sec:parametrix}

In this section, we transfer the local instability to the torus. We first
combine the selected and background resolvents by spatial cut-offs. The
resulting error consists of diffusion commutators and tends to zero as
$K_0\to\infty$. We then test the contour integral against the fixed
function from Proposition \ref{prop:local-seed}.

Preservation of local spectral information under spatial separation
also appears for complex Schr\"odinger operators in
\cite[Lemma~2]{Bogli17} and \cite[Section~5.2, Proposition~18]{Cuenin22}.
Here we compare the plane model with the torus operator on one contour.

\begin{proposition}[Existence of an unstable torus eigenvalue]
  \label{prop:global-spectral-transfer}
  Fix $v,I,\Gamma,\cG,g_*,f_*,L_*$, and choose $q$, the axial cut-off
  $\vartheta$ and $\tau$ in the order specified above. Then $K_0$ can
  be chosen so that, for every
  $0<\eps\leq\eps_0$ and every $n$ satisfying
  $\delta_n=\eps/\ell_n^2\in I$, we have
  \[
    \Gamma_\tau\subset\rho(A_{\eps,K_n}),
    \qquad
    Q_{\eps,n}:=\frac1{2\pi i}\int_{\Gamma_\tau}
    (z-A_{\eps,K_n})^{-1}\dd z\ne0.
  \]
  In particular, $A_{\eps,K_n}$ has an eigenvalue inside
  $\Gamma_\tau$ with real part at least $g_*$.
\end{proposition}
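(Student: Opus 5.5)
We build a two-piece parametrix for $z-A_{\eps,K_n}$ on $\Gamma_\tau$ from the selected whole-plane resolvent $\cR_n(z)$ (Corollary~\ref{cor:scaled-local-resolvents}) and the background resolvent $\cR^{\rm bg}(z)=(z-A^{\rm bg}_{\eps,n})^{-1}$ (Proposition~\ref{prop:background-resolvent}). Then we show by a Neumann series that it is close to the true resolvent. Finally we test the Riesz projection against the rescaled witness $U_nf_*$.

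\emph{Parametrix.} Choose smooth radial cut-offs adapted to the constant-coefficient buffer, in the form $\chi_j(y)=\chi_j(\abs{y-p_n}/R_n)$ for $j=1,2$. We require $\chi_1=1$ on $[0,1/5]$, $\supp\chi_1\subset[0,1/4]$, $\chi_2=1$ on $[0,1/3]$, $\supp\chi_2\subset[0,1/2]$, and in particular $\chi_2\chi_1=\chi_1$. Transplant functions supported in $D(p_n,R_n)$ between $\T^2$ and $\R^2$ via the chart. Set
\[
  P(z)f=\chi_2\cR_n(z)(\chi_1f)+(1-\chi_3)\cR^{\rm bg}(z)\bigl((1-\chi_1)f\bigr),
\]
where $\chi_3$ is a third cut-off with $\chi_3=1$ on $[0,1/6]$ and $\supp\chi_3\subset[0,1/5]$, so that $(1-\chi_3)(1-\chi_1)=1-\chi_1$.

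On $\supp\chi_2$, the operators $A_{\eps,K_n}$ and $A^{(n)}_{\eps,n}$ coincide, since both carry $u_n^{\rm loc}$ and the axial value $a_n$ there. On $\supp(1-\chi_3)$, the operators $A_{\eps,K_n}$ and $A^{\rm bg}_{\eps,n}$ coincide, because $\supp u_n^{\rm loc}\subset D(p_n,R_n/10)$ by \eqref{eq:local-support-in-buffer}. Hence
\[
  (z-A_{\eps,K_n})P(z)f=f+E(z)f,
\]
where $E(z)$ consists only of diffusion commutators of the form $\eps\bigl(2\nabla\chi\cdot\nabla g+(\Delta\chi)g\bigr)$ with $g$ one of the two resolvent outputs. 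The velocity commutators vanish because the gradients of the cut-offs live in the region where the transport coefficient is zero. The cut-off derivatives are $O(R_n^{-1})$ and $O(R_n^{-2})$. Combining this with \eqref{eq:bg-resolvent} and \eqref{eq:local-gradient} gives
\[
  \norm{E(z)}_{2\to2}\leq C\left(\frac{\sqrt\eps}{R_n}+\frac{\eps}{R_n^2}\right)\leq C\bigl(\sqrt{\delta_+\ell_n}+\delta_+\ell_n\bigr)\leq C\sqrt{\ell_0},
\]
uniformly in $n$, $\delta_n\in I$ and $z\in\Gamma_\tau$.

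\emph{Resolvent on the contour.} Enlarge $K_0$ so that $\norm{E}\leq1/2$. Then $\id+E(z)$ is invertible, so $z-A_{\eps,K_n}$ is surjective. By the Fredholm index zero from Lemma~\ref{lem:torus-sector-basic}, it is also injective. Hence $\Gamma_\tau\subset\rho(A_{\eps,K_n})$, and
\[
  (z-A_{\eps,K_n})^{-1}=P(z)(\id+E(z))^{-1}=P(z)-P(z)E(z)(\id+E(z))^{-1}.
\]
The correction term has norm at most $C\sqrt{\ell_0}$ uniformly, since $\norm{P(z)}$ is uniformly bounded.

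\emph{Matrix element.} Put $f=U_nf_*$, which has unit norm. Its support lies in $D(p_n,L_*\ell_n)\subset D(p_n,R_n/10)$ by $R_v\geq L_*$, so $\chi_1f=f$ and $(1-\chi_1)f=0$. Integrating over $\Gamma_\tau$ gives
\[
  \inner{Q_{\eps,n}f}{f}=\frac1{2\pi i}\int_{\Gamma_\tau}\inner{\chi_2\cR_n(z)f}{f}\dd z+O(\sqrt{\ell_0}).
\]
Because $\chi_2 f=f$, we may drop $\chi_2$. By Lemma~\ref{lem:exact-renormalization} and the substitution $z=\zeta-i\tau$, the main term equals $\inner{P_{\delta_n}f_*}{f_*}$, whose real part exceeds $1/2$ by \eqref{eq:local-spectral-witness}. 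Choosing $K_0$ so that the error is below $1/4$ gives $\re\inner{Q_{\eps,n}f}{f}>1/4$, hence $Q_{\eps,n}\neq0$.

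\emph{Eigenvalue.} The resolvent of $A_{\eps,K_n}$ is compact by Lemma~\ref{lem:torus-sector-basic}. A non-zero Riesz projection therefore forces an eigenvalue inside $\Gamma_\tau$, that is, in $\cG_\tau$, whose real part is at least $g_*$.

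The main obstacle is the commutator bookkeeping. One must check that every non-diffusive term cancels, which needs the cut-off transitions to sit strictly inside $D(p_n,3R_n/4)\setminus\supp u_n^{\rm loc}$ and on the same side as the constant axial value. One must also check that the gradient bounds carry the correct $\sqrt\eps$ weight. All constants must be uniform before $K_0$ is enlarged, which holds because $\tau$ and the cut-offs are fixed first.
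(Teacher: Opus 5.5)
Your proposal is correct and is essentially the paper's proof. You use the same two-resolvent approximate inverse with the same input cut-off, diffusion-only commutator errors of size $\sqrt\eps/R_n+\eps/R_n^2$, a Neumann series combined with Fredholm index zero, and a test of the Riesz projection against $U_nf_*$. The differences are cosmetic: your background output cut-off transitions on $[R_n/6,R_n/5]$ instead of $[R_n/10,R_n/6]$. You should also state explicitly that $P(z)$ maps $\Lp^2$ into $\Hs^2$, since the surjectivity of $z-A_{\eps,K_n}$ relies on this.
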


\subsection{The two-resolvent construction}

We combine the two model resolvents by spatial cut-offs; compare
\cite[Section~3]{DatchevVasy12}. The cut-offs vary where the horizontal
transport vanishes. Thus only diffusion commutators remain.

We fix $\eps$ and $n$ as in the proposition. We choose smooth scalar cut-offs
$\phi_n$, $\psi_n$ and $\chi_n$ with values in $[0,1]$ such that
\[
  \phi_n=1\text{ on }D(p_n,R_n/5),
  \qquad \supp\phi_n\subset D(p_n,R_n/4),
\]
\[
  \psi_n=1\text{ on }D(p_n,R_n/3),
  \qquad \supp\psi_n\subset D(p_n,R_n/2),
\]
and
\[
  \chi_n=0\text{ on }D(p_n,R_n/10),
  \qquad \chi_n=1\text{ outside }D(p_n,R_n/6).
\]
We choose the functions by scaling fixed radial cut-offs, so that
\begin{equation}
  \norm{D^j\psi_n}_\infty+\norm{D^j\chi_n}_\infty
  \leq CR_n^{-j},
  \qquad j=1,2.
  \label{eq:parametrix-cutoff-bounds}
\end{equation}
The smaller input cut-off and the two output cut-offs satisfy
\begin{equation}
  \chi_n(1-\phi_n)=1-\phi_n,
  \qquad \psi_n\phi_n=\phi_n.
  \label{eq:parametrix-cutoff-identities}
\end{equation}
Only $\phi_n$ divides the forcing. The roles of $\chi_n$ and $\psi_n$
are different: they remove the parts of the model solutions where the
coefficients no longer agree with the full operator. The two identities
ensure that these output cut-offs discard none of the original forcing.

We abbreviate
\[
  \cR_{\rm bg}(z)=(z-A^{\rm bg}_{\eps,n})^{-1},
\]
and define
\begin{equation}
  \cR_{\rm app}(z)
  =\chi_n\cR_{\rm bg}(z)(1-\phi_n)
  +\psi_n\cR_n(z)\phi_n,
  \qquad z\in\Gamma_\tau.
  \label{eq:Rapp}
\end{equation}
For the second term, we extend $\phi_nf$ by zero from the fixed chart to
$\R^2$, apply the whole-plane resolvent, and restrict the product with
$\psi_n$ to the torus. Both cut-offs are supported strictly inside the
chart, so this defines a bounded map from $\Lp^2(\T^2;\CC^2)$ to
$\Hs^2(\T^2;\CC^2)$. The background term has the same mapping property.
Equations \eqref{eq:bg-resolvent} and \eqref{eq:local-gradient} give
\begin{equation}
  \sup_{z\in\Gamma_\tau}\norm{\cR_{\rm app}(z)}_{2\to2}
  \leq C_{\rm app},
  \label{eq:parametrix-uniform-bound}
\end{equation}
uniformly in $n$, $\delta_n\in I$ and the final enlargement of $K_0$.
The corresponding $\Lp^2$-to-$\Hs^2$ norm need not be uniform.

\begin{lemma}[Identity and error for the approximate inverse]
  \label{lem:parametrix-error}
  For $z\in\Gamma_\tau$, we have
  \begin{equation}
    (z-A_{\eps,K_n})\cR_{\rm app}(z)=\id+E_n(z),
    \label{eq:parametrix}
  \end{equation}
  where
  \begin{equation}
    \norm{E_n(z)}_{2\to2}
    \leq C\left(\frac{\sqrt\eps}{R_n}+\frac\eps{R_n^2}\right)
    \leq C(\sqrt{\ell_n}+\ell_n).
    \label{eq:explicit-error}
  \end{equation}
  The constant is independent of $n$, $\delta_n\in I$, $z$ and the
  final enlargement of $K_0$.
\end{lemma}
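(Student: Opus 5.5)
The plan is to expand $(z-A_{\eps,K_n})\cR_{\rm app}(z)f$ term by term, using that each model operator agrees with $A_{\eps,K_n}$ on the support of the corresponding output cut-off. Write $A=A_{\eps,K_n}$, $h_{\rm bg}=\cR_{\rm bg}(z)(1-\phi_n)f$ and $h_{\rm loc}=\cR_n(z)\phi_nf$.

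\emph{Coefficient matching.} On $\supp\chi_n\subset\T^2\setminus D(p_n,R_n/10)$ the local copy $u_n^{\rm loc}$ vanishes by \eqref{eq:local-support-in-buffer}. Hence $A$ and $A^{\rm bg}_{\eps,n}$ have identical coefficients there. On $\supp\psi_n\subset D(p_n,R_n/2)\subset D(p_n,3R_n/4)$, every other cell is absent and $u_n^{\rm ax}=a_ne_3$. So $A$ coincides there with $A^{(n)}_{\eps,n}$, whose axial velocity is the extended constant $a_n$.

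\emph{Commutators.} For a scalar cut-off $\eta$ and any operator $B$ of the form \eqref{eq:torus-sector}, we have
\[
  [B,\eta]h=\eps\bigl(2\nabla_\perp\eta\cdot\nabla_\perp h+(\Delta_\perp\eta)h\bigr)-(w_\perp\cdot\nabla_\perp\eta)h.
\]
The stretching and multiplier terms commute with $\eta$. The transport contribution vanishes in both cases:
\begin{itemize}
\item $\nabla\chi_n$ and $\nabla\psi_n$ are supported in the annuli $R_n/10\le|y-p_n|\le R_n/6$, respectively $R_n/3\le|y-p_n|\le R_n/2$;
\item both annuli lie in the constant-coefficient buffer, where the horizontal velocity is zero.
\end{itemize}

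\emph{The identity.} Using matching and locality,
\[
  (z-A)\chi_nh_{\rm bg}=\chi_n(z-A^{\rm bg}_{\eps,n})h_{\rm bg}-[A^{\rm bg}_{\eps,n},\chi_n]h_{\rm bg}=\chi_n(1-\phi_n)f+E^{(1)}f.
\]
Here $\chi_n(1-\phi_n)=1-\phi_n$ by \eqref{eq:parametrix-cutoff-identities}. Similarly,
\[
  (z-A)\psi_nh_{\rm loc}=\psi_n\phi_nf+E^{(2)}f=\phi_nf+E^{(2)}f,
\]
where the localisation to the torus chart is harmless since $\psi_n$ is supported in the chart. Summing gives \eqref{eq:parametrix} with $E_n=E^{(1)}+E^{(2)}$ and
\[
  E^{(1)}f=-\eps\bigl(2\nabla\chi_n\cdot\nabla h_{\rm bg}+(\Delta\chi_n)h_{\rm bg}\bigr),
\]
with the analogous formula for $E^{(2)}$. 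Strictly, the commutator computation needs $h\in\Hs^2$, which both resolvents provide.

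\emph{The bound.} Combine \eqref{eq:parametrix-cutoff-bounds} with \eqref{eq:bg-resolvent} and \eqref{eq:local-gradient}:
\[
  \|E^{(1)}f\|_2\le C\frac{\eps}{R_n}\|\nabla h_{\rm bg}\|_2+C\frac{\eps}{R_n^2}\|h_{\rm bg}\|_2\le C\Bigl(\frac{\sqrt\eps}{R_n}+\frac{\eps}{R_n^2}\Bigr)\|f\|_2.
\]
This uses $\eps\|\nabla h\|_2=\sqrt\eps\cdot\sqrt\eps\|\nabla h\|_2\le C\sqrt\eps\|f\|_2$. The same estimate holds for $E^{(2)}$. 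Finally, $\eps=\delta_n\ell_n^2\le\delta_+\ell_n^2$ and $R_n=\sqrt{\ell_n}$ give
\[
  \frac{\sqrt\eps}{R_n}\le C\sqrt{\ell_n},\qquad \frac{\eps}{R_n^2}\le C\ell_n.
\]
The constants come only from Proposition~\ref{prop:background-resolvent}, Corollary~\ref{cor:scaled-local-resolvents} and the fixed radial cut-offs, so they are uniform in $n$, $\delta_n$, $z$ and $K_0$.

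The main point to check carefully is the coefficient matching on the cut-off supports, namely that the annuli carrying $\nabla\chi_n$ and $\nabla\psi_n$ avoid both the local core and the outer axial cut-off region. The cut-off radii $R_n/10,\,R_n/6,\,R_n/3,\,R_n/2$ were chosen exactly for this, together with \eqref{eq:local-support-in-buffer}. Without that check, transport commutators of size $|w|/R_n$ would appear and would not carry an $\eps$ factor.
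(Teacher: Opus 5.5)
Your proposal is correct and follows the paper's proof. You match coefficients on the supports of $\chi_n$ and $\psi_n$, and reduce the commutators to pure diffusion terms because the cut-off gradients live where the horizontal velocity vanishes. You then bound them with \eqref{eq:parametrix-cutoff-bounds}, \eqref{eq:bg-resolvent} and \eqref{eq:local-gradient}. The only cosmetic difference is that you commute the cut-offs with the model operators $A^{\rm bg}_{\eps,n}$ and $A^{(n)}_{\eps,n}$ rather than with $A_{\eps,K_n}$, which yields the same error terms.
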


\begin{proof}
  We write $T=A_{\eps,K_n}$ and use the convention
  $[T,\chi]=T\chi-\chi T$. The full and background coefficients agree
  wherever $\chi_n\ne0$, since
  $\supp u_n^{\rm loc}\subset D(p_n,R_n/10)$. The full and selected
  whole-plane coefficients agree on $\supp\psi_n$, since the added axial
  velocity of the selected cell is constant there and all other cells
  are disjoint.
  Therefore equation \eqref{eq:parametrix-cutoff-identities} gives
  \begin{align*}
    (z-T)\chi_n\cR_{\rm bg}(z)(1-\phi_n)
     & =1-\phi_n-[T,\chi_n]\cR_{\rm bg}(z)(1-\phi_n), \\
    (z-T)\psi_n\cR_n(z)\phi_n
     & =\phi_n-[T,\psi_n]\cR_n(z)\phi_n.
  \end{align*}
  This proves equation \eqref{eq:parametrix} with
  \[
    E_n(z)=-[T,\chi_n]\cR_{\rm bg}(z)(1-\phi_n)
    -[T,\psi_n]\cR_n(z)\phi_n.
  \]

  The horizontal velocity vanishes on the transition regions of both
  output cut-offs. Every zeroth-order coefficient commutes with a scalar
  cut-off. Hence, for $\chi\in\{\chi_n,\psi_n\}$,
  \[
    [T,\chi]h
    =\eps\bigl(2\nabla_\perp\chi\cdot\nabla_\perp h
    +(\Delta_\perp\chi)h\bigr).
  \]
  Applying equation \eqref{eq:parametrix-cutoff-bounds}, we obtain
  \[
    \norm{[T,\chi]h}_2
    \leq C\left(\frac\eps{R_n}\norm{\nabla_\perp h}_2
    +\frac\eps{R_n^2}\norm h_2\right).
  \]
  The resolvent gradient bound costs a factor $\eps^{-1/2}$, leaving
  the ratio $\sqrt\eps/R_n$. Cutting off on the core scale instead
  would leave $\sqrt\eps/\ell_n\asymp1$: small diffusivity alone
  would not make the error small.
  Equations \eqref{eq:bg-resolvent} and \eqref{eq:local-gradient} yield
  the first inequality in equation \eqref{eq:explicit-error}. The second
  follows from $\eps=\delta_n\ell_n^2$, $R_n=\sqrt{\ell_n}$ and
  $\delta_n\in I$. This proves the claim.
\end{proof}

\begin{lemma}[The global resolvent on the spectral contour]
  \label{lem:global-contour-resolvent}
  For all sufficiently large $K_0$, uniformly for $n\geq0$ and
  $\eps=\delta_n\ell_n^2$ with $\delta_n\in I$, we have
  $\Gamma_\tau\subset\rho(A_{\eps,K_n})$. Moreover,
  \begin{equation}
    (z-A_{\eps,K_n})^{-1}
    =\cR_{\rm app}(z)(\id+E_n(z))^{-1},
    \qquad z\in\Gamma_\tau.
    \label{eq:global-resolvent}
  \end{equation}
\end{lemma}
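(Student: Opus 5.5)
The plan is a Neumann series argument built on Lemma~\ref{lem:parametrix-error}, combined with a Fredholm index argument from Lemma~\ref{lem:torus-sector-basic}.

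\emph{Smallness of the error.} By \eqref{eq:explicit-error}, $\norm{E_n(z)}_{2\to2}\leq C(\sqrt{\ell_n}+\ell_n)\leq 2C\sqrt{\ell_0}$. The constant $C$ is uniform in $n$, $\delta_n\in I$, $z\in\Gamma_\tau$ and the final enlargement of $K_0$. We therefore enlarge $K_0$, keeping all earlier constraints, so that $2C K_0^{-1/2}\leq 1/2$. Then $\id+E_n(z)$ is invertible on $\Lp^2(\T^2;\CC^2)$ by a Neumann series, with $\norm{(\id+E_n(z))^{-1}}\leq 2$, uniformly in all parameters.

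\emph{Right inverse.} Set $S(z):=\cR_{\rm app}(z)(\id+E_n(z))^{-1}$. Since $\cR_{\rm app}(z)$ maps $\Lp^2$ boundedly into $\Hs^2=D(A_{\eps,K_n})$, the same holds for $S(z)$. By \eqref{eq:parametrix},
\[
(z-A_{\eps,K_n})S(z)=(\id+E_n(z))(\id+E_n(z))^{-1}=\id .
\]
Hence $z-A_{\eps,K_n}\colon\Hs^2\to\Lp^2$ is surjective.

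\emph{Injectivity and identification.} Lemma~\ref{lem:torus-sector-basic}, applied with $w=u$ (which is admissible by Lemma~\ref{lem:global-velocity}), shows that $z-A_{\eps,K_n}$ is Fredholm of index zero from $\Hs^2$ to $\Lp^2$. Surjectivity then gives trivial kernel, so $z\in\rho(A_{\eps,K_n})$. The inverse coincides with the right inverse $S(z)$, which is \eqref{eq:global-resolvent}.

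\emph{Main obstacle.} The only delicate point is uniformity. The enlargement of $K_0$ must not disturb the constant in \eqref{eq:explicit-error}. That constant depends on the uniform bounds \eqref{eq:bg-resolvent} and \eqref{eq:local-gradient}, and these are stated to be independent of the final enlargement of $K_0$ once $\tau$ has been fixed. So a single choice of $K_0$ works for every $n$ and every $\eps$, and the same choice also gives a uniform bound $\norm{(z-A_{\eps,K_n})^{-1}}\leq 2C_{\rm app}$ on $\Gamma_\tau$.
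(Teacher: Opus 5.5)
Your proposal is correct and follows the paper's proof. You use the uniform smallness of $E_n(z)$ from \eqref{eq:explicit-error} as $K_0\to\infty$, a Neumann series to get the right inverse $\cR_{\rm app}(z)(\id+E_n(z))^{-1}$ with values in $\Hs^2$, and Fredholm index zero from Lemma~\ref{lem:torus-sector-basic} to pass from surjectivity to invertibility. The only difference is the threshold: the paper enlarges $K_0$ until $\norm{E_n(z)}_{2\to2}<\eta_*$, with $\eta_*$ chosen small enough for the later Riesz-projection estimate in Lemma~\ref{lem:global-riesz-nonzero}, whereas your threshold $1/2$ suffices for this lemma alone.
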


\begin{proof}
  The error is small for every forcing, not merely for a single
  approximate eigenfunction. This operator-norm control is what permits
  a Neumann-series correction despite the lack of self-adjointness.

  Since $\ell_n\leq\ell_0=1/K_0$, equation
  \eqref{eq:explicit-error} tends to zero uniformly as $K_0\to\infty$.
  We choose $0<\eta_*<1/2$ so that
  \[
    \frac{\operatorname{length}(\Gamma)}{2\pi}
    C_{\rm app}\frac{\eta_*}{1-\eta_*}<\frac14.
  \]
  We enlarge $K_0$ until
  \begin{equation}
    \sup_{n\geq0}\sup_{\delta\in I}\sup_{z\in\Gamma_\tau}
    \norm{E_{n,\delta}(z)}_{2\to2}<\eta_*.
    \label{eq:error-small}
  \end{equation}
  Here $E_{n,\delta}$ denotes the error with $\eps=\delta\ell_n^2$.
  The Neumann series gives $(\id+E_n(z))^{-1}$, and equation
  \eqref{eq:parametrix} supplies the right inverse in equation
  \eqref{eq:global-resolvent}.

  It remains to justify that this right inverse is the resolvent. By
  Lemma \ref{lem:torus-sector-basic}, the map
  $z-A_{\eps,K_n}:\Hs^2\to\Lp^2$ is Fredholm of index zero. The
  constructed right inverse takes values in $\Hs^2$ and proves
  surjectivity. Index zero then gives injectivity. This establishes
  equation \eqref{eq:global-resolvent} and completes the proof.
\end{proof}

\subsection{A non-zero matrix element of the Riesz projection}
The preceding argument keeps the contour free of spectrum; it does
not yet put any spectrum inside. We now show that a scalar matrix
element of the Riesz projection remains non-zero.

We test the contour integral against the compactly supported function
from Proposition \ref{prop:local-seed}. A related use of a non-zero
matrix element of a spectral projection appears in
\cite[Section~5]{NV25}.

\begin{lemma}[The global Riesz projection is non-zero]
  \label{lem:global-riesz-nonzero}
  The projection $Q_{\eps,n}$ in
  Proposition \ref{prop:global-spectral-transfer} is non-zero for every
  $n\geq0$ and $\eps=\delta_n\ell_n^2$ with $\delta_n\in I$.
\end{lemma}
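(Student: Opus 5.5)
Testing $Q_{\eps,n}$ against the rescaled witness from Proposition~\ref{prop:local-seed} should do it. Put
\[
  F_n:=U_nf_*,
\]
viewed as a function on $\T^2$. Since $\supp f_*\subset D(0,L_*)$ and $L_*\leq R_v$, equation \eqref{eq:local-support-in-buffer} gives $\supp F_n\subset D(p_n,R_n/10)$. Hence $F_n$ lies in the fixed chart, $\norm{F_n}_2=1$, $\phi_nF_n=F_n$ and $(1-\phi_n)F_n=0$. The goal is
\[
  \re\inner{Q_{\eps,n}F_n}{F_n}>\tfrac14,
\]
which forces $Q_{\eps,n}\neq0$.

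\emph{Step 1: reduce to the selected resolvent.} By Lemma~\ref{lem:global-contour-resolvent}, write
\[
  (z-A_{\eps,K_n})^{-1}=\cR_{\rm app}(z)-\cR_{\rm app}(z)E_n(z)(\id+E_n(z))^{-1}.
\]
The second term has operator norm at most $C_{\rm app}\eta_*/(1-\eta_*)$. Integrating over $\Gamma_\tau$, whose length equals that of $\Gamma$, and using the choice of $\eta_*$, its contribution to the matrix element has modulus $<1/4$. For the first term, $(1-\phi_n)F_n=0$ kills the background part, leaving $\psi_n\cR_n(z)F_n$. Paired with $F_n$, the cut-off $\psi_n$ is harmless because $\psi_n=1$ on $\supp F_n$. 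Thus
\[
  \inner{\cR_{\rm app}(z)F_n}{F_n}=\inner{\cR_n(z)F_n}{F_n}_{\Lp^2(\R^2)}.
\]

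\emph{Step 2: scaling.} By Lemma~\ref{lem:exact-renormalization} and unitarity of $U_n$,
\[
  \inner{\cR_n(z)F_n}{F_n}=\inner{(z+i\tau-\cL_{\delta_n})^{-1}f_*}{f_*}.
\]
The substitution $\zeta=z+i\tau$ maps $\Gamma_\tau$ onto $\Gamma$ with the same orientation. The contour integral of the main term therefore equals $\inner{P_{\delta_n}f_*}{f_*}$, whose real part exceeds $1/2$ by \eqref{eq:local-spectral-witness}, since $\delta_n\in I$. Combining with Step~1 gives
\[
  \re\inner{Q_{\eps,n}F_n}{F_n}>\tfrac12-\tfrac14=\tfrac14>0.
\]

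\emph{Main obstacle.} The delicate point is bookkeeping rather than estimation. One must check that the extension-by-zero and restriction conventions in \eqref{eq:Rapp} make the pairing on $\T^2$ agree exactly with the whole-plane pairing; this holds because all supports lie strictly inside the chart. One must also check that the error bound \eqref{eq:error-small} is uniform in $n$, $\delta$ and $z$, so that the $1/4$ margin is independent of $\eps$; this was arranged in the proof of Lemma~\ref{lem:global-contour-resolvent}.
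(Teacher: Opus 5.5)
Your proposal is correct and follows essentially the same route as the paper. You test against $U_nf_*$ and use $(1-\phi_n)U_nf_*=0$ together with $\psi_n=1$ on its support. This reduces $\inner{\cR_{\rm app}(z)U_nf_*}{U_nf_*}$ to the rescaled local matrix element $\inner{(z+i\tau-\cL_{\delta_n})^{-1}f_*}{f_*}$. You then bound the Neumann-series correction by $\frac{\operatorname{length}(\Gamma)}{2\pi}C_{\rm app}\frac{\eta_*}{1-\eta_*}<\frac14$, which yields $\re\inner{Q_{\eps,n}U_nf_*}{U_nf_*}>\frac14$ exactly as in the paper.
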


\begin{proof}
  We use $f_*$ and $L_*$ from Proposition \ref{prop:local-seed}.
  Since $L_*\leq R_v$, equation \eqref{eq:local-support-in-buffer}
  gives
  \begin{equation}
    L_*\ell_n<R_n/5
    \qquad\text{for every }n\geq0.
    \label{eq:scaled-witness-support}
  \end{equation}
  We regard $f_n=U_nf_*$ as a torus function by restriction to the fixed
  chart and extension by zero. Then $\norm{f_n}_2=1$ and
  $\phi_nf_n=\psi_nf_n=f_n$. Applying equation \eqref{eq:Rapp} and
  the scaling identity \eqref{eq:exact-scaling}, we obtain
  \begin{align*}
    \langle\cR_{\rm app}(z)f_n,f_n\rangle
     & =\langle\cR_n(z)f_n,f_n\rangle                      \\
     & =\langle(z+i\tau-\cL_{\delta_n})^{-1}f_*,f_*\rangle
    \qquad(z\in\Gamma_\tau).
  \end{align*}
  Indeed, the input of the background term vanishes, and the output
  cut-off in the selected term equals one on $\supp f_n$. Integrating
  around the contour gives
  \begin{equation}
    \frac1{2\pi i}\int_{\Gamma_\tau}
    \langle\cR_{\rm app}(z)f_n,f_n\rangle\dd z
    =\langle P_{\delta_n}f_*,f_*\rangle.
    \label{eq:selected-contour-identity}
  \end{equation}
  The resolvent output need not be supported in the cell: its tail
  disappears from this scalar matrix element because the second entry
  $f_n$ is supported there. No decay estimate for a local eigenfunction
  is needed at this step.

  Next, equation \eqref{eq:global-resolvent} yields
  \[
    (z-A_{\eps,K_n})^{-1}-\cR_{\rm app}(z)
    =-\cR_{\rm app}(z)(\id+E_n(z))^{-1}E_n(z).
  \]
  Using $\norm{f_n}_2=1$ and integrating over the contour, we deduce
  from equations \eqref{eq:parametrix-uniform-bound} and
  \eqref{eq:error-small} that
  \begin{equation}
    \abs{\langle Q_{\eps,n}f_n,f_n\rangle
      -\langle P_{\delta_n}f_*,f_*\rangle}
    \leq\frac{\operatorname{length}(\Gamma)}{2\pi}
    C_{\rm app}\frac{\eta_*}{1-\eta_*}<\frac14.
    \label{eq:global-projection-error}
  \end{equation}
  Combining this estimate with equation \eqref{eq:local-spectral-witness},
  we obtain $\re\langle Q_{\eps,n}f_n,f_n\rangle>1/4$. In particular,
  $Q_{\eps,n}\ne0$. This proves the claim.
\end{proof}

\begin{proof}[Proof of Proposition \ref{prop:global-spectral-transfer}]
  Lemmas \ref{lem:global-contour-resolvent} and
  \ref{lem:global-riesz-nonzero} give the contour resolvent and the
  non-zero Riesz projection. By Lemma \ref{lem:torus-sector-basic}, the
  resolvent is compact. Thus the spectrum inside $\Gamma_\tau$ consists
  of isolated eigenvalues of finite algebraic multiplicity, and the
  non-zero projection shows that this spectrum is nonempty. Every
  eigenvalue in this region has real part at least $g_*$, since
  translation by $-i\tau$ preserves real parts. This completes the proof.
\end{proof}

Let us collect the parameter choices. We first fix
$v,I,\Gamma,\cG,g_*,f_*,L_*$, and then choose
$1<q<\sqrt{\delta_+/\delta_-}$ and the axial cut-off $\vartheta$.
Next, we fix $\tau$ by equation
\eqref{eq:axial-separation-choice}. Finally, we choose $K_0$ large enough
for the integer-ratio bound, the packing in equation
\eqref{eq:chart-packing}, the core containment in equation
\eqref{eq:local-support-in-buffer}, and equations
\eqref{eq:background-diffusion-small} and \eqref{eq:error-small}.
The choice of $K_0$ fixes the velocity and $\eps_0$. Only the
matching index $n=n(\eps)$ depends on the given diffusivity.

\begin{remark}
  \label{rem:flexible-buffer-scales}
  The resolvent and spectral-transfer arguments above also apply to
  nonincreasing radii $R_n$ satisfying
  \begin{equation}
    \sum_{n\geq0}R_n\longrightarrow0,
    \qquad\sup_{n\geq0}\frac{\ell_n}{R_n}\longrightarrow0
    \qquad(K_0\to\infty).
    \label{eq:abstract-buffer-conditions}
  \end{equation}
  The first condition gives the packing. Writing
  $d(K_0)=\sup_n\ell_n/R_n$, we bound the diffusion error in equation
  \eqref{eq:background-coercivity} by $C d(K_0)^2$. Moreover, we bound the commutator
  error in equation \eqref{eq:explicit-error} by
  $C(d(K_0)+d(K_0)^2)$. Moreover, the second condition gives core
  containment and equation \eqref{eq:scaled-witness-support}.
  In particular, $R_n=\ell_n^\alpha$ works for
  every fixed $0<\alpha<1$. The choice $\alpha=1/2$ is only a convenient
  common scale for these estimates.
\end{remark}

\section{Recovery of the magnetic field and proof of the main theorem}
\label{sec:finish}

We are now able to recover the axial magnetic component from the
horizontal eigenfunction. The divergence constraint fixes this compoenent.
Then we deduce the full eigenvalue equation in the sense of
distributions. We use classic results from elliptic regularity to prove membership in the
operator domain. In the end, we pass from this possibly complex-valued eigenvalue and eigenfucntion
to a real-valued solution and prove an estimate on the magnetic-length.

\begin{lemma}[Recovery of the axial magnetic component]
  \label{lem:axial-recovery}
  Let $\eps>0$, $K\in\Z\setminus\{0\}$ and $\lambda\in\CC$.
  Let $u\in\W^{1,\infty}(\T^3;\R^3)$ be divergence-free and
  independent of $x_3$. Suppose that
  $0\neq h\in\Hs^2(\T^2;\CC^2)$ satisfies
  $A_{\eps,K}h=\lambda h$. We set
  \[
    b_3=-\frac{\Div_\perp h}{2\pi iK},
    \qquad b=(h,b_3),
    \qquad V(y,x_3)=\e^{2\pi iKx_3}b(y).
  \]
  Then $b\in\Hs^2(\T^2;\CC^3)$ and
  \[
    0\neq V\in\Hs^2(\T^3;\CC^3)\cap\Lp^2_\sigma(\T^3),
    \qquad L_{\eps,u}V=\lambda V.
  \]
\end{lemma}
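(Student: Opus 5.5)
We plan to verify the three claims in order: regularity of $b$, the divergence constraint and non-vanishing of $V$, and the full eigenvalue equation. The regularity claim is not immediate, since $h\in\Hs^2$ only gives $b_3\in\Hs^1$. So we will prove $b_3\in\Hs^2$ by elliptic regularity. We apply $\Div_\perp$ to the horizontal eigenvalue equation $A_{\eps,K}h=\lambda h$ in distributions and use $\Div_\perp u_\perp=0$. This gives an equation for $\Div_\perp h$ of the form
\[
\eps(\Delta_\perp-(2\pi K)^2)\Div_\perp h-\lambda\Div_\perp h=\Div_\perp\bigl(u_\perp\cdot\nabla_\perp h+2\pi iKu_3h-(h\cdot\nabla_\perp)u_\perp\bigr).
\]
The right-hand side is the divergence of an $\Lp^2$ field, since the terms lie in $\Lp^2$ when $u\in\W^{1,\infty}$ and $h\in\Hs^1$. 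A naive count therefore gives only $\Div_\perp h\in\Hs^1$.

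To gain the extra derivative, we will use the commutator identity
\[
\Div_\perp(u_\perp\cdot\nabla_\perp h)-\Div_\perp((h\cdot\nabla_\perp)u_\perp)=u_\perp\cdot\nabla_\perp\Div_\perp h,
\]
valid for divergence-free $u_\perp$. We first check it formally on smooth fields: the second-derivative terms of $u$ cancel and $\partial_iu_j\partial_jh_i$ cancels with itself. We then extend it by density, using $u\in\W^{1,\infty}$ and $h\in\Hs^2$. The remaining term is $2\pi iK\Div_\perp(u_3h)=2\pi iK(\nabla_\perp u_3\cdot h+u_3\Div_\perp h)\in\Lp^2$. Hence $(\eps(\Delta_\perp-(2\pi K)^2)-\lambda)\Div_\perp h\in\Lp^2$, and therefore $\Div_\perp h\in\Hs^2$ on $\T^2$ by the Fourier estimate. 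This is the step we regard as the main obstacle: making the cancellation rigorous at Lipschitz regularity without ever differentiating $\nabla u$.

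Next we set $\mu=2\pi iK$, so that $\partial_3 V=\mu V$. By construction
\[
\Div V=\e^{2\pi iKx_3}(\Div_\perp h+\mu b_3)=0.
\]
Moreover $V\neq0$ because $h\neq0$, and $V\in\Hs^2(\T^3)$ because $b\in\Hs^2(\T^2)$. It remains to check $L_{\eps,u}V=\lambda V$, written in the equivalent form
\[
\eps\Delta V-(u\cdot\nabla)V+(V\cdot\nabla)u,
\]
which is legitimate since both $u$ and $V$ are divergence-free. We use that $u$ is independent of $x_3$, so $(V\cdot\nabla)u=\e^{2\pi iKx_3}(b_\perp\cdot\nabla_\perp)u$. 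The horizontal components then reproduce exactly $A_{\eps,K}h=\lambda h$.

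For the axial component we must show
\[
\eps(\Delta_\perp-(2\pi K)^2)b_3-u_\perp\cdot\nabla_\perp b_3-\mu u_3b_3+h\cdot\nabla_\perp u_3=\lambda b_3.
\]
We will multiply by $-\mu$ and substitute $-\mu b_3=\Div_\perp h$. The left side becomes the divergence identity derived above plus the term $-\mu h\cdot\nabla_\perp u_3$. This extra term is exactly the leftover $-2\pi iK\nabla_\perp u_3\cdot h$ from $\Div_\perp(\mu u_3h)$, so the two sides match. Since $K\neq0$, we may divide by $\mu$, which proves the axial equation.

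Finally, $V$ lies in the domain $\Hs^2\cap\Lp^2_\sigma$ and the equation holds in $\Lp^2$. This gives $L_{\eps,u}V=\lambda V$.
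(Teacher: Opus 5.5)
Your proof is correct, but it is organised differently from the paper's. The paper does not prove $b_3\in\Hs^2$ first. It works with $V\in\Hs^1$ and sets $F=\lambda V-\eps\Delta V-\curl(u\times V)\in\Hs^{-1}$. The horizontal equation gives $F_\perp=0$. Since $\Div\curl=0$ and $\Div\Delta V=\Delta\Div V=0$, one has $\Div F=0$, and because $F$ lies in the single axial mode $K$ this yields $2\pi iKF_3=0$.

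Regularity then comes last and essentially for free. Since $u\times V\in\Hs^1$ needs only $V\in\Hs^1$ and $u\in\W^{1,\infty}$, the equation gives $\eps\Delta V\in\Lp^2$, and periodic elliptic regularity yields $V\in\Hs^2$.

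Your commutator identity $\Div_\perp(u_\perp\cdot\nabla_\perp h)-\Div_\perp((h\cdot\nabla_\perp)u_\perp)=u_\perp\cdot\nabla_\perp\Div_\perp h$, together with the $u_3$ term, is the Fourier-reduced component form of $\Div\curl(u\times V)=0$. So you are reproving by hand what the paper obtains from a structural identity. This forces you to justify the cancellation at Lipschitz regularity. Your mollification argument does this correctly: the mollified field stays divergence-free, both divergence terms converge in $\Hs^{-1}$, and the right-hand side converges in $\Lp^2$.

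In return, your route shows explicitly where $\Div_\perp u_\perp=0$ enters. It also gives $\Div_\perp h\in\Hs^2$ as a purely two-dimensional statement before $V$ is assembled. The paper's route is shorter and avoids both the commutator computation and the density step.
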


\begin{proof}
  There is no additional spectral problem for $b_3$; the divergence
  constraint fixes it. The apparent loss of one derivative is a
  regularity issue, resolved by proving the full equation in distributions
  before applying ellipticity.

  We write $k=2\pi K$. By construction, $V\in\Hs^1$ and
  $\Div V=0$. We set
  \[
    F=\lambda V-\eps\Delta V-\curl(u\times V)
    \in\Hs^{-1}(\T^3;\CC^3).
  \]
  Since $u\in\W^{1,\infty}$ and both fields are divergence-free,
  the identity
  $\curl(u\times V)=(V\cdot\nabla)u-(u\cdot\nabla)V$
  holds in distributions. As $u$ is independent of $x_3$, the
  horizontal eigenvalue equation gives $F_\perp=0$. Moreover,
  $\Div F=0$, and every term in $F$ has axial Fourier index $K$.
  Hence
  \[
    0=\Div F=\partial_3F_3=ikF_3
  \]
  in distributions. Since $k\neq0$, we conclude that $F=0$.

  Finally, $u\times V\in\Hs^1$, so the equation gives
  \[
    \eps\Delta V=\lambda V-\curl(u\times V)\in\Lp^2.
  \]
  Periodic elliptic regularity yields $V\in\Hs^2$, and taking its
  $K$-th Fourier coefficient gives $b\in\Hs^2$. Thus
  $L_{\eps,u}V=\lambda V$. Since $h\neq0$, also $V\neq0$.
  This completes the proof.
\end{proof}

\begin{lemma}[The real part of an axial Fourier mode]
  \label{lem:real-axial-mode}
  Let $K\in\Z\setminus\{0\}$, $b\in\Lp^2(\T^2;\CC^3)$ and
  $\varphi\in\R$. Then, for almost every $y\in\T^2$,
  \begin{equation}
    \int_0^1
    \abs{\re\bigl(\e^{i\varphi}\e^{2\pi iKx_3}b(y)\bigr)}^2
    \dd x_3=\frac12\abs{b(y)}^2.
    \label{eq:real-part-norm}
  \end{equation}
  In particular, this real part is non-zero whenever $b\neq0$.
\end{lemma}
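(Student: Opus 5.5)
The plan is a pointwise computation in $y$. Fix $y$ outside the null set where $b(y)$ is undefined and set $c:=\e^{i\varphi}b(y)\in\CC^3$. Then the integrand is $\abs{\re(\e^{2\pi iKx_3}c)}^2$, so the claim becomes a statement about one complex vector $c$ and a single Fourier mode.

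First I expand the real part as an average with its conjugate:
\[
  \re(\e^{2\pi iKx_3}c)=\tfrac12\bigl(\e^{2\pi iKx_3}c+\e^{-2\pi iKx_3}\overline c\bigr).
\]
Squaring the Euclidean norm gives
\[
  \abs{\re(\e^{2\pi iKx_3}c)}^2
  =\tfrac14\bigl(\e^{4\pi iKx_3}\,c\cdot c+2\abs c^2+\e^{-4\pi iKx_3}\,\overline c\cdot\overline c\bigr),
\]
where $c\cdot c=\sum_j c_j^2$ is the bilinear, not the Hermitian, product.

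Next I integrate over $x_3\in[0,1]$. Since $K\neq0$, the frequency $2K$ is a non-zero integer, so $\int_0^1\e^{\pm4\pi iKx_3}\dd x_3=0$ and both oscillating terms drop out. This step is the only place where $K\ne0$ is used, and it is also why the phase $\e^{i\varphi}$ does not affect the result. What remains is $\tfrac12\abs c^2=\tfrac12\abs{b(y)}^2$, because $\abs{\e^{i\varphi}}=1$. This proves \eqref{eq:real-part-norm}.

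For the final assertion, I integrate \eqref{eq:real-part-norm} over $y\in\T^2$ and apply Fubini, which is justified because the integrand is non-negative and measurable. This gives $\norm{\re(\e^{i\varphi}\e^{2\pi iKx_3}b)}_{\Lp^2(\T^3)}^2=\tfrac12\norm b_{\Lp^2(\T^2)}^2$, and the right-hand side is positive whenever $b\neq0$ in $\Lp^2$. There is no real obstacle in this proof. The only point that needs care is using the bilinear square $c\cdot c$ in the expansion, so that the oscillating terms are the ones that vanish on integration.
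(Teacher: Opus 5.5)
Your proposal is correct and is essentially the paper's argument. Both proofs compute pointwise in $y$ and use that the oscillating terms with non-zero integer frequency average to zero over $[0,1]$. The only difference is presentational: you expand with complex exponentials and the bilinear square $c\cdot c$, while the paper writes $b=a+ic$ with $a,c$ real and uses the averages of $\cos^2$, $\sin^2$ and $\sin\cos$. Your Tonelli step for the final non-vanishing assertion spells out a point the paper leaves implicit.
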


\begin{proof}
  For a single non-zero axial Fourier mode, multiplying by a constant
  phase translates the pattern in $x_3$. Integration over a full axial
  period removes this translation from the norm; the claim is not a
  pointwise lower bound on a real part.

  We write $b=a+ic$, where $a$ and $c$ are real-valued. Then
  \[
    \re\bigl(\e^{i\varphi}\e^{2\pi iKx_3}b\bigr)
    =a\cos(2\pi Kx_3+\varphi)-c\sin(2\pi Kx_3+\varphi).
  \]
  Since $K$ is a non-zero integer, the squared sine and cosine have
  average $1/2$, and their product has average zero on $[0,1]$.
  Integrating the squared norm proves the claim.
\end{proof}

\begin{proof}[Proof of Theorem \ref{thm:main}]
  Let $0<\eps\leq\eps_0$ and choose $n=n(\eps)$ as in Section
  \ref{sec:axial}. Proposition \ref{prop:global-spectral-transfer}
  provides an eigenvalue $\lambda_\eps=\alpha_\eps+i\omega_\eps$ and
  an eigenfunction $0\neq h^\eps\in\Hs^2(\T^2;\CC^2)$ such that
  \[
    A_{\eps,K_n}h^\eps=\lambda_\eps h^\eps,
    \qquad \alpha_\eps\geq g_*>0.
  \]
  Applying Lemma \ref{lem:axial-recovery}, we obtain
  $b^\eps\in\Hs^2(\T^2;\CC^3)$ such that
  $V^\eps(y,x_3)=\e^{2\pi iK_nx_3}b^\eps(y)$ is a non-zero
  element of $\Hs^2(\T^3;\CC^3)\cap\Lp^2_\sigma(\T^3)$ and
  satisfies $L_{\eps,u}V^\eps=\lambda_\eps V^\eps$.

  We set $B_{\rm in}^\eps=\re V^\eps$. Since the coefficients of the
  induction equation are real-valued, its solution is
  \[
    B^\eps(t)=\e^{\alpha_\eps t}
    \re\bigl(\e^{i\omega_\eps t}V^\eps\bigr).
  \]
  Applying Lemma \ref{lem:real-axial-mode} with phases
  $\omega_\eps t$ and zero, we obtain
  \[
    \norm{\re(\e^{i\omega_\eps t}V^\eps)}_2^2
    =\frac12\norm{b^\eps}_2^2
    =\norm{B_{\rm in}^\eps}_2^2>0.
  \]
  Consequently,
  \[
    \norm{B^\eps(t)}_2
    =\e^{\alpha_\eps t}\norm{B_{\rm in}^\eps}_2
    \qquad\text{for every }t\geq0.
  \]
  The spectral and real-valued assertions therefore hold with
  \[
    \gamma_0=g_*,\qquad K_\eps=K_{n(\eps)}.
  \]
  The exact growth law gives
  \[
    \norm{\e^{tL_{\eps,u}}}_{\Lp^2_\sigma\to\Lp^2_\sigma}
    \geq
    \frac{\norm{B^\eps(t)}_2}{\norm{B_{\rm in}^\eps}_2}
    =\e^{\alpha_\eps t}.
  \]
  Hence $\gamma(u,\eps)\geq\alpha_\eps\geq\gamma_0$ for every
  $0<\eps\leq\eps_0$.
  Taking the lower limit as $\eps\downarrow0$ proves equation
  \eqref{eq:main-growth-conclusion}. The particle-flow assertions follow
  from Proposition \ref{prop:particle-flow}, and the ideal assertions
  follow from Corollary \ref{cor:ideal-growth}. This completes the proof.
\end{proof}

\begin{corollary}[Axial Fourier index and magnetic length]
  \label{cor:magnetic-scale}
  Let $V^\eps$ be the magnetic eigenmode from Theorem \ref{thm:main}.
  We define its $\Hs^1$-based magnetic length by
  \[
    L_B^\eps=\frac{\norm{V^\eps}_2}{\norm{\nabla V^\eps}_2}.
  \]
  Then there are constants $c,C>0$, independent of $\eps$, such that
  \[
    c\eps^{-1/2}\leq\abs{K_\eps}\leq C\eps^{-1/2},
    \qquad
    c\sqrt\eps\leq L_B^\eps\leq C\sqrt\eps
    \qquad(0<\eps\leq\eps_0).
  \]
  The real datum $B_{\rm in}^\eps$ has the same magnetic length.
\end{corollary}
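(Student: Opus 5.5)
The bounds on $K_\eps$ come directly from the selection of the cell. By construction $K_\eps=K_{n(\eps)}$ with $\delta_n=\eps/\ell_n^2=\eps K_n^2\in I=[\delta_-,\delta_+]$. Hence
\[
  \sqrt{\delta_-}\,\eps^{-1/2}\leq K_\eps\leq\sqrt{\delta_+}\,\eps^{-1/2},
\]
which gives the first pair of inequalities with $c=\sqrt{\delta_-}$ and $C=\sqrt{\delta_+}$.

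For the magnetic length, I will write $V^\eps=\e^{2\pi iK x_3}(h,b_3)$ with $K=K_\eps$ and $k=2\pi K$. By Parseval in $x_3$,
\[
  \norm{\nabla V^\eps}_2^2=\norm{\nabla_\perp b}_2^2+k^2\norm b_2^2 .
\]
\emph{Lower bound on $\nabla V^\eps$.} Dropping the horizontal gradient gives $\norm{\nabla V^\eps}_2\geq k\norm{V^\eps}_2$. Therefore $L_B^\eps\leq (2\pi K)^{-1}\leq C\sqrt\eps$ by the first part.

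\emph{Upper bound on $\nabla V^\eps$.} It suffices to show $\norm{\nabla V^\eps}_2\leq C\eps^{-1/2}\norm{V^\eps}_2$. I will use the energy identity for the full eigenvalue equation $L_{\eps,u}V=\lambda V$. Pairing with $V$ and taking real parts, the transport term drops since $\Div u=0$. This gives
\[
  \re\lambda\,\norm V_2^2+\eps\norm{\nabla V}_2^2
  =\re\inner{(V\cdot\nabla)u}{V}
  \leq\norm{\nabla u}_\infty\norm V_2^2 .
\]
Since $\re\lambda_\eps\geq g_*>0$, it follows that $\eps\norm{\nabla V}_2^2\leq\norm{\nabla u}_\infty\norm V_2^2$. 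Hence $L_B^\eps\geq\norm{\nabla u}_\infty^{-1/2}\sqrt\eps$. This uses only Lemma \ref{lem:axial-recovery}, namely $V\in\Hs^2$, so the pairing is justified by integration by parts on $\T^3$. It is cleaner than bounding $b_3=-\Div_\perp h/(ik)$ separately.

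For the real datum $B_{\rm in}^\eps=\re V^\eps$, Lemma \ref{lem:real-axial-mode} applied to $b$ gives $\norm{B_{\rm in}^\eps}_2^2=\tfrac12\norm{b}_2^2=\tfrac12\norm{V^\eps}_2^2$. The derivatives of $\re V^\eps$ are real parts of single-mode fields: $\partial_j\re V=\re(\e^{2\pi iKx_3}\partial_jb)$ for $j=1,2$, and $\partial_3\re V=\re(\e^{2\pi iKx_3}ikb)$. The same lemma applied to each of these gives $\norm{\nabla B_{\rm in}^\eps}_2^2=\tfrac12\norm{\nabla V^\eps}_2^2$. The factors $\tfrac12$ cancel in the quotient, so the magnetic lengths coincide exactly.

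The only step needing care is the energy identity. It requires $\Div u=0$ in distributions, so that $\re\inner{(u\cdot\nabla)V}{V}=0$ for Lipschitz $u$ and $V\in\Hs^1$. This follows from Lemma \ref{lem:global-velocity} and is routine.
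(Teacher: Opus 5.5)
Your proof is correct and follows the paper's argument essentially step by step. The steps are: the bounds on $K_\eps$ from $\delta_n=\eps K_n^2\in I$; the upper bound on $L_B^\eps$ from $\partial_3V^\eps=2\pi iK_nV^\eps$; the lower bound from the real part of the energy identity with $\re\lambda_\eps>0$; and Lemma~\ref{lem:real-axial-mode} applied to $V^\eps$ and its derivatives. The one detail the paper states explicitly and you leave implicit is that $\norm{\nabla u}_\infty>0$, which follows from the same identity because $V^\eps\neq0$ and $\re\lambda_\eps>0$.
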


\begin{proof}
  We first recall that
  \[
    \ell_n=\sqrt{\frac{\eps}{\delta_n}},
    \qquad K_n\ell_n=1,
    \qquad\delta_n\in[\delta_-,\delta_+].
  \]
  Hence $K_\eps=\sqrt{\delta_n/\eps}$, which proves the first
  estimate. Since $\partial_3V^\eps=2\pi iK_nV^\eps$, we also have
  \[
    L_B^\eps\leq\frac1{2\pi K_n}
    =\frac{\ell_n}{2\pi}
    \leq\frac{\sqrt\eps}{2\pi\sqrt{\delta_-}}.
  \]

  The lower bound has a different origin from the imposed axial
  oscillation: bounded stretching must supply the diffusive loss of a
  growing mode. Thus growth itself rules out a magnetic length much
  smaller than $\sqrt\eps$.

  To prove the lower bound, we set
  \[
    M_u=\mathop{\mathrm{ess\,sup}}_{x\in\T^3}
    \norm{\nabla u(x)}_{\mathrm{op}}.
  \]
  Testing the magnetic eigenvalue equation with $V^\eps$, taking real
  parts and using incompressibility, we obtain
  \begin{align*}
    \eps\norm{\nabla V^\eps}_2^2
     & \leq\re\int_{\T^3}(V^\eps\cdot\nabla)u\cdot
    \overline{V^\eps}\dd x                         \\
     & \leq M_u\norm{V^\eps}_2^2.
  \end{align*}
  Here we used $\re\lambda_\eps>0$. The same identity implies
  $M_u>0$. Consequently,
  \[
    L_B^\eps\geq\sqrt{\frac{\eps}{M_u}}.
  \]
  Finally, each spatial derivative of $V^\eps$ lies in the same non-zero
  axial Fourier mode. Applying Lemma \ref{lem:real-axial-mode} to
  $V^\eps$ and its derivatives gives
  \[
    \norm{B_{\rm in}^\eps}_2^2=\frac12\norm{V^\eps}_2^2,
    \qquad
    \norm{\nabla B_{\rm in}^\eps}_2^2
    =\frac12\norm{\nabla V^\eps}_2^2.
  \]
  Thus $B_{\rm in}^\eps$ has magnetic length $L_B^\eps$. This completes
  the proof.
\end{proof}

\begin{remark}
  The relation $L_B^\eps\asymp\sqrt\eps$ agrees with the classical
  resistive scale discussed in \cite[Section~3]{MoffattProctor85}. Their helicity
  argument yields a conditional global length constraint, with a length
  definition different from $L_B^\eps$. We impose no helicity hypothesis.
\end{remark}

\appendix

\section{The semigroup and its spectral growth}
\label{sec:semigroup-growth}

We justify the semigroup formulation and the reduction of the growth
rate to the spectral bound used in the introduction.

\begin{lemma}[Semigroup generation and spectral growth]
  \label{lem:semigroup-spectral-growth}
  Let $u\in\W^{1,\infty}(\T^3;\R^3) $ be divergence-free and let
  $\eps>0 $. The operator $L_{\eps,u} $, with domain
  $\Hs^2\cap\Lp^2_\sigma $, generates an analytic $\C_0 $-semigroup on
  $\Lp^2_\sigma $.

  The resolvent of $L_{\eps,u}$ is compact,
  $\e^{tL_{\eps,u}} $ is compact for every $t>0 $, and
  \begin{equation}
    \spec(\e^{tL_{\eps,u}})\setminus\{0\}
    =
    \left\{\e^{t\lambda}:\lambda\in\spec(L_{\eps,u})\right\},
    \qquad t>0.
    \label{eq:compact-semigroup-spectral-mapping}
  \end{equation}
  Consequently,
  \begin{equation}
    \gamma(u,\eps)
    =
    \sup\{\re\lambda:\lambda\in\spec(L_{\eps,u})\},
    \label{eq:growth-rate-spectral-bound}
  \end{equation}
  and the supremum is attained.
\end{lemma}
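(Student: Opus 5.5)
The plan is to realise $-L_{\eps,u}$ as the operator associated with a closed sectorial form on the divergence-free subspace, and then to use compactness to pass from the spectrum to the growth rate. On $V_\sigma=\Hs^1(\T^3;\CC^3)\cap\Lp^2_\sigma$, I will define
\[
  \mathfrak a(B,\phi)=\eps\inner{\nabla B}{\nabla\phi}
  +\inner{(u\cdot\nabla)B}{\phi}-\inner{(B\cdot\nabla)u}{\phi}.
\]
Since $u\in\W^{1,\infty}$ and $\Div u=0$, the transport term has zero real part. Hence $\re\mathfrak a(B,B)\geq\eps\norm{\nabla B}_2^2-\norm{\nabla u}_\infty\norm B_2^2$, and the imaginary part is bounded by $C\norm B_{\Hs^1}\norm B_2$. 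So the form is closed and sectorial, and Kato's representation theorem gives an analytic semigroup on $\Lp^2_\sigma$. The Leray projection $\mathbb P$ is a Fourier multiplier on the torus and commutes with $\Delta$. We have $\curl(u\times B)\in\Lp^2$ and it is divergence-free for $B\in\Hs^1$, so no pressure term arises.

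The step that needs care is identifying the form operator with $L_{\eps,u}$ on $\Hs^2\cap\Lp^2_\sigma$. One inclusion is integration by parts. For the other, take $B$ in the form domain with $\mathfrak a(B,\phi)=-\inner f\phi$ for all $\phi\in V_\sigma$. Then $\eps\Delta B+\curl(u\times B)-f$ annihilates divergence-free test fields, so it is a gradient $\nabla p$. Taking the divergence shows $\Delta p=0$, so $p$ is constant and the gradient vanishes. Thus $\eps\Delta B\in\Lp^2$, and periodic elliptic regularity gives $B\in\Hs^2$, as in Lemma~\ref{lem:axial-recovery}.

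Next, compactness. The domain $\Hs^2\cap\Lp^2_\sigma$ embeds compactly into $\Lp^2_\sigma$, so the resolvent is compact. An analytic semigroup is immediately norm continuous, and a norm continuous semigroup with compact resolvent is compact for $t>0$ \cite{EngelNagel}. For eventually norm continuous semigroups the spectral mapping theorem gives \eqref{eq:compact-semigroup-spectral-mapping}. This yields $r(\e^{tL_{\eps,u}})=\e^{ts(L_{\eps,u})}$. Combined with the formula $\gamma=t^{-1}\log r(\e^{tL_{\eps,u}})$, which is Gelfand's formula applied to $\norm{\e^{ntL_{\eps,u}}}^{1/n}$, this proves \eqref{eq:growth-rate-spectral-bound}.

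Finally, attainment of the supremum. The form estimate shows that the spectrum lies in a sector $\{\re\lambda\leq\norm{\nabla u}_\infty\}$ with bounded imaginary part for bounded real part. Since the resolvent is compact, the spectrum consists of isolated eigenvalues with no finite accumulation point. So only finitely many eigenvalues have real part at least $s-1$. The spectrum is nonempty by Lemma~\ref{lem:torus-sector-basic}-type Fredholm arguments (for instance at $u$-independent modes, or from the main theorem for small $\eps$); in general it is nonempty because the compact semigroup is not quasinilpotent, as its generator is associated with a sectorial form. With finitely many candidates, the maximum of the real parts is attained.

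I expect the main obstacle to be the domain identification, specifically ruling out a pressure gradient. This is settled by the periodic Leray decomposition together with the divergence-free structure of $\curl(u\times B)$.
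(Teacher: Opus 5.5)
The generation, domain identification, compactness and spectral-mapping steps are sound. Your form approach, with the periodic Leray argument ruling out a pressure gradient, is a legitimate alternative to the paper's route. The paper instead treats $\curl(u\times\cdot)$ as an infinitesimally $\eps\Delta$-bounded perturbation on $\Hs^2\cap\Lp^2_\sigma$, which gives the domain directly. Likewise, deriving finiteness of the spectrum in $\{\re\lambda\geq s-1\}$ from the numerical-range sector and the compact resolvent is a valid substitute for the paper's citation of Corollary~V.3.2 in \cite{EngelNagel}.

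\textbf{The gap: non-empty spectrum.} Attainment of the supremum requires $\spec(L_{\eps,u})\neq\emptyset$; otherwise the spectral bound is $-\infty$ and nothing is attained. None of your three justifications establishes this.
\begin{enumerate}
\item ``Fredholm arguments at $u$-independent modes'' is not an argument. A constant field $c$ is not an eigenvector, since $L_{\eps,u}c=(c\cdot\nabla)u$.
\item The main theorem covers only the constructed velocity and $\eps\leq\eps_0$. The lemma is stated for every divergence-free Lipschitz $u$ and every $\eps>0$.
\item The claim that a compact semigroup whose generator comes from a sectorial form cannot be quasinilpotent is false. Let $(Vf)(x)=\int_0^xf(s)\dd s$ be the Volterra operator on $\Lp^2(0,1)$ and $0<\alpha<1$. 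Then $V^\alpha$ is compact, injective and quasinilpotent. By Kato's theorem on fractional powers of accretive operators, its numerical range lies in $\{\abs{\arg z}\leq\alpha\pi/2\}$. Hence $(V^\alpha)^{-1}$ is m-sectorial with compact resolvent and empty spectrum, and $-(V^\alpha)^{-1}$ generates an analytic semigroup that is compact and quasinilpotent.
\end{enumerate}

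\textbf{The missing idea.} It is specific to the torus. For every $B\in\Hs^2\cap\Lp^2_\sigma$, both $\eps\Delta B$ and $\curl(u\times B)$ are derivatives of periodic fields, so $\int_{\T^3}L_{\eps,u}B\dd x=0$. The non-zero constant fields belong to $\Lp^2_\sigma$ but not to the range. Hence $L_{\eps,u}$ is not surjective and $0\in\spec(L_{\eps,u})$. This gives $s(L_{\eps,u})\geq0$. Your finiteness argument in the half-plane $\{\re\lambda\geq-1\}$ then yields attainment, which is exactly how the paper concludes.
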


\begin{proof}
  We work on $\Lp^2_\sigma$. The operator $\eps\Delta$, with
  domain $\Hs^2\cap\Lp^2_\sigma$, generates an analytic semigroup
  on this space by its Fourier representation. Moreover,
  $C_uB=\curl(u\times B)$ belongs to $\Lp^2_\sigma$ for every
  $B\in\Hs^2\cap\Lp^2_\sigma$. For every $\eta>0$, interpolation
  gives
  \[
    \norm{C_uB}_2
    \leq C\norm u_{\W^{1,\infty}}\norm B_{\Hs^1}
    \leq\eta\norm{\Delta B}_2+C_\eta\norm B_2.
  \]
  Thus $C_u$ is infinitesimally bounded relative to $\eps\Delta$
  on $\Lp^2_\sigma$. Applying
  \cite[Theorem~III.2.10]{EngelNagel}, we deduce that $L_{\eps,u}$
  generates an analytic semigroup with the stated domain.

  By the preceding relative bound and the elliptic estimate
  $\norm B_{\Hs^2}\leq C(\norm{\Delta B}_2+\norm B_2) $, the graph norm of
  $L_{\eps,u} $ is equivalent to the $\Hs^2 $-norm.
  Since $\Hs^2(\T^3)\hookrightarrow\Lp^2(\T^3) $ is compact,
  $L_{\eps,u} $ has compact resolvent. Analyticity and
  \cite[Theorem~II.4.29]{EngelNagel} make the semigroup immediately compact,
  while \cite[Corollaries~IV.3.11 and~IV.3.12]{EngelNagel} give spectral mapping and
  equality of the growth and spectral bounds. The limit in
  \eqref{eq:semigroup-growth-rate} is this growth bound, which proves
  \eqref{eq:growth-rate-spectral-bound}. It remains to prove
  attainment. For every $B\in D(L_{\eps,u})$, we have
  \[
    \int_{\T^3}L_{\eps,u}B\dd x=0.
  \]
  Since $\Lp^2_\sigma$ contains non-zero constant fields,
  $L_{\eps,u}$ is not surjective. Thus $0\in\spec(L_{\eps,u})$.
  By \cite[Corollary~V.3.2]{EngelNagel}, only finitely many spectral
  points lie in any fixed right half-plane. Applying this to the
  half-plane $\{\re z\geq0\}$, we conclude that the spectral
  supremum is attained. This proves
  \eqref{eq:compact-semigroup-spectral-mapping}--
  \eqref{eq:growth-rate-spectral-bound}.
\end{proof}

\vfill


\begin{thebibliography}{99}

  \bibitem{ACM14}
  G. Alberti, G. Crippa and A.~L. Mazzucato,
  \emph{Exponential self-similar mixing and loss of regularity for
    continuity equations},
  C. R. Math. Acad. Sci. Paris \textbf{352} (2014), no.~11, 901--906,
  \href{https://doi.org/10.1016/j.crma.2014.08.021}
  {doi:10.1016/j.crma.2014.08.021}.

  \bibitem{ACM19}
  G. Alberti, G. Crippa and A.~L. Mazzucato,
  \emph{Loss of regularity for the continuity equation with non-Lipschitz
    velocity field},
  Ann. PDE \textbf{5} (2019), no.~1, Paper No.~9, 19~pp.,
  \href{https://doi.org/10.1007/s40818-019-0066-3}
  {doi:10.1007/s40818-019-0066-3}.

  \bibitem{AH15}
  Y. Almog and B. Helffer,
  \emph{On the spectrum of non-selfadjoint Schr\"odinger operators with
    compact resolvent},
  Comm. Partial Differential Equations \textbf{40} (2015), no.~8, 1441--1466,
  \href{https://doi.org/10.1080/03605302.2015.1025978}
  {doi:10.1080/03605302.2015.1025978}.

  \bibitem{AmbrosioCrippa14}
  L. Ambrosio and G. Crippa,
  \emph{Continuity equations and ODE flows with non-smooth velocity},
  Proc. Roy. Soc. Edinburgh Sect. A \textbf{144} (2014), no.~6, 1191--1244,
  \href{https://doi.org/10.1017/S0308210513000085}
  {doi:10.1017/S0308210513000085}.

  \bibitem{ArmstrongVicol25}
  S. Armstrong and V. Vicol,
  \emph{Anomalous diffusion by fractal homogenization},
  Ann. PDE \textbf{11} (2025), no.~1, Paper No.~2,
  \href{https://doi.org/10.1007/s40818-024-00189-6}
  {doi:10.1007/s40818-024-00189-6}.

  \bibitem{Arnold04}
  V.~I. Arnold (ed.),
  \emph{Arnold's Problems},
  Springer--Verlag, Berlin, and PHASIS, Moscow, 2004,
  Problem~1994--28,
  \href{https://doi.org/10.1007/b138219}{doi:10.1007/b138219}.

  \bibitem{AK98}
  V.~I. Arnold and B.~A. Khesin,
  \emph{Topological Methods in Hydrodynamics},
  Applied Mathematical Sciences, vol.~125, Springer, 1998,
  \href{https://doi.org/10.1007/b97593}{doi:10.1007/b97593}.

  \bibitem{AZRS81}
  V.~I. Arnol'd, Ya.~B. Zel'dovich, A.~A. Ruzmaikin and D.~D. Sokolov,
  \emph{A magnetic field in a stationary flow with stretching in Riemannian
    space},
  Sov. Phys. JETP \textbf{54} (1981), no.~6, 1083--1086,
  \href{https://jetp.ras.ru/cgi-bin/dn/e_054_06_1083.pdf}
  {English translation}.

  \bibitem{BaylyChildress88}
  B.~J. Bayly and S. Childress,
  \emph{Construction of fast dynamos using unsteady flows and maps in three
    dimensions},
  Geophys. Astrophys. Fluid Dyn. \textbf{44} (1988), no.~1--4, 211--240,
  \href{https://doi.org/10.1080/03091928808208887}
  {doi:10.1080/03091928808208887}.

  \bibitem{Bogli17}
  S. B\"ogli,
  \emph{Schr\"odinger operator with non-zero accumulation points of
    complex eigenvalues},
  Comm. Math. Phys. \textbf{352} (2017), no.~2, 629--639,
  \href{https://doi.org/10.1007/s00220-016-2806-5}
  {doi:10.1007/s00220-016-2806-5}.

  \bibitem{CL97}
  C. Chicone and Y. Latushkin,
  \emph{The geodesic flow generates a fast dynamo: an elementary proof},
  Proc. Amer. Math. Soc. \textbf{125} (1997), no.~11, 3391--3396,
  \href{https://doi.org/10.1090/S0002-9939-97-04187-7}
  {doi:10.1090/S0002-9939-97-04187-7}.

  \bibitem{ChildressGilbert95}
  S. Childress and A.~D. Gilbert,
  \emph{Stretch, Twist, Fold: The Fast Dynamo},
  Lecture Notes in Physics Monographs, vol.~37, Springer, 1995,
  \href{https://doi.org/10.1007/978-3-540-44778-8}
  {doi:10.1007/978-3-540-44778-8}.

  \bibitem{CNF24}
  M. Coti Zelati and V. Navarro-Fern\'andez,
  \emph{Three-dimensional exponential mixing and ideal kinematic dynamo
    with randomized ABC flows},
  J. Dynam. Differential Equations (2026),
  \href{https://doi.org/10.1007/s10884-026-10483-5}
  {doi:10.1007/s10884-026-10483-5}.

  \bibitem{CSV25}
  M. Coti Zelati, M. Sorella and D. Villringer,
  \emph{Alpha-unstable flows and the fast dynamo problem},
  \href{https://arxiv.org/abs/2504.00855v1}{arXiv:2504.00855v1}, 2025.

  \bibitem{CSV26}
  M. Coti Zelati, M. Sorella and D. Villringer,
  \emph{A fast dynamo on the three-torus},
  \href{https://arxiv.org/abs/2603.09861v2}{arXiv:2603.09861v2}, 2026.

  \bibitem{CSV26autonomous}
  M. Coti Zelati, M. Sorella and D. Villringer,
  \emph{Smooth autonomous fast dynamo action on the three-torus},
  \href{https://arxiv.org/abs/2609.04153v1}{arXiv:2609.04153v1}, 2026.

  \bibitem{Cuenin22}
  J.-C. Cuenin,
  \emph{Schr\"odinger operators with complex sparse potentials},
  Comm. Math. Phys. \textbf{392} (2022), no.~3, 951--992,
  \href{https://doi.org/10.1007/s00220-022-04358-1}
  {doi:10.1007/s00220-022-04358-1}.

  \bibitem{DatchevVasy12}
  K. Datchev and A. Vasy,
  \emph{Gluing semiclassical resolvent estimates via propagation of
    singularities},
  Int. Math. Res. Not. IMRN (2012), no.~23, 5409--5443,
  \href{https://doi.org/10.1093/imrn/rnr255}
  {doi:10.1093/imrn/rnr255}.

  \bibitem{DiPernaLions89}
  R.~J. DiPerna and P.-L. Lions,
  \emph{Ordinary differential equations, transport theory and Sobolev spaces},
  Invent. Math. \textbf{98} (1989), no.~3, 511--547,
  \href{https://doi.org/10.1007/BF01393835}
  {doi:10.1007/BF01393835}.

  \bibitem{EngelNagel}
  K.-J. Engel and R. Nagel,
  \emph{One-Parameter Semigroups for Linear Evolution Equations},
  Graduate Texts in Mathematics, vol.~194, Springer, 2000,
  \href{https://doi.org/10.1007/b97696}{doi:10.1007/b97696}.

  \bibitem{FriedlanderVishik91}
  S. Friedlander and M.~M. Vishik,
  \emph{Dynamo theory, vorticity generation, and exponential stretching},
  Chaos \textbf{1} (1991), no.~2, 198--205,
  \href{https://doi.org/10.1063/1.165829}
  {doi:10.1063/1.165829}.

  \bibitem{GKS18}
  V. Galitski, M. Kargarian and S. Syzranov,
  \emph{Dynamo effect and turbulence in hydrodynamic Weyl metals},
  Phys. Rev. Lett. \textbf{121} (2018), no.~17, Paper No.~176603,
  \href{https://doi.org/10.1103/PhysRevLett.121.176603}
  {doi:10.1103/PhysRevLett.121.176603}.

  \bibitem{GVR07}
  D. G\'erard-Varet and F. Rousset,
  \emph{Shear layer solutions of incompressible MHD and dynamo effect},
  Ann. Inst. H. Poincar\'e C Anal. Non Lin\'eaire
  \textbf{24} (2007), no.~5, 677--710,
  \href{https://doi.org/10.1016/j.anihpc.2006.04.005}
  {doi:10.1016/j.anihpc.2006.04.005}.

  \bibitem{Gilbert88}
  A.~D. Gilbert,
  \emph{Fast dynamo action in the Ponomarenko dynamo},
  Geophys. Astrophys. Fluid Dyn. \textbf{44} (1988), no.~1--4, 241--258,
  \href{https://doi.org/10.1080/03091928808208888}
  {doi:10.1080/03091928808208888}.

  \bibitem{GGK90}
  I. Gohberg, S. Goldberg and M.~A. Kaashoek,
  \emph{Classes of Linear Operators, Vol.~I},
  Operator Theory: Advances and Applications, vol.~49,
  Birkh\"auser, Basel, 1990,
  \href{https://doi.org/10.1007/978-3-0348-7509-7}
  {doi:10.1007/978-3-0348-7509-7}.

  \bibitem{HasselblattKatok02}
  B. Hasselblatt and A. Katok,
  \emph{Principal structures},
  in \emph{Handbook of Dynamical Systems}, vol.~1A,
  B. Hasselblatt and A. Katok (eds.), Elsevier, Amsterdam, 2002, 1--203,
  \href{https://doi.org/10.1016/S1874-575X(02)80003-0}
  {doi:10.1016/S1874-575X(02)80003-0}.

  \bibitem{Jabin16}
  P.-E. Jabin,
  \emph{Critical non-Sobolev regularity for continuity equations with
    rough velocity fields},
  J. Differential Equations \textbf{260} (2016), no.~5, 4739--4757,
  \href{https://doi.org/10.1016/j.jde.2015.11.028}
  {doi:10.1016/j.jde.2015.11.028}.

  \bibitem{Kato}
  T. Kato,
  \emph{Perturbation Theory for Linear Operators},
  Classics in Mathematics, Springer, 1995,
  \href{https://doi.org/10.1007/978-3-642-66282-9}
  {doi:10.1007/978-3-642-66282-9}.

  \bibitem{KlapperYoung95}
  I. Klapper and L.~S. Young,
  \emph{Rigorous bounds on the fast dynamo growth rate involving
    topological entropy},
  Comm. Math. Phys. \textbf{173} (1995), no.~3, 623--646,
  \href{https://doi.org/10.1007/BF02101659}
  {doi:10.1007/BF02101659}.

  \bibitem{KRRS17}
  D. Krej\v{c}i\v{r}\'ik, N. Raymond, J. Royer and P. Siegl,
  \emph{Non-accretive Schr\"odinger operators and exponential decay of
    their eigenfunctions},
  Israel J. Math. \textbf{221} (2017), 779--802,
  \href{https://doi.org/10.1007/s11856-017-1574-z}
  {doi:10.1007/s11856-017-1574-z}.

  \bibitem{MoffattProctor85}
  H.~K. Moffatt and M.~R.~E. Proctor,
  \emph{Topological constraints associated with fast dynamo action},
  J. Fluid Mech. \textbf{154} (1985), 493--507,
  \href{https://doi.org/10.1017/S002211208500163X}
  {doi:10.1017/S002211208500163X}.

  \bibitem{NF26}
  V. Navarro-Fern\'andez,
  \emph{Exponential growth and decay in the ideal induction equation},
  arXiv preprint arXiv:2608.05997 (2026),
  \href{https://arxiv.org/abs/2608.05997}{arXiv:2608.05997}.

  \bibitem{NV25}
  V. Navarro-Fern\'andez and D. Villringer,
  \emph{Spectral instability in the smooth Ponomarenko dynamo},
  \href{https://arxiv.org/abs/2509.19201v1}{arXiv:2509.19201v1}, 2025.

  \bibitem{DLMF}
  NIST Digital Library of Mathematical Functions,
  \emph{Bessel functions},
  \url{https://dlmf.nist.gov/10}, accessed 15 September 2026.

  \bibitem{Ponomarenko73}
  Y.~B. Ponomarenko,
  \emph{Theory of the hydromagnetic generator},
  J. Appl. Mech. Tech. Phys. \textbf{14} (1973), 775--778,
  \href{https://doi.org/10.1007/BF00853190}
  {doi:10.1007/BF00853190}.

  \bibitem{Rowan25}
  K. Rowan,
  \emph{A subsequentially fast dynamo on $\T^3 $},
  \href{https://arxiv.org/abs/2505.23936v1}{arXiv:2505.23936v1}, 2025.

  \bibitem{Rowan26random}
  K. Rowan,
  \emph{An AI-discovered smooth random fast dynamo on $\T^3$},
  \href{https://arxiv.org/abs/2608.20105v1}{arXiv:2608.20105v1}, 2026.

  \bibitem{RSS88}
  A.~A. Ruzmaikin, D.~D. Sokoloff and A.~M. Shukurov,
  \emph{Hydromagnetic screw dynamo},
  J. Fluid Mech. \textbf{197} (1988), 39--56,
  \href{https://doi.org/10.1017/S0022112088003167}
  {doi:10.1017/S0022112088003167}.

  \bibitem{SV25}
  M. Sorella and D. Villringer,
  \emph{A limsup fast dynamo on $\T^3 $},
  \href{https://arxiv.org/abs/2511.23024v2}{arXiv:2511.23024v2}, 2025.

  \bibitem{VZ72}
  S.~I. Vainshtein and Ya.~B. Zel'dovich,
  \emph{Origin of magnetic fields in astrophysics
    (turbulent ``dynamo'' mechanisms)},
  Sov. Phys. Usp. \textbf{15} (1972), no.~2, 159--172,
  \href{https://doi.org/10.1070/PU1972v015n02ABEH004960}
  {doi:10.1070/PU1972v015n02ABEH004960}.

  \bibitem{Vishik89}
  M.~M. Vishik,
  \emph{Magnetic field generation by the motion of a highly conducting fluid},
  Geophys. Astrophys. Fluid Dyn. \textbf{48} (1989), nos.~1--3, 151--167,
  \href{https://doi.org/10.1080/03091928908219531}
  {doi:10.1080/03091928908219531}.

\end{thebibliography}
\end{document}